\documentclass{amsart}
\usepackage{amsfonts}
\usepackage{bbm}
\usepackage{mathrsfs}
\usepackage{color}
\usepackage{amsmath}
\usepackage{bigdelim}
\usepackage{multirow}
\usepackage{amssymb}
\usepackage{indentfirst}
\usepackage{CJK}
\usepackage{fancyhdr}
\usepackage{amscd,amssymb,amsmath,graphicx,verbatim,xy}
\usepackage[TS1,OT1,T1]{fontenc}

\newtheorem{theorem}{Theorem}[section]
\newtheorem{lemma}[theorem]{Lemma}
\newtheorem{corollary}[theorem]{Corollary}
\newtheorem{proposition}[theorem]{Proposition}

\theoremstyle{definition}
\newtheorem{definition}[theorem]{Definition}
\newtheorem{example}[theorem]{Example}

\theoremstyle{remark}
\newtheorem{remark}[theorem]{Remark}
\numberwithin{equation}{section}

\usepackage{url}

\newcommand{\ii}{\boldsymbol{i}}
\newcommand{\jj}{\boldsymbol{j}}
\newcommand{\kk}{\boldsymbol{k}}

\makeatletter
\@namedef{subjclassname@2020}{%
	\textup{2020} Mathematics Subject Classification}
\makeatother

\begin{document}

\title[Complex Representatives, Similarity Invariants, and Iterates]{Complex Representatives, Similarity Invariants, and Iterates in $\bf{U(1,1;\mathbb{H})}$}

\author{Bingzhe Hou}
\address{Bingzhe Hou, School of Mathematics, Jilin University, 130012, Changchun, P. R. China}
\email{houbz@jlu.edu.cn}

\author{Cailing Yao}
\address{Cailing Yao, School of Medical Informatics, Changchun University of Chinese
	Medicine, 130117, Changchun, P. R. China} \email{yaocl@ccucm.edu.cn}
	
\author{Jiaxing He}
\address{School of Mathematics and Statistics, Changchun University of Science and Technology, 130022, Changchun, P. R. China}\email{547337872@qq.com}

\date{}
\subjclass[2020]{15B33, 15A21, 20G20, 15A18.}
\keywords{U(1,1;$\mathbb{H}$), quaternionic M\"{o}bius transformations, similarity, complex representatives, iteration.}
\begin{abstract}
We study the similarity classes of matrices in
$U(1,1;\mathbb H)$ and the iterates of their associated quaternionic
M\"obius transformations. Building on the known elliptic case, we show
that every parabolic or loxodromic element of $U(1,1;\mathbb H)$ is
similar, via a matrix in $U(1,1;\mathbb H)$, to a complex matrix. Explicit
complex representatives and matrices realizing these similarities are constructed for the
different types. We then introduce the invariants
$\operatorname{Tr}(T),
\Delta(T),
\eta(T),
\kappa(T)$,
and prove that they form a complete system of similarity invariants for
matrices in $U(1,1;\mathbb H)$. Finally, using the complex representation
of quaternionic matrices and Chebyshev polynomials, we derive a
fourth-order recurrence for a real sequence $(q_n)$ satisfying $\Delta(T^n)=q_n^2\Delta(T)$.
Together with recurrence relations for the relevant matrix entries, this
identity yields a complete classification of the quaternionic M\"obius
transformation associated with $T^n$, including the degenerate cases in
which $\Delta(T^n)=0$.
\end{abstract}
\maketitle

\section{Introduction and preliminaries}


The discovery of quaternions by the Irish mathematician W. Hamilton on October 16, 1843, marked a pivotal moment in the history of algebra and geometry. As a profound four-dimensional generalization of complex numbers, quaternions not only extend the real and imaginary components of complex numbers but also form a noncommutative associative division algebra, retaining associativity and distributivity while abandoning commutativity of multiplication. Today, quaternions are indispensable for the robust and efficient representation of three-dimensional rotations, with important applications in computer graphics and animation, particularly in camera movement and character rigging \cite{64,24}; robotics, for calculating spatial orientations \cite{23}; video games and virtual reality \cite{26}; and aerospace engineering \cite{22}. Beyond these practical applications, they also play an important role in theoretical physics, particularly in the mathematical formalism of quantum mechanics \cite{5}.

We first recall some basic definitions about quaternions. Let $\mathbb{H}$ denote the set of all quaternions. Elements of $\mathbb{H}$ have the form $z=z_{0}+z_{1}\boldsymbol{i}+z_{2}\boldsymbol{j}+z_{3}\boldsymbol{k}$ where $z_{i}\in $ $\mathbb{R}$ and $\boldsymbol{i}^{2}=\boldsymbol{j}^{2}=\boldsymbol{k}^{2}=\boldsymbol{ijk}=-1$. Let $\overline{z}=z_{0}-z_{1}\boldsymbol{i}-z_{2}\boldsymbol{j}-z_{3}\boldsymbol{k}$ be the conjugate of $z$, and define the modulus of $z$ to be:
\begin{center}
	$\mid z\mid=\sqrt{\overline{z}z}=\sqrt{z_{0}^{2}+z_{1}^{2}+z_{2}^{2}+z_{3}^{2}}$.
\end{center}
$\text{Re}(z)=z_{0}$ and $\text{Im}(z)=z_{1}\boldsymbol{i}+z_{2}\boldsymbol{j}+z_{3}\boldsymbol{k}$ denote the real and imaginary parts of $z$, respectively. Denote by $\mathbb{B}$ the unit ball of $\mathbb{H}$. That is, $\mathbb{B}=\{z\in\mathbb{H}:|z|^{2}<1\}$. Two quaternions $z$ and $\omega$ are similar if there exists a non-zero $q\in $ $\mathbb{H}$ such that $z=q\omega q^{-1}$. Quaternions $z$ and $\omega$ are similar if and only if $\mid z\mid=\mid \omega\mid$ and $z_{0}=\omega_{0}$. A quaternionic fractional linear transformation is of the form
$$
h(z)=(az+b)(cz+d)^{-1},\ a,b,c,d\in\mathbb{H}.
$$
Define the Trace of a $n\times n$ quaternionic matrix $A=[a_{ij}]$ \cite{DB2008} to be
$$
Tr(A)=\sum_{i=1}^{n}(a_{ii}+\overline{a_{ii}})=2\sum_{i=1}^{n}Re(a_{ii}).
$$
It is proved in \cite{WJX04} that the quaternionic M\"{o}bius transformation $g(z)=(az+b)(cz+d)^{-1}$ maps the unit ball $\mathbb{B}\subset \mathbb{H}$ to itself if and only if
\begin{equation}\label{1.1}
	\mid a\mid=\mid d\mid,\ \mid b\mid=\mid c\mid,\ \mid a\mid^{2}-\mid c\mid^{2}=1,\ \overline{a}b=\overline{c}d,\ a\overline{c}=b\overline{d}.
\end{equation}
Next we introduce $U(1,1;\mathbb{H})$ briefly, which is  called $Sp(1,1)$ sometimes, more details can be seen in  \cite{CWS2016}, \cite{WJX04}, \cite{IKJRP2003}.

Let $\mathbb{H}^{1,1}$ be the right vector space of dimension $2$ over $\mathbb{H}$ with the unitary structure defined by the Hermitian form
$$
\langle \boldsymbol{z},\boldsymbol{w} \rangle_{J}=\boldsymbol{w}^{\ast}J\boldsymbol{z}=\overline{w_{1}}z_{1}-\overline{w_{2}}z_{2},
$$
where $\boldsymbol{z}$ and $\boldsymbol{w}$ are the column vectors in $\mathbb{H}^{1,1}$ with entries $(z_{1},z_{2})^{T}$ and $(w_{1},w_{2})^{T}$ respectively, $\boldsymbol{w}^{\ast}$ denotes the conjugate transpose of $\boldsymbol{w}$ and $J$ is the Hermitian matrix $J=\begin{bmatrix}
	1 & 0 \\
	0 & -1
\end{bmatrix}$.
We define a $J$-unitary transformation $g$ to be an automorphism of $\mathbb{H}^{1,1}$, that is, a linear bijection such that $\langle g\boldsymbol{z},g\boldsymbol{w} \rangle=\langle \boldsymbol{z},\boldsymbol{w} \rangle$ for all $\boldsymbol{z}$ and $\boldsymbol{w}$ in $\mathbb{H}^{1,1}$. Denote the group of all unitary transformations by $U(1,1;\mathbb{H})$.

Let $T=\begin{bmatrix}
	a & b \\
	c & d
\end{bmatrix}\in U(1,1;\mathbb{H})$, the map
$$
\psi:U(1,1;\mathbb{H})\rightarrow M
$$
$$
\psi(T)=(az+b)(cz+d)^{-1}=g(z)
$$
is a group homomorphism from $U(1,1;\mathbb{H})$ to the group M of all quaternionic M\"{o}bius transformations. It is worth noticing that for each $T=\begin{bmatrix}
	a & b \\
	c & d
\end{bmatrix}\in U(1,1;\mathbb{H})$, the inverse of $T$ always has the form $T^{-1}=\begin{bmatrix}
	\overline{a} & -\overline{c} \\
	-\overline{b} & \overline{d}
\end{bmatrix}\in U(1,1;\mathbb{H})$ and $\det_{\mathbb{H}}(T)=1$, where  \cite{BC09}
$$
\text{det}_{\mathbb{H}}(T)=\sqrt{\mid a\mid^{2}\mid d\mid^{2}+\mid c\mid^{2}\mid b\mid^{2}-2Re(c\overline{a}b\overline{d}) }.
$$

It can be seen that for $T\in U(1,1;\mathbb{H})$, the equality $T^{\ast}JT=J$ always holds. Moreover, for $\boldsymbol{z}=(z_{1},z_{2})^{T}\in \mathbb{H}^{1,1}$,
$$
\boldsymbol{z}^{\ast}J\boldsymbol{z}=|z_{1}|^{2}-|z_{2}|^{2}.
$$
Consequently, a nonzero vector may also satisfy $\boldsymbol{z}^{\ast}J\boldsymbol{z}=0$. Such vectors are referred to as $J$-zero vectors or isotropic vectors. They are exactly the vectors that represent the boundary points of quaternionic hyperbolic lines.
It should be noted that
$$
\langle \boldsymbol{z}\alpha,\boldsymbol{w}\beta\rangle_{J}=\overline{\beta}\langle \boldsymbol{z},\boldsymbol{w}\rangle_{J}\alpha
$$
for any $\alpha,\beta\in\mathbb{H}$.

For $T=\begin{bmatrix}
	a & b \\
	c & d
\end{bmatrix}\in U(1,1;\mathbb{H})$, if $b\neq \overline{c}$ and $b\neq 0$, it follows from \cite{WJX04} that
\begin{equation}\label{1.2}
	\triangle=\mid \text{Im}(\overline{c}^{-1}b\overline{d}+d)\mid^{2}-\mid\overline{c}^{-1}b-1\mid^{2},
\end{equation}
$\triangle$ plays a crucial part in \cite{WJX04}. C. Yao et al. \cite{YHF} extended the definition of $\triangle$ to all cases and denoted the extension by  $\triangle(T)$:
$$
\triangle(T)=|b-\overline{c}|^{2}-(a_{0}-d_{0})^{2}.
$$
Denote by $M_{n}(\mathbb{H})$ the set of all $n\times n$ matrices of quaternions. We next present some related definitions for $A\in M_{n}(\mathbb{H})$.
\begin{definition}[\cite{Huang}]
	Let $A\in M_{n}(\mathbb{H})$. If there exists a quaternion $\lambda$ and a nonzero vector $\boldsymbol{v}$ such that
	$$
	A\boldsymbol{v}=\boldsymbol{v}\lambda,
	$$
	then $\lambda$ is a right eigenvalue of $A$ and $\boldsymbol{v}$ is a right eigenvector of $A$ corresponding to $\lambda$.
\end{definition}

Generalized eigenvalues and generalized eigenvectors are two widely used concepts in the theory of complex matrices \cite{we}. To address the similarity problem in the compound parabolic case, we hereby give the definition of generalized right eigenvectors for matrices in $U(1,1;\mathbb{H})$.

\begin{definition}\label{3.1}
	Suppose that $A\in U(1,1;\mathbb{H})$. Let $\boldsymbol{v}\neq \boldsymbol{0}$ be in $\mathbb{H}^{1,1}$ and $A\boldsymbol{v}=\boldsymbol{v}\lambda$ for some $\lambda\in\mathbb{H}$. If there exists a vector $\boldsymbol{w}\in\mathbb{H}^{1,1}$ which is right linearly independent of $\boldsymbol{v}$ such that
	$$
	A\boldsymbol{w}=\boldsymbol{w}\lambda+\boldsymbol{v},
	$$
	then we say $\boldsymbol{w}$ is a generalized right eigenvector of order $2$. More generally, let $\alpha\in\mathbb H\setminus\{0\}$ satisfy
	$\alpha\lambda=\lambda\alpha$. If
	\[
	A\boldsymbol{v}=\boldsymbol{v}\lambda,\qquad A\boldsymbol{w}=\boldsymbol{w}\lambda+\boldsymbol{v}\alpha,
	\]
	then we call $(\boldsymbol{v},\boldsymbol{w})$ a right $J$-chain with chain coefficient
	$\alpha$. It is called standard if $\alpha=1$. It is called
	$J$-normalized if
	\[
	\boldsymbol{v}^*J\boldsymbol{v}=\boldsymbol{w}^*J\boldsymbol{w}=0,\qquad \boldsymbol{v}^*J\boldsymbol{w}=1.
	\]
\end{definition}
For $A\in U(1,1;\mathbb{H})$, let $\lambda,\boldsymbol{v},\boldsymbol{w}$ be as defined in Definition \ref{3.1}. For a standard right $J$-chain, define a real-linear map
$$
\mathcal{N}_{\lambda}(\boldsymbol{x})=A\boldsymbol{x}-\boldsymbol{x}\lambda.
$$
Then $\mathcal{N}_{\lambda}(\boldsymbol{v})=\boldsymbol{0}$ and $\mathcal{N}_{\lambda}(\boldsymbol{w})=\boldsymbol{v}\neq \boldsymbol{0}$. Furthermore, we have
$$
\mathcal{N}^{2}_{\lambda}(\boldsymbol{w})=\mathcal{N}_{\lambda}(\boldsymbol{v})=\boldsymbol{0}.
$$
Thus, a nontrivial generalized right eigenvector exists if and only if
$$
\ker \mathcal{N}_{\lambda}\subsetneq \ker \mathcal{N}^{2}_{\lambda}.
$$

W. Cao et al. \cite{WJX04} classify M\"{o}bius transformations $g(z)=(az+b)(cz+d)^{-1}$ into six types.
\begin{enumerate}
	\item If $b=\overline{c}=0$, then $g$ is elliptic and 
	\begin{enumerate}
		\item if $a_{0}=d_{0}$, then $g$ is simple elliptic;
		\item if $a_{0}\neq d_{0}$, then $g$ is compound elliptic.
	\end{enumerate}
	\item If $b=\overline{c}\neq 0$, then
	\begin{enumerate}
		\item if $d_{0}^{2}<1$, then $g$ is simple elliptic;
		\item if $d_{0}^{2}=1$, then $g$ is simple parabolic;
		\item if $d_{0}^{2}>1$, then $g$ is simple loxodromic.
	\end{enumerate}
	\item If $b\neq\overline{c}$, then
	\begin{enumerate}
		\item if $\triangle<0$, then $g$ is compound elliptic;
		\item if $\triangle=0$, then $g$ is compound parabolic;
		\item if $\triangle>0$, then $g$ is compound loxodromic.
	\end{enumerate}
\end{enumerate}

They also classify the types of M\"{o}bius transformations $g$ by the fixed points of $T=\psi^{-1}(g)$ and right eigenvalues of $T$. 

A non-trivial quaternionic M\"{o}bius transformation $g$ preserving $\mathbb{B}$ is:
\begin{enumerate}
	\item simple elliptic if $T$ has right eigenvalues $t$ with $\mid t\mid=1$ and a two-dimensional family of fixed points;
	\item compound elliptic if $T$ has right eigenvalues $t$ with $\mid t\mid=1$ and a unique fixed point in $\mathbb{B}$;
	\item simple parabolic if $T$ has the repeated right eigenvalue $t=\pm 1$;
	\item compound parabolic if $T$ has a repeated right eigenvalue $t\neq\pm 1$ with $\mid t\mid=1$ and a fixed point on $\partial\mathbb{B}$;
	\item simple loxodromic if $T$ has the real right eigenvalues $t$ and $1/t$ where $t\neq \pm 1$;
	\item compound loxodromic if $T$ has non-real right eigenvalues $t_{1}$ and $t_{2}$ with $\mid t_{2}\mid=\mid t_{1}\mid^{-1}$ and $\mid t_{1}\mid\neq 1$.
\end{enumerate}

The conjugacy problem for quaternionic M\"obius transformations in the
full projective group was studied by Parker and Short
\cite{ParkerShort2009}. They obtained a conjugacy classification and, in
particular, observed that every quaternionic M\"obius transformation is
conjugate to one with complex coefficients. Accordingly, the existence
of a complex representative by itself is not claimed as new here. Our
problem is more restrictive: for \(T\in U(1,1;\mathbb H)\), we require
the conjugating matrix \(S\) itself to belong to \(U(1,1;\mathbb H)\), so
that
\[
STS^{-1}\in U(1,1;\mathbb H)\cap M_2(\mathbb C).
\]
Conjugacy in the full group does not imply this restricted statement,
since a general conjugator in \(SL(2,\mathbb H)\) need not satisfy
\(S^{*}JS=J\).

The elliptic case under this restricted notion of similarity was
established in \cite{YHF}. In Sections~2 and~3 we complete the
loxodromic and parabolic cases by constructing explicit \(J\)-unitary
conjugators and complex normal forms. The group constraint is essential
in the proofs: the loxodromic construction uses a \(J\)-normalization of
isotropic eigendirections, whereas the compound-parabolic construction
requires a \(J\)-normalized right Jordan chain. These constructions are
the additional content beyond the unrestricted existence of complex
representatives.

Quaternionic M\"obius iteration was already investigated by Heidrich
and Jank \cite{HeidrichJank1996}, who described the iterative behavior
of quaternionic M\"obius transformations and classified their main
conjugacy classes. Thus, our contribution is not the first treatment of
quaternionic iteration, but an explicit algebraic refinement for
elements of \(U(1,1;\mathbb H)\). More precisely, we derive the
palindromic fourth-order recurrence for \(T^n\) and for the real
sequence \(q_n\), prove
\[
\Delta(T^n)=q_n^2\Delta(T),
\]
and establish
\[
b_n-\overline{c_n}=q_n(b-\overline c),\qquad
\operatorname{Re}(a_n)-\operatorname{Re}(d_n)
=q_n\bigl(\operatorname{Re}(a)-\operatorname{Re}(d)\bigr).
\]
These identities lead to a six-type classification of \(g_{T^n}\).
In particular, when \(q_n=0\), and hence \(\Delta(T^n)=0\), the
additional conditions involving \(b_n\), \(c_n\), and
\(\operatorname{Tr}(T^n)\) distinguish the compound-parabolic,
simple-elliptic, simple-parabolic, and simple-loxodromic possibilities
that cannot be resolved from the sign of \(\Delta(T^n)\) alone.

This paper is organized as follows. In Section $2$ and Section $3$, we prove the complex representatives of loxodromic and parabolic elements in $U(1,1;\mathbb{H})$ separately. In Section $4$, we give the complete similarity invariants for matrices in $U(1,1;\mathbb{H})$. In Section $5$, we derive the recurrence formula of $\triangle(T^n)$ and completely classify the M\"{o}bius transformation $g_{T^n}$ corresponding to the matrix $T^n$.

\section{Complex representatives of loxodromic elements in $U(1,1;\mathbb{H})$}
In this section we prove that every matrix $T$ whose related quaternionic M\"{o}bius  transformation $\psi(T)$  is loxodromic is similar to a complex matrix in $U(1,1;\mathbb{H})$.

\begin{lemma}[\cite{FZZ1997}]\label{1}
	Two nonzero quaternions $p,q\in \mathbb{H}$ are similar if and only if their real parts and moduli are equal.
\end{lemma}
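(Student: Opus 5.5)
To prove Lemma~\ref{1}, I would treat the two implications separately.

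\emph{Necessity.} Suppose $p=uqu^{-1}$ with $u\neq 0$. The modulus is multiplicative and $|u^{-1}|=|u|^{-1}$, so $|p|=|u||q||u|^{-1}=|q|$. For the real part, I would use $\operatorname{Re}(xy)=\operatorname{Re}(yx)$ for all $x,y\in\mathbb H$. This holds by bilinearity once it is checked on the basis $1,\ii,\jj,\kk$, where $xy=\pm yx$ and both sides vanish unless $x,y$ are real multiples of the same basis element. It gives $\operatorname{Re}(uqu^{-1})=\operatorname{Re}(qu^{-1}u)=\operatorname{Re}(q)$.

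\emph{Sufficiency.} Write $p=p_0+P$ and $q=q_0+Q$ with $p_0=q_0$ and $P,Q$ purely imaginary. Equal moduli and equal real parts give $|P|=|Q|$. Since $u(q_0+Q)u^{-1}=q_0+uQu^{-1}$, it suffices to find $u\neq0$ with $uQ=Pu$.

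If $P=Q$, take $u=1$. If $P=-Q\neq 0$, take $u$ to be any nonzero pure imaginary quaternion orthogonal to $Q$. Such $u$ anticommutes with $Q$, so $uQu^{-1}=-Q=P$. Otherwise set $u=|P||Q|+PQ$.

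The main step is checking that this $u$ works. I would use the rule $X^2=-|X|^2$ for purely imaginary $X$. Then
\[
Pu=|P||Q|P+P^2Q=|P||Q|P-|P|^2Q,\qquad uQ=|P||Q|Q+PQ^2=|P||Q|Q-|Q|^2P .
\]
Since $|P|=|Q|=:r$, both expressions equal $r^2(P-Q)$, so $uQ=Pu$.

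It remains to see that $u\neq 0$. We have $\operatorname{Re}(u)=r^2-\langle P,Q\rangle$, where $\langle\cdot,\cdot\rangle$ is the Euclidean inner product on $\mathbb R^3$. By Cauchy--Schwarz, $\operatorname{Re}(u)\ge 0$, with equality only when $P=Q$. The cases $P=Q=0$ and $P=Q\neq 0$ are already covered by $u=1$, so $u\neq 0$ here. The case $P=-Q$ is also excluded, since there $u=r^2-P^2\cdot(-1)\cdot(-1)$ works out nonzero anyway, but that case was handled separately.

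So the only nontrivial point is producing an explicit conjugator and verifying that it is nonzero. Geometrically, $u$ is the rotation taking $Q$ to $P$ under the standard identification of unit quaternions with $SO(3)$.
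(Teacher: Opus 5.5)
You prove necessity correctly. Your plan for sufficiency is also sound: reduce to the imaginary parts, write down an explicit conjugator, and split off a degenerate case. The paper gives no proof of this lemma; it only quotes it. However, the step you call the main one contains a sign error, and as written your conjugator does not work. With $u=r^2+PQ$ you correctly get $Pu=r^2(P-Q)$, but
\[
uQ=r^2Q+PQ^2=r^2Q-r^2P=r^2(Q-P),
\]
so $uQ=-Pu$, and hence $uQu^{-1}=-P$, not $P$. For example, $P=\ii$, $Q=\jj$ gives $u=1+\kk$. Then $u\jj=\jj-\ii$ while $\ii u=\ii-\jj$, so $u\jj u^{-1}=-\ii$. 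Your own case $P=-Q\neq0$ already exposes the problem. There $u=r^2-P^2=2r^2$ is real and nonzero, so it fixes $Q=-P$. Yet your claimed identity $uQ=Pu$ would force $Q=P$.

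The repair is to flip the sign. Take $u=r^2-PQ$ (equivalently $r^2+\overline{P}Q$). Then
\[
Pu=r^2P-P^2Q=r^2(P+Q),\qquad uQ=r^2Q-PQ^2=r^2(P+Q),
\]
so $uQ=Pu$. For this $u$ we have $\operatorname{Re}(u)=r^2+\langle P,Q\rangle$. By Cauchy--Schwarz this vanishes exactly when $P=-Q$, which includes $P=Q=0$. So the corrected conjugator degenerates precisely in the antipodal case, and that is why $P=-Q$ must be split off. Your treatment of that case is correct: use $u=1$ when $P=Q=0$, and a nonzero pure imaginary $u$ orthogonal to $Q$ otherwise. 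Your nonvanishing paragraph, including the garbled last sentence about $P=-Q$, should be rewritten on this basis. With the correct $u$, the case $P=Q\neq0$ needs no separate treatment, since $u=2r^2$ there. With this one sign change the proof is complete.
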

By taking advantage of $\triangle(T)$, a unified expression of $\sigma_{R}(T)$ was provided in \cite{YHF}.

\begin{lemma}[\cite{YHF}]\label{3}
	Let $T=\begin{bmatrix}
		a & b \\
		c & d
	\end{bmatrix}\in U(1,1;\mathbb{H})$. Then
	$$
	\sigma_{R}(T)=\{ s\in\mathbb{H}:s_{0}=\frac{1}{2}((a_{0}+d_{0})\pm\sqrt{2X^{\prime}-2-\triangle(T)}),\ \mid s\mid^{2}=X^{\prime}\pm\sqrt{X^{\prime 2}-1} \},
	$$
	where
	$$
	X^{\prime}=\frac{(a_{0}+d_{0})^{2}+\triangle(T)+\sqrt{((a_{0}+d_{0})^{2}+\triangle(T)-4)^{2}+16\triangle(T)\cdot \left(\frac{1}{2}sgn(\triangle(T))+\frac{1}{2}\right)}}{4}
	$$
	and $sgn$ is the sign function which means for any $x\in\mathbb{R}$,
	\begin{displaymath}
		sgn(x)=\left \{ \begin{array}{ll}
			1,\ x>0,\\
			0,\ x=0,\\
			-1,\ x<0.
		\end{array} \right.
	\end{displaymath}
	
\end{lemma}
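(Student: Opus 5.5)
The plan is to reduce the right spectrum of $T$ to the ordinary spectrum of its complex representation. Write $a=\alpha_1+\alpha_2\jj$ with $\alpha_1,\alpha_2\in\mathbb C$, and similarly for $b,c,d$. Let $\chi(T)\in M_4(\mathbb C)$ be the standard complex adjoint matrix. It is standard that $\lambda\in\mathbb C$ is an eigenvalue of $\chi(T)$ exactly when $\lambda$ is a right eigenvalue of $T$. Moreover, by Lemma \ref{1}, $\sigma_R(T)$ is the union of the similarity classes $\{s: s_0=\operatorname{Re}\lambda,\ |s|=|\lambda|\}$ for $\lambda$ running over the eigenvalues of $\chi(T)$. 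So it suffices to compute the real characteristic polynomial $p(x)=\det(xI-\chi(T))$, find its roots, and read off their real parts and moduli.

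\textbf{Step 1: $p$ is palindromic.} From $T^*JT=J$ we get $T^{-1}=JT^*J$, hence $\chi(T)^{-1}$ is similar to $\chi(T)^*$. Thus the spectrum of $\chi(T)$ is closed under $\lambda\mapsto 1/\overline{\lambda}$. It is also closed under conjugation, since $p$ has real coefficients. Since $\det\chi(T)=\det_{\mathbb H}(T)^2=1$, this gives
$$p(x)=x^4-c_1x^3+c_2x^2-c_1x+1,\qquad c_1=\operatorname{Tr}\chi(T)=2(a_0+d_0).$$

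\textbf{Step 2: compute $c_2$.} I claim
$$c_2=2+(a_0+d_0)^2+\triangle(T).$$
The diagonal case $T=\operatorname{diag}(a,d)$ with $|a|=|d|=1$ confirms this: there $c_2=2+4a_0d_0$ and $\triangle(T)=-(a_0-d_0)^2$. For the general case, $c_2$ is the sum of the principal $2\times2$ minors of $\chi(T)$. I will expand them in terms of $|a|^2,|b|^2,|c|^2,|d|^2,a_0,d_0$ and $\operatorname{Re}(b c)$, and then simplify using the relations \eqref{1.1}: $|a|=|d|$, $|b|=|c|$, $|a|^2-|c|^2=1$. The target expression is $|b-\overline c|^2-(a_0-d_0)^2+(a_0+d_0)^2+2$. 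This bookkeeping is the main computational obstacle. If the direct expansion is messy, I would first conjugate $T$ by a diagonal unitary $\operatorname{diag}(u,v)$ with $|u|=|v|=1$, which preserves $U(1,1;\mathbb H)$ and $p$, to normalize $c$ to be real and nonnegative.

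\textbf{Step 3: solve the palindromic quartic.} Put $y=x+x^{-1}$. Then $x^{-2}p(x)=y^2-2(a_0+d_0)y+(a_0+d_0)^2+\triangle(T)$, so
$$y=(a_0+d_0)\pm\sqrt{-\triangle(T)}.$$
\begin{itemize}
\item If $\triangle(T)\le 0$, both $y$ are real. A root $x$ of $x^2-yx+1$ with $|y|\le2$ lies on the unit circle with $\operatorname{Re}x=y/2$. The formula for $X'$ should then give $X'=1$, because the $\operatorname{sgn}$ factor kills the $16\triangle$ term and the square root equals $|(a_0+d_0)^2+\triangle-4|$. One has to check that $|y|\le 2$ in the relevant region, that is, that the first summand inside $X'$ is at most $4$.
\item If $\triangle(T)>0$, the roots are $s,\overline s,1/\overline s,1/s$ with $|s|\ne1$. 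Here I will not use the $y$-substitution but match coefficients directly in
$$p(x)=\bigl(x^2-2s_0x+|s|^2\bigr)\bigl(x^2-2s_0|s|^{-2}x+|s|^{-2}\bigr).$$
With $X'=\tfrac12(|s|^2+|s|^{-2})$, the $x^3$ coefficient and the $x^2$ coefficient give two equations. Eliminating $s_0$ gives a quadratic in $X'$ whose positive root is the displayed expression containing $+16\triangle(T)$.
\end{itemize}
In both cases $|s|^2=X'\pm\sqrt{X'^2-1}$. The real part is then recovered as $s_0=\tfrac12\bigl((a_0+d_0)\pm\sqrt{2X'-2-\triangle(T)}\bigr)$, which one verifies by substituting back into the coefficient identities.

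Finally, I will check that the two regimes ($\operatorname{sgn}=1$ versus $\operatorname{sgn}\in\{0,-1\}$) are consistent at $\triangle(T)=0$, and then apply Lemma \ref{1} to pass from the complex eigenvalues to the full quaternionic similarity classes.
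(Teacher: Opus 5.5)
Your Steps 1 and 2 are sound. The value $c_2=2+(a_0+d_0)^2+\triangle(T)$ is exactly the coefficient $\beta$ of Lemma \ref{29}. The paper obtains it more quickly from $e_2=\frac12\bigl(\operatorname{Tr}(T)^2-\operatorname{Tr}(T^2)\bigr)$ and Lemma \ref{88}, rather than by expanding minors.

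The gap is in Step 3, in the branch $\triangle(T)\le 0$. You assert that both roots $y=(a_0+d_0)\pm\sqrt{-\triangle(T)}$ satisfy $|y|\le 2$, hence $X'=1$. This is false when $\triangle(T)=0$ and $|a_0+d_0|>2$, which is exactly the simple loxodromic case. Indeed, $b=\overline{c}\ne 0$ and $a\overline{c}=b\overline{d}$ give $a=b\overline{d}b^{-1}$, so $a_0=d_0$ and $\triangle(T)=0$, while $d_0^2>1$. For the paper's example $T=\frac14\begin{bmatrix}5&-3\jj\\3\jj&5\end{bmatrix}$ one gets $p(x)=(x-2)^2(x-\frac12)^2$ and $X'=\frac{17}{8}$. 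Your proposed check $(a_0+d_0)^2+\triangle(T)\le 4$ fails here ($\frac{25}{4}>4$), and your $\triangle(T)>0$ branch does not apply, so real right eigenvalues $x\ne\pm1$ are not covered by either branch.

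You need a separate case. If $\triangle(T)=0$ and $|a_0+d_0|>2$, the roots are $x,x,x^{-1},x^{-1}$ with $x$ real and $x+x^{-1}=a_0+d_0$. Then $X'=\frac12(x^2+x^{-2})=\frac12(a_0+d_0)^2-1$, which is what the displayed formula gives, since the square root then equals $(a_0+d_0)^2-4$.

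For $\triangle(T)<0$ the bound $|y_\pm|\le 2$ is true, but it follows neither from palindromicity nor from $y_+y_-=(a_0+d_0)^2+\triangle(T)\le 4$; consider $(x^2-3x+1)(x^2-x+1)$. The missing input is the quaternionic structure of $\chi(T)$. Its spectrum, as a multiset, is $\{\lambda_1,\overline{\lambda_1},\lambda_2,\overline{\lambda_2}\}$, so real eigenvalues have even multiplicity. If $|y_+|>2$ and $y_+\ne y_-$, the two real roots of $x^2-y_+x+1$ would be simple roots of $p$, a contradiction.

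Two smaller points. First, in the $\triangle(T)>0$ branch, eliminating $s_0$ gives $f(X')=0$ with $f(X)=2X^2-KX+(a_0+d_0)^2-\triangle(T)-2$ and $K=(a_0+d_0)^2+\triangle(T)$. Both roots of $f$ can be positive, so ``the positive root'' does not identify $X'$. The correct rule is that $X'=\frac12(|s|^2+|s|^{-2})>1$ while $f(1)=-2\triangle(T)<0$, so $X'$ is the larger root.

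Second, when $X'>1$ the two $\pm$ signs in the statement are not independent. Substituting back gives $2s_0-(a_0+d_0)=(a_0+d_0)\frac{|s|^2-1}{|s|^2+1}$. So the sign chosen in $s_0$ is $\operatorname{sgn}(a_0+d_0)$ times the sign chosen in $|s|^2$. Your verification should establish this pairing explicitly, because with independent signs the displayed set contains spurious classes; in the example above, $s_0=\frac12$ with $|s|^2=4$ is not a right eigenvalue class.
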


\begin{theorem}\label{2}
	Let $g(z)=(az+b)(cz+d)^{-1}$ be a simple loxodromic quaternionic M\"{o}bius  transformation, $T=\begin{bmatrix}
		a & b \\
		c & d
	\end{bmatrix}$, then there exists a matrix $X \in U(1,1;\mathbb{H})$ such that $XTX^{-1}$ is complex.
\end{theorem}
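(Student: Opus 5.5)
Simple loxodromic means $b=\overline{c}\neq 0$ and $d_0^2>1$. The plan is to use the two isotropic eigendirections of $T$ to build a $J$-unitary conjugator $X$. We then aim to get
\[
XTX^{-1}=\begin{bmatrix}\cosh t&\sinh t\\ \sinh t&\cosh t\end{bmatrix}
\]
up to a sign, i.e.\ a real (hence complex) matrix.

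First, use the conditions (\ref{1.1}) together with $b=\overline c$ to pin down the structure. From $|a|=|d|$, $|b|=|c|$ and $\overline a b=\overline c d=bd$ we get $a=b d b^{-1}$. Hence $a$ and $d$ are similar, so $a_0=d_0$ and $\triangle(T)=0$. Lemma \ref{3} (or the classification recalled in the introduction) then gives real right eigenvalues $t$ and $1/t$ with $t\neq\pm1$; in fact $t+1/t=2d_0$ should come out of the computation. Let $\boldsymbol v_1,\boldsymbol v_2$ be right eigenvectors with $T\boldsymbol v_1=\boldsymbol v_1 t$ and $T\boldsymbol v_2=\boldsymbol v_2 t^{-1}$. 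Because $t$ is real, any right scaling of $\boldsymbol v_i$ is again an eigenvector for the same eigenvalue.

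Next, establish the isotropy and pairing. From $\langle T\boldsymbol v_i,T\boldsymbol v_i\rangle_J=\langle \boldsymbol v_i,\boldsymbol v_i\rangle_J$ we get $t^2\langle \boldsymbol v_1,\boldsymbol v_1\rangle_J=\langle \boldsymbol v_1,\boldsymbol v_1\rangle_J$, and $t^2\neq1$ forces $\boldsymbol v_1^*J\boldsymbol v_1=0$; the same argument gives $\boldsymbol v_2^*J\boldsymbol v_2=0$. The two vectors are independent, since the eigenvalues $t$ and $1/t$ are real and distinct. A nondegenerate Hermitian form cannot vanish identically on their span, so $\boldsymbol v_2^*J\boldsymbol v_1=\mu\neq0$. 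Replacing $\boldsymbol v_1$ by $\boldsymbol v_1\mu^{-1}$ gives $\boldsymbol v_2^*J\boldsymbol v_1=1$, using the sesquilinearity rule $\langle \boldsymbol z\alpha,\boldsymbol w\beta\rangle_J=\overline\beta\langle \boldsymbol z,\boldsymbol w\rangle_J\alpha$.

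Then compare with a model pair in $\mathbb C^2$. Put $\boldsymbol e_1=\frac{1}{\sqrt2}(1,1)^T$ and $\boldsymbol e_2=\frac{1}{\sqrt2}(1,-1)^T$. These are isotropic with $\boldsymbol e_2^*J\boldsymbol e_1=1$, and they are the eigenvectors of the real hyperbolic matrix above, with eigenvalues $t$ and $t^{-1}$ (taking $\cosh s=|t+t^{-1}|/2$, and an overall sign $-I$ if $t<0$). Let $P=[\boldsymbol v_1\ \boldsymbol v_2]$ and $E=[\boldsymbol e_1\ \boldsymbol e_2]$. Both satisfy $P^*JP=E^*JE=\begin{bmatrix}0&1\\1&0\end{bmatrix}$, so $X=EP^{-1}$ satisfies $X^*JX=J$, i.e.\ $X\in U(1,1;\mathbb H)$. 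Since $TP=P\,\mathrm{diag}(t,t^{-1})$ with real diagonal, we get $XTX^{-1}=E\,\mathrm{diag}(t,t^{-1})E^{-1}$, which is the real matrix we wanted.

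The main obstacle is producing the eigenvectors explicitly and checking independence and real eigenvalues rigorously, since quaternionic right eigenvectors are subtle. To avoid this, I would write them directly: try $\boldsymbol v=(x,1)^T$ with $x$ on $\partial\mathbb B$ a fixed point of $g$. The equation becomes $ax+b=x(cx+d)$, and I would solve it using $b=\overline c$ and $a=bdb^{-1}$, perhaps after first conjugating by a diagonal $J$-unitary matrix $\mathrm{diag}(u,u)$ with $|u|=1$ to make $b$ real positive. After that normalization $a=d$, and the fixed points of $g$ on $\partial\mathbb B$ should come out as $\pm$ a unit quaternion. If this proves messy, I would fall back on the fixed-point characterization of loxodromic maps from \cite{WJX04}.
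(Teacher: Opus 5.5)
Your argument is correct, but it takes a different route from the paper. The paper uses no eigenvectors here. It conjugates by the diagonal $J$-unitary matrix $\operatorname{diag}(\overline q,1)$ with $\overline q=c/|c|$, which turns $b$ and $c$ into $|c|$ and, via $a\overline c=b\overline d$, turns $a$ into $\overline d$. A second conjugation by $\operatorname{diag}(y_1,y_1)$ rotates $d$ into $\mathbb C$, giving $\begin{bmatrix} d_0+\ii s&|c|\\ |c|&d_0-\ii s\end{bmatrix}$ with $s=\sqrt{1-d_0^2+|c|^2}$. That computation uses only $b=\overline c\neq 0$ and never $d_0^2>1$, which is why the same proof yields Theorem \ref{233} for free. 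The cost is that the normal form depends on $|c|$, so Lemma \ref{21} is needed in Section 4 to identify elements with equal trace.

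You instead run the spectral argument the paper reserves for the compound loxodromic case (Theorem \ref{4}): isotropic eigenlines, a null basis with $P^*JP=H_0$, and a real model null basis. This uses $t^2\neq1$ essentially (for isotropy and for independence), so it does not reach the parabolic case, and it needs the eigenvalue information. In return, taking $t=d_0+\sqrt{d_0^2-1}$, it produces the canonical real form $\begin{bmatrix} d_0&\sqrt{d_0^2-1}\\ \sqrt{d_0^2-1}&d_0\end{bmatrix}$. This form depends only on $\operatorname{Tr}(T)=4d_0$; it is the $s=0$ member of the paper's family. It therefore settles the simple loxodromic case of the similarity classification in Section 4 without Lemma \ref{21}.

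There are some small errors to correct.

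\textbf{The relation for $a$.} From $\overline a b=\overline c d=bd$ you get $\overline a=bdb^{-1}$, i.e.\ $a=b\overline d b^{-1}$, not $a=bdb^{-1}$. The conclusion $a_0=d_0$ is unaffected.

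\textbf{The fallback paragraph.} As stated it is wrong. Conjugation by $\operatorname{diag}(u,u)$ sends $b\mapsto ub\overline u$, which preserves $\operatorname{Re} b$, so it cannot make $b$ real positive. You need $\operatorname{diag}(u,1)$, as in the paper. After that normalization $a=\overline d$ (not $d$), and the two boundary fixed points $(t^{\pm1}-d)/|c|$ are negatives of each other only when $d$ is real.

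\textbf{A direct replacement.} You do not need the fallback at all. For real $t$ with $t^2-2d_0t+1=0$, the vector $\boldsymbol v=(\overline b^{-1}(t-d),1)^T$ satisfies $T\boldsymbol v=\boldsymbol v t$. The second row is immediate. For the first row, left-multiply by $\overline b$ and use $\overline b a\overline b^{-1}=\overline d$ and $|d|^2-|b|^2=1$; it reduces exactly to $t^2-2d_0t+1=0$. This gives both isotropic eigenvectors explicitly, without appealing to Lemma \ref{3} or the cited classification.
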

\begin{proof}
	In this case, we have $b=\overline{c}\neq 0,\ d_{0}^{2}>1$ and  $T=\begin{bmatrix}
		a & \overline{c} \\
		c & d
	\end{bmatrix}$. Let
	$$
	M=\begin{bmatrix}
		\overline{q} & 0 \\
		0 & 1
	\end{bmatrix}\in U(1,1;\mathbb{H}),
	$$
	where $\overline{q}=\frac{c}{\mid c\mid}=q^{-1}=|c|(\overline{c})^{-1}$. Then
	$$
	MTM^{-1}=\begin{bmatrix}
		\overline{d} & |c| \\
		|c| & d
	\end{bmatrix}.$$
	Furthermore, by Lemma \ref{1}, we let $y_{1}$ be the unimodulus quaternion such that
	$$
	y_{1}\overline{d}\overline{y_{1}}=d_{0}+\ii\sqrt{1-d_{0}^{2}+|c|^{2}},
	$$
	this is equivalent to
	$$
	y_{1}d\overline{y_{1}}=d_{0}-\ii\sqrt{1-d_{0}^{2}+|c|^{2}}.
	$$
	If we let $N=\begin{bmatrix}
		y_{1} & 0 \\
		0 & y_{1}
	\end{bmatrix}\in U(1,1;\mathbb{H})$ and $X=NM$, then $X\in U(1,1;\mathbb{H})$ and
	$$
	XTX^{-1}=NMTM^{-1}N^{-1}=\begin{bmatrix}
		d_{0}+\ii\sqrt{1-d_{0}^{2}+|c|^{2}} & |c| \\
		|c| & d_{0}-\ii\sqrt{1-d_{0}^{2}+|c|^{2}}
	\end{bmatrix}.
	$$
	
	It is obvious that the matrix $XTX^{-1}$ is complex, which validates our conclusion.

\end{proof}
\begin{example}
	
	Let
	\[
	T=\frac14\begin{bmatrix}5&-3\boldsymbol{j}\\3\boldsymbol{j}&5\end{bmatrix}.
	\]
	Then $b=\bar c\ne0$ and $d_0^2=25/16>1$. Moreover,
	\[
	|a|^2-|c|^2=|d|^2-|b|^2=1,
	\qquad \bar a b-\bar c d=0,
	\]
	so $T\in U(1,1;\mathbb H)$ and the corresponding M\"obius transformation $g_T$ is
	simple loxodromic. Set
	\[
	X=\begin{bmatrix}\boldsymbol{j}&0\\0&1\end{bmatrix}\in U(1,1;\mathbb H).
	\]
	A direct calculation gives
	\[
	XTX^{-1}=\begin{bmatrix}5/4&3/4\\3/4&5/4\end{bmatrix}
	=\begin{bmatrix}\cosh(\log2)&\sinh(\log2)\\
		\sinh(\log2)&\cosh(\log2)\end{bmatrix}.
	\]
	Thus $T$ is similar to a real, hence complex, matrix in $U(1,1;\mathbb H)$.
\end{example}

\begin{theorem}\label{4}
	Let $g(z)=(az+b)(cz+d)^{-1}$ be a compound loxodromic quaternionic M\"{o}bius  transformation, $T=\begin{bmatrix}
		a & b \\
		c & d
	\end{bmatrix}$, then there exists a matrix $W \in U(1,1;\mathbb{H})$ such that $WTW^{-1}$ is  complex.
\end{theorem}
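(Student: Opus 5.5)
The plan is to diagonalize $T$ by a basis of isotropic right eigenvectors, $J$-normalize that basis, and then move it to a fixed complex isotropic basis. By the classification recalled in the introduction, $T$ has non-real right eigenvalues $t_1,t_2$ with $|t_1|\neq 1$ and $|t_2|=|t_1|^{-1}$. Pick right eigenvectors $T\boldsymbol{v}_1=\boldsymbol{v}_1\lambda$ and $T\boldsymbol{v}_2=\boldsymbol{v}_2\mu$ with $\lambda$ similar to $t_1$ and $\mu$ similar to $t_2$.

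Since $\boldsymbol{v}\mapsto\boldsymbol{v}y$ replaces an eigenvalue $\lambda$ by $y^{-1}\lambda y$, Lemma \ref{1} lets us assume $\lambda\in\mathbb{C}$. From $T^*JT=J$ we get
$$\boldsymbol{v}_1^*J\boldsymbol{v}_1=\langle T\boldsymbol{v}_1,T\boldsymbol{v}_1\rangle_J=|\lambda|^2\,\boldsymbol{v}_1^*J\boldsymbol{v}_1 .$$
As $|\lambda|\neq1$, this forces $\boldsymbol{v}_1^*J\boldsymbol{v}_1=0$, and likewise $\boldsymbol{v}_2^*J\boldsymbol{v}_2=0$. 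The vectors $\boldsymbol{v}_1,\boldsymbol{v}_2$ are right linearly independent, because a right multiple $\boldsymbol{v}_1\gamma$ has eigenvalue $\gamma^{-1}\lambda\gamma$, whose modulus is $|\lambda|\neq|\mu|$.

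Next, $\beta:=\boldsymbol{v}_2^*J\boldsymbol{v}_1\neq0$. Suppose instead $\beta=0$. Then the span of $\boldsymbol{v}_1,\boldsymbol{v}_2$, which is all of $\mathbb{H}^{1,1}$, would be totally isotropic, contradicting the nondegeneracy of $J$. This is the step I expect to need the most care: one must handle independence and nondegeneracy properly in the quaternionic right-vector-space setting. An alternative route is to note directly that the isotropic vectors orthogonal to an isotropic $\boldsymbol{v}_1$ are exactly $\boldsymbol{v}_1\mathbb{H}$.

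Replace $\boldsymbol{v}_2$ by $\boldsymbol{v}_2\gamma$ with $\gamma=\overline{\beta}^{-1}$, so that $\boldsymbol{v}_2^*J\boldsymbol{v}_1=1$. The eigenvalue of $\boldsymbol{v}_2$ becomes some $\mu'$. Applying $\langle T\cdot,T\cdot\rangle_J=\langle\cdot,\cdot\rangle_J$ to the pair gives
$$\overline{\mu'}\cdot 1\cdot\lambda=1, \qquad\text{so}\qquad \mu'=\overline{\lambda}^{-1}\in\mathbb{C}.$$
Let $S=[\boldsymbol{v}_1\ \boldsymbol{v}_2]$. Then
$$TS=S\,\mathrm{diag}(\lambda,\mu'), \qquad S^*JS=K:=\begin{bmatrix}0&1\\1&0\end{bmatrix}.$$

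Finally take the complex matrix $P=\frac{1}{\sqrt2}\begin{bmatrix}1&1\\1&-1\end{bmatrix}$, which satisfies $P^*JP=K$, and set $W=PS^{-1}$. From $S^*JS=K$ we get $S^{-*}KS^{-1}=J$, and therefore
$$W^*JW=S^{-*}(P^*JP)S^{-1}=S^{-*}KS^{-1}=J,$$
so $W\in U(1,1;\mathbb{H})$. Moreover
$$WTW^{-1}=P\,\mathrm{diag}(\lambda,\overline{\lambda}^{-1})\,P^{-1},$$
which is a product of complex matrices, hence complex. As a concrete normal form, with $\lambda=re^{\ii\theta}$ this is $e^{\ii\theta}\begin{bmatrix}\cosh\log r&\sinh\log r\\ \sinh\log r&\cosh\log r\end{bmatrix}$, matching the shape found in Theorem \ref{2}.
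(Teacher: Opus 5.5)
Your proof is correct and is essentially the paper's proof. You $J$-normalize two isotropic right eigenvectors so that $S^*JS=\begin{bmatrix}0&1\\1&0\end{bmatrix}$, use $J$-unitarity to force the second eigenvalue to be $\overline{\lambda}^{-1}$, and apply the fixed real matrix $\frac{1}{\sqrt2}\begin{bmatrix}1&1\\1&-1\end{bmatrix}$, arriving at the same normal form $e^{\ii\theta}\begin{bmatrix}\cosh\log r&\sinh\log r\\ \sinh\log r&\cosh\log r\end{bmatrix}$; this is the form the paper obtains in this theorem, not the one in Theorem \ref{2}. The only differences are cosmetic: the paper takes the eigenvectors $(z_\pm,1)^T$ from the two boundary fixed points, so isotropy comes from $|z_\pm|=1$, and makes the eigenvalue complex at the end by conjugating with $qI_2$, whereas you get isotropy from $|\lambda|\neq 1$ and make $\lambda$ complex at the start by right-scaling $\boldsymbol{v}_1$.
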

\begin{proof}
	In this case, we have $b\neq \overline{c},\  \triangle(T)=\triangle>0$. At this point, $T$ has two distinct boundary fixed points $z_{+}$ and $z_{-}$ on $\partial\mathbb{B}$, satisfying
	$$
	|z_{+}|=|z_{-}|=1,\ \ z_{+}\neq z_{-.}
	$$
	Assume
	$$
	u_{+}=\begin{bmatrix}
		z_{+} \\
		1
	\end{bmatrix},\ \ u_{-}=\begin{bmatrix}
		z_{-} \\
		1
	\end{bmatrix},
	$$
	and
	$$
	t_{+}=cz_{+}+d,\ \ t_{-}=cz_{-}+d.
	$$
	Therefore,
	$$
	Tu_{+}=u_{+}t_{+},\ \ Tu_{-}=u_{-}t_{-}.
	$$
	That is, $t_{+}$ and $t_{-}$ are representatives of the two similarity equivalence classes of the right eigenvalues of $T$, respectively. $u_{+}$ and $u_{-}$ are right eigenvectors corresponding to $t_{+}$ and $t_{-}$, respectively. In fact, we can obtain the explicit forms of $t_{+}$ and $t_{-}$ from Lemma \ref{3}. Define $h=\overline{z_{+}}z_{-}-1$, then $h\neq 0$. Set
	$$
	S=\begin{bmatrix}
		z_{+}&z_{-}h^{-1}\\
		1 & h^{-1}
	\end{bmatrix}=(u_{+},u_{-}h^{-1}),\ \ H_{0}=\begin{bmatrix}
		0&1\\
		1&0
	\end{bmatrix}.
	$$
	Then $S^{\ast}JS=H_{0}$ and
	$$
	S^{-1}=H_{0}S^{\ast}J=\begin{bmatrix}
		\overline{h^{-1}}\overline{z_{-}}&-\overline{h^{-1}}\\
		\overline{z_{+}} & -1
	\end{bmatrix}.
	$$
	Let $\mu=ht_{-}h^{-1}$, then $\mu$ is similar to $t_{-}$ and
	
	$$
	S^{-1}TS=D=\begin{bmatrix}
		t_{+}&0\\
		0&\mu
	\end{bmatrix}.
	$$
	Since $T^{\ast}JT=J$ and $S^{\ast}JS=H_{0}$, we obtain
	$$
	D^{\ast}H_{0}D=H_{0}.
	$$
	Expanding the left-hand side directly,
	$$
	D^{\ast}H_{0}D=\begin{bmatrix}
		0&\overline{t_{+}}\mu\\
		\overline{\mu}t_{+}&0
	\end{bmatrix}.
	$$
	Hence $\overline{t_{+}}\mu=1$, which means $\mu=(\overline{t_{+}})^{-1}=ht_{-}h^{-1}$. Let
	$$
	L=L^{-1}=\frac{1}{\sqrt{2}}\begin{bmatrix}
		1&1\\
		1&-1
	\end{bmatrix}
	$$ and
	$$
	X_{0}=LS^{-1}=\frac{1}{\sqrt{2}}\begin{bmatrix}
		\overline{h^{-1}}\overline{z_{-}}+\overline{z_{+}}&-\overline{h^{-1}}-1\\
		\overline{h^{-1}}\overline{z_{-}}-\overline{z_{+}} & 1-\overline{h^{-1}}
	\end{bmatrix}.
	$$
	Since $L^{\ast}JL=H_{0}$, we have
	$$
	X_{0}^{\ast}JX_{0}=(S^{-1})^{\ast}L^{\ast}JLS^{-1}=(S^{-1})^{\ast}H_{0}S^{-1}=J,
	$$
	this implies $X_{0}\in U(1,1;\mathbb{H})$. From Lemma \ref{1}, there exists a quaternion $q$ of modulus $1$ such that
	$$
	qt_{+}q^{-1}=re^{\boldsymbol{i}\theta},\ \ r=|t_{+}|>1.
	$$
	Then $q\mu q^{-1}=r^{-1}e^{\boldsymbol{i}\theta}$. Let
	$$
	Q=\begin{bmatrix}
		q&0\\
		0&q
	\end{bmatrix}\in U(1,1;\mathbb{H}),\ \ W=QX_{0}\in U(1,1;\mathbb{H}),\ \ t=\log r>0.
	$$
	Then
	$$
	\cosh t=\frac{r+r^{-1}}{2}, \ \ \sinh t=\frac{r-r^{-1}}{2},
	$$
	and finally
	$$
	WTW^{-1}=e^{\boldsymbol{i}\theta}\begin{bmatrix}
		\cosh t&\sinh t\\
		\sinh t&\cosh t
	\end{bmatrix}.
	$$
	It is easy to see that $WTW^{-1}$ is a complex matrix in $U(1,1;\mathbb{H})$, which completes the proof.
\end{proof}
\begin{remark}
	The matrix $T$ in Theorem \ref{4} is diagonalizable in $GL(2,\mathbb{H})$, since it has two right-linearly independent eigendirections. However, $T$ is not diagonalizable in $U(1,1;\mathbb{H})$, because if
	$$
	E=\begin{bmatrix}
		\alpha&0\\
		0&\delta
	\end{bmatrix}\in U(1,1;\mathbb{H}),
	$$
	then $E^{\ast}JE=J$ is forced to give $|\alpha|=|\delta|=1$, which contradicts the fact that $T$ is compound loxodromic.
\end{remark}
\begin{example}
	Let
	\[
	T=\frac14\begin{bmatrix}5\jj&3\jj\\3\jj&5\jj\end{bmatrix}.
	\]
	The defining relations of $U(1,1;\mathbb H)$ are satisfied. Furthermore,
	\[
	\Delta(T)=\left|\frac34\jj-\overline{\frac34\jj}\right|^2
	=\frac94>0,
	\]
	and hence the associated M\"obius transformation $g_T$ is compound loxodromic. Put
	\[
	q=\frac{1-\kk}{\sqrt2},\qquad
	W=\begin{bmatrix}q&0\\0&q\end{bmatrix}\in U(1,1;\mathbb H).
	\]
	Since $q\jj\bar q=\ii$, we obtain
	\[
	WTW^{-1}=\frac14\begin{bmatrix}5\ii&3\ii\\3\ii&5\ii\end{bmatrix}
	=e^{\ii\pi/2}
	\begin{bmatrix}\cosh(\log2)&\sinh(\log2)\\
		\sinh(\log2)&\cosh(\log2)\end{bmatrix}.
	\]
	Thus Theorem \ref{4} holds here with $t=\log2$ and $\theta=\pi/2$.
\end{example}

\section{Complex representatives of parabolic elements in $U(1,1;\mathbb{H})$}
In this section we prove that every matrix $T$ whose related quaternionic M\"{o}bius  transformation $\psi(T)$  is parabolic is similar to a complex matrix in $U(1,1;\mathbb{H})$.

\begin{theorem}\label{233}
	Let $g(z)=(az+b)(cz+d)^{-1}$ be a simple parabolic quaternionic M\"{o}bius  transformation, $T=\begin{bmatrix}
		a & b \\
		c & d
	\end{bmatrix}$, then there exists a matrix $Z \in U(1,1;\mathbb{H})$ such that $ZTZ^{-1}$ is  complex.
\end{theorem}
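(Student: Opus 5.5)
The plan is to follow the template of the proof of Theorem \ref{2}, since the simple parabolic case differs from the simple loxodromic case only in the condition on $d_0$. In the present case $b=\overline{c}\neq 0$ and $d_0^2=1$, so $d_0=\pm 1$. The proof proceeds in two steps: a diagonal $J$-unitary rescaling that makes the off-diagonal entries real, followed by a simultaneous unimodular conjugation that makes the diagonal entries complex.

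\emph{Step 1.} Set $\overline{q}=c/|c|$, so that $q=\overline{c}/|c|$ is unimodular, and put
$$M=\begin{bmatrix}\overline{q}&0\\0&1\end{bmatrix}\in U(1,1;\mathbb{H}).$$
Then
$$MTM^{-1}=\begin{bmatrix}\overline{q}aq & \overline{q}\,\overline{c}\\ cq & d\end{bmatrix},$$
and $\overline{q}\,\overline{c}=cq=|c|$. To identify the $(1,1)$ entry, I use the relation $\overline{a}b=\overline{c}d$ from (\ref{1.1}) with $b=\overline{c}$. This gives $\overline{a}=\overline{c}\,d\,\overline{c}^{-1}$, and taking conjugates gives $a=c^{-1}\overline{d}c$. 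Hence
$$\overline{q}aq=\frac{c}{|c|}\,c^{-1}\overline{d}c\,\frac{\overline{c}}{|c|}=\overline{d},$$
and therefore
$$MTM^{-1}=\begin{bmatrix}\overline{d}&|c|\\|c|&d\end{bmatrix}.$$

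\emph{Step 2.} From $|d|^2-|b|^2=1$ (which follows from (\ref{1.1}) since $|d|=|a|$ and $|b|=|c|$) together with $d_0^2=1$, I get $|\mathrm{Im}(d)|^2=|c|^2>0$. Consequently $d$ is non-real, with real part $d_0$ and modulus $\sqrt{1+|c|^2}$. By Lemma \ref{1} there is a unimodular $y_1$ with
$$y_1 d\overline{y_1}=d_0-\ii|c|,\qquad\text{and hence}\qquad y_1\overline{d}\,\overline{y_1}=d_0+\ii|c|.$$
Put $N=\mathrm{diag}(y_1,y_1)\in U(1,1;\mathbb{H})$. Since $y_1|c|\overline{y_1}=|c|$, the real off-diagonal entries are unchanged. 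Taking $Z=NM\in U(1,1;\mathbb{H})$ therefore yields
$$ZTZ^{-1}=\begin{bmatrix}d_0+\ii|c|&|c|\\|c|&d_0-\ii|c|\end{bmatrix},\qquad d_0=\pm1,$$
which is a complex matrix. This is the formula of Theorem \ref{2} specialized to $d_0^2=1$.

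There is no serious obstacle in this argument. The only point needing care is the identity $a=c^{-1}\overline{d}c$ used in Step 1, which must be derived correctly from the defining relations (\ref{1.1}), keeping track of the order of factors under quaternionic conjugation. I would also remark that $\mathrm{Im}(d)\neq 0$, because $|\mathrm{Im}(d)|=|c|>0$, so Lemma \ref{1} applies with a genuinely non-real target $d_0-\ii|c|$.
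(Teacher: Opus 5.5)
Your proposal is correct and is essentially the paper's proof: the paper simply notes that the construction from Theorem~\ref{2} carries over verbatim, with $Z=NM$, $M=\mathrm{diag}(\overline{q},1)$ and $N=\mathrm{diag}(y_1,y_1)$, and with $\sqrt{1-d_0^2+|c|^2}=|c|$ once $d_0^2=1$. Your version additionally spells out two points the paper leaves implicit: the identity $a=c^{-1}\overline{d}c$, which pins down the $(1,1)$ entry $\overline{q}aq=\overline{d}$, and the check $|\mathrm{Im}(d)|=|c|>0$.
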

\begin{proof}
	In this case we have $b=\overline{c}\neq 0,\ d_{0}^{2}=1$. The rest of the proof is exactly the same as that of Theorem \ref{2}, here $Z$ is the same as $X$ in Theorem \ref{2}, just denoted differently and
	\begin{align*}
		ZTZ^{-1}&=\begin{bmatrix}
			d_{0}+\ii\sqrt{1-d_{0}^{2}+|c|^{2}} & |c| \\
			|c| & d_{0}-\ii\sqrt{1-d_{0}^{2}+|c|^{2}}
		\end{bmatrix}\\
		&=\begin{bmatrix}
			d_{0}+\ii|c| & |c| \\
			|c| & d_{0}-\ii|c|
		\end{bmatrix}.
	\end{align*}
\end{proof}
\begin{example}
	Let
	\[
	T=\begin{bmatrix}1-\ii&-\jj\\
		\jj&1-\ii\end{bmatrix}.
	\]
	Then $b=\bar c\ne0$ and $d_0^2=1$. Moreover,
	\[
	|1-\ii|^2-|\jj|^2=1,
	\qquad
	\overline{(1-\ii)}(-\jj)-\overline{\jj}(1-\ii)=0,
	\]
	so $T\in U(1,1;\mathbb H)$ and its corresponding M\"obius transformation $g_T$ is
	simple parabolic. For
	\[
	X=\begin{bmatrix}\jj&0\\0&1\end{bmatrix},
	\]
	we have
	\[
	XTX^{-1}=\begin{bmatrix}1+\ii&1\\1&1-\ii\end{bmatrix}.
	\]
	It can be seen that $XTX^{-1}$ is a complex matrix.
\end{example}
Below we prove that the compound parabolic quaternionic M\"{o}bius  transformation case can also be reduced to a complex matrix by similarity transformations in $U(1,1;\mathbb{H})$. Before that, we need to note a fact, namely that the quaternionic M\"{o}bius  transformation of  compound parabolic type necessarily admits a generalized right eigenvector. This conclusion does not hold for general matrices. For example,
$$
A=I_{2},\ \ \lambda=1.
$$
Although $\lambda=1$ is a right eigenvalue with multiplicity $2$, if we set
$$
A\boldsymbol{w}=\boldsymbol{w}+\boldsymbol{v},
$$
then $\boldsymbol{w}=\boldsymbol{w}+\boldsymbol{v}$. That is, $\boldsymbol{v}=\boldsymbol{0}$, which contradicts the fact that a right eigenvector is nonzero. Therefore, the fact that a right eigenvalue is of multiplicity $2$ does not by itself ensure the existence of a generalized right eigenvector of order two, it is additionally necessary that the matrix be non-diagonalizable.
\begin{lemma}
	Let $g(z)=(az+b)(cz+d)^{-1}$ be a compound parabolic quaternionic M\"{o}bius  transformation, $T=\begin{bmatrix}
		a & b \\
		c & d
	\end{bmatrix}$. Then the generalized right eigenvector of $T$ must exist.
\end{lemma}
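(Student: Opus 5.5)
The approach is to argue by contradiction: if $T$ had no generalized right eigenvector, then $T$ would be diagonalizable over $\mathbb{H}$, and we would show this is incompatible with the compound parabolic hypothesis. By the classification recalled in the introduction, a compound parabolic $T$ has a repeated right eigenvalue $t\neq\pm1$ with $|t|=1$ and a fixed point $z_0\in\partial\mathbb{B}$. Put $\boldsymbol v=(z_0,1)^T$ and $\lambda=cz_0+d$. Then $T\boldsymbol v=\boldsymbol v\lambda$, and $\boldsymbol v^*J\boldsymbol v=|z_0|^2-1=0$, so $\boldsymbol v$ is isotropic. We would also use the fact from \cite{WJX04} that a compound parabolic map has a \emph{unique} fixed point in $\overline{\mathbb{B}}$.

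First step: establish the dichotomy. Suppose no $\boldsymbol w$ exists with $T\boldsymbol w=\boldsymbol w\lambda'+\boldsymbol v$, where $\lambda'$ ranges over quaternions similar to $\lambda$ (replacing $\lambda$ by a conjugate only rescales $\boldsymbol v$ on the right). We would show that $T$ then has a second right eigenvector $\boldsymbol u$, right linearly independent of $\boldsymbol v$. Concretely, pass to the complex representation $\chi(T)\in M_4(\mathbb{C})$. There the right eigenvalue class of $\lambda$ corresponds to the eigenvalues $\lambda_c,\overline{\lambda_c}$ of $\chi(T)$. The repeated-eigenvalue hypothesis makes each of these have algebraic multiplicity $2$. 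If $\chi(T)$ were diagonalizable, its $\lambda_c$-eigenspace would be $2$-dimensional, and this corresponds to a right $\mathbb{H}$-span of dimension $2$. If $\chi(T)$ is not diagonalizable, a complex Jordan chain $\chi(T)w=\lambda_c w+v$ exists, and translating back gives the generalized right eigenvector (with $\lambda$ chosen complex after conjugating by a unit quaternion, which is allowed by Lemma \ref{1}). So we only need to exclude the diagonalizable case.

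Second step: exclude diagonalizability. Suppose $T\boldsymbol u=\boldsymbol u\mu$ with $\mu$ similar to $\lambda$ and $\boldsymbol u$ independent of $\boldsymbol v$. Then $T$ preserves the Hermitian form, so
\[\langle \boldsymbol u,\boldsymbol u\rangle_J=\overline{\mu}\langle \boldsymbol u,\boldsymbol u\rangle_J\mu=|\mu|^2\langle\boldsymbol u,\boldsymbol u\rangle_J .\]
This is consistent because $|\mu|=1$, so we need a sharper argument. We would use the right $\mathbb{H}$-span instead. Every vector $\boldsymbol v\alpha+\boldsymbol u\beta$ of the form that is an eigenvector of $T$ gives a fixed point of $g$. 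The signature of $J$ is $(1,1)$, so the plane spanned by $\boldsymbol u,\boldsymbol v$ (the whole space) contains negative vectors. If the eigenspace of the class of $\lambda$ is all of $\mathbb{H}^{1,1}$, then $T$ acts on it as right multiplication twisted by similar eigenvalues. One then produces an eigenvector $\boldsymbol x$ with $\boldsymbol x^*J\boldsymbol x<0$, that is, a fixed point of $g$ inside $\mathbb{B}$. Either $g$ then fixes a point of $\mathbb{B}$, making it elliptic, or the fixed point set is larger than a single boundary point. Both contradict the compound parabolic type.

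The main obstacle is this second step. In the quaternionic setting, "eigenspace" is not a right subspace unless the eigenvalue is real, so forming combinations of $\boldsymbol u,\boldsymbol v$ that remain eigenvectors needs care. I would first conjugate by $\mathrm{diag}(q,q)$ so that $\lambda=\mu\in\mathbb{C}\setminus\mathbb{R}$. Then I would restrict to combinations with complex coefficients commuting with $\lambda$. These are again eigenvectors for $\lambda$, and a complex $2$-plane inside the $J$-signature-$(1,1)$ space still contains a vector with $\boldsymbol x^*J\boldsymbol x<0$ unless it is totally isotropic. A totally isotropic complex plane is ruled out by the nondegeneracy of $J$ together with the independence of $\boldsymbol u$ and $\boldsymbol v$.
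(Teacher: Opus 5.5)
Your overall plan matches the paper's: first a Jordan-type dichotomy, then ruling out the case $T\sim\operatorname{diag}(\lambda,\lambda)$. The paper cites quaternionic Jordan theory for the dichotomy. Your route through $\chi(T)$ also works, and a chain vector $\boldsymbol{w}$ is automatically right independent of $\boldsymbol{v}$: $\boldsymbol{w}=\boldsymbol{v}\alpha$ would force $\lambda\alpha-\alpha\lambda=1$, which is impossible because the left side has real part $0$. The gap is in the second step. The two facts you invoke about complex planes are false in general. Take $\boldsymbol{v}=(\boldsymbol{k},1)^T$, $\boldsymbol{u}=(1,0)^T$ and $\alpha,\beta\in\mathbb{C}$. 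Then
\[
(\boldsymbol{v}\alpha+\boldsymbol{u}\beta)^*J(\boldsymbol{v}\alpha+\boldsymbol{u}\beta)=|\boldsymbol{k}\alpha+\beta|^2-|\alpha|^2=|\beta|^2+2\operatorname{Re}(\overline{\beta}\boldsymbol{k}\alpha)=|\beta|^2\geq 0 .
\]
So this complex plane has no negative vector, yet it is not totally isotropic. Next take $\boldsymbol{v}=(1,1)^T$ and $\boldsymbol{u}=(\boldsymbol{j},-\boldsymbol{j})^T$. The form on $\boldsymbol{v}\mathbb{C}+\boldsymbol{u}\mathbb{C}$ equals $4\operatorname{Re}(\overline{\alpha}\boldsymbol{j}\beta)=0$ identically, although $\boldsymbol{u},\boldsymbol{v}$ are right independent and $J$ is nondegenerate. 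The underlying reason is that the quaternionic form restricted to a right $\mathbb{C}$-plane generally has a nonzero $\boldsymbol{j}$-component. The cross terms then contribute nothing to $\boldsymbol{x}^*J\boldsymbol{x}$, so the plane need not inherit signature $(1,1)$.

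What is missing is the invariance argument you abandoned, applied to the off-diagonal entry rather than to $\langle\boldsymbol{u},\boldsymbol{u}\rangle_J$. First right-rescale $\boldsymbol{v}$ and $\boldsymbol{u}$ by unit quaternions so that both have the same eigenvalue $\lambda\in\mathbb{C}\setminus\mathbb{R}$. (Conjugating $T$ by $\operatorname{diag}(q,q)$, as you propose, does not change eigenvalues.) Then every $\boldsymbol{x},\boldsymbol{y}\in V=\boldsymbol{v}\mathbb{C}+\boldsymbol{u}\mathbb{C}$ satisfies
\[
\langle\boldsymbol{x},\boldsymbol{y}\rangle_J=\langle T\boldsymbol{x},T\boldsymbol{y}\rangle_J=\overline{\lambda}\langle\boldsymbol{x},\boldsymbol{y}\rangle_J\lambda .
\]
Hence $\langle\boldsymbol{x},\boldsymbol{y}\rangle_J$ commutes with $\lambda$ and lies in $\mathbb{C}$. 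This is the same computation as the paper's $h=\overline{\lambda}h\lambda$ in its normalization of the $J$-chain. It follows that $(\boldsymbol{v},\boldsymbol{u})^*J(\boldsymbol{v},\boldsymbol{u})=\begin{bmatrix}0&h\\ \overline{h}&m\end{bmatrix}$ is a complex Hermitian matrix. Here $h\neq 0$ because $(\boldsymbol{v},\boldsymbol{u})$ is invertible, so the determinant is $-|h|^2<0$ and the matrix is indefinite. This yields $\boldsymbol{x}\in V$ with $T\boldsymbol{x}=\boldsymbol{x}\lambda$ and $\boldsymbol{x}^*J\boldsymbol{x}<0$, that is, a fixed point of $g$ in $\mathbb{B}$, which is the contradiction you wanted. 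With this step supplied your argument is complete. It is also more explicit than the paper's brief appeal to the uniqueness of the boundary fixed point, which tacitly needs the same fact. Indeed, for the non-$J$-unitary conjugator $P=\begin{bmatrix}\boldsymbol{k}&1\\ 1&0\end{bmatrix}$ from your example, the fixed set of $P\operatorname{diag}(\lambda,\lambda)P^{-1}$ meets $\overline{\mathbb{B}}$ only at the single point $\boldsymbol{k}$.
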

\begin{proof}
	It follows from Lemma \ref{3} that there is only one similarity equivalence class of the right eigenvalues of $T$ and all the right eigenvalues are of modulus one. Choose a complex representative in the similarity class of right eigenvalues,
	$$
	\lambda\in\mathbb{C},\ \ |\lambda|=1.
	$$
	According to quaternionic Jordan theory, a right eigenvalue of multiplicity $2$ gives rise to two distinct cases
	$$
	\begin{bmatrix}
		\lambda&0\\
		0&\lambda
	\end{bmatrix}\ \text{or}\ \begin{bmatrix}
		\lambda&1\\
		0&\lambda
	\end{bmatrix}.
	$$
	Since $T$ is compound parabolic, it has a unique boundary fixed point. Consequently, it must have a Jordan block of size $2$
	$$
	P^{-1}TP=\begin{bmatrix}
		\lambda&1\\
		0&\lambda
	\end{bmatrix}.
	$$
	If we let $P=(\boldsymbol{v},\boldsymbol{w})$, then
	$$
	TP=P\begin{bmatrix}
		\lambda&1\\
		0&\lambda
	\end{bmatrix}.
	$$
	Compare the column vectors on both sides of the equation
	\begin{equation}\label{5}
		T\boldsymbol{v}=\boldsymbol{v}\lambda,\ \ T\boldsymbol{w}=\boldsymbol{v}+\boldsymbol{w}\lambda.
	\end{equation}
	Therefore, $\boldsymbol{w}$ is the desired generalized right eigenvector.
	
	More explicitly, we can solve for the vector $\boldsymbol{w}$ from $T$. Let $\boldsymbol{v}=(v_{1},v_{2})^{T}$ and $\boldsymbol{w}=(w_{1},w_{2})^{T}$. The second equality in (\ref{5}) is
	
	\begin{equation}\label{6}
		\begin{cases}
			aw_{1}+bw_{2}-w_{1}\lambda=v_{1}\\
			cw_{1}+dw_{2}-w_{2}\lambda=v_{2}
		\end{cases}
	\end{equation}
	
	These are two quaternion equations, which amount to eight real equations. Because quaternion multiplication is non-commutative. A convenient computational formulation is the following eight-dimensional real-linear system. For $q=q_{0}+q_{1}\boldsymbol{i}+q_{2}\boldsymbol{j}+q_{3}\boldsymbol{k}\in\mathbb{H}$, let $L(q)$ and $R(q)$ denote the real matrices representing left and right multiplication, respectively,
	$$
	L(q)=\begin{bmatrix}
		q_{0}&-q_{1}&-q_{2}&-q_{3}\\
		q_{1}&q_{0}&-q_{3}&q_{2}\\
		q_{2}&q_{3}&q_{0}&-q_{1}\\
		q_{3}&-q_{2}&q_{1}&q_{0}\
	\end{bmatrix},\ \ R(q)=\begin{bmatrix}
		q_{0}&-q_{1}&-q_{2}&-q_{3}\\
		q_{1}&q_{0}&q_{3}&-q_{2}\\
		q_{2}&-q_{3}&q_{0}&q_{1}\\
		q_{3}&q_{2}&-q_{1}&q_{0}\
	\end{bmatrix}.
	$$
	This means
	$$
	[q\boldsymbol{x}]_{\mathbb{R}}=L(q)[\boldsymbol{x}]_{\mathbb{R}},\ \ [\boldsymbol{x}q]_{\mathbb{R}}=R(q)[\boldsymbol{x}]_{\mathbb{R}}
	$$
	for any $[\boldsymbol{x}]_{\mathbb{R}}=(x_{0},x_{1},x_{2},x_{3})^{T}$. Thus, equation (\ref{6}) is equivalent to
	\begin{equation}\label{7}
		\begin{bmatrix}
			L(a)-R(\lambda)&L(b)\\
			L(c)&L(d)-R(\lambda)
		\end{bmatrix}\begin{bmatrix}
			[w_{1}]_{\mathbb{R}}\\
			[w_{2}]_{\mathbb{R}}
		\end{bmatrix}=\begin{bmatrix}
			[v_{1}]_{\mathbb{R}}\\
			[v_{2}]_{\mathbb{R}}
		\end{bmatrix}.
	\end{equation}
	When $T$ is compound parabolic, the system is consistent.
\end{proof}
Notice that the solution  of (\ref{7}) is generally non-unique. If $\boldsymbol{w}$ is a solution, then, under suitable commutation conditions, $\boldsymbol{w}+\boldsymbol{v}s$ may yield another representative of the same J-chain. This non-uniqueness is exactly the flexibility that will be used in the normalization below.

Let $z$ be the unique fixed point of $g(z)=(az+b)(cz+d)^{-1}$ on the boundary, which satisfies
$$
zcz+zd-az-b=0,\ \ |z|=1.
$$
Let $t=cz+d$. Then
$$
T\begin{bmatrix}
	z\\
	1
\end{bmatrix}=\begin{bmatrix}
	z\\
	1
\end{bmatrix}t.
$$
Let $q$ be a unit quaternion chosen so that $q^{-1}tq=\lambda\in\mathbb{C}$ and set $\boldsymbol{v}=(z,1)^{T}q$. Then we obtain
$$
T\boldsymbol{v}=\boldsymbol{v}\lambda\ \ \text{and}\ \ \boldsymbol{v}^{\ast}J\boldsymbol{v}=0.
$$
Let $h=\boldsymbol{v}^{\ast}J\boldsymbol{w}$, we aim to prove that $h\neq 0$. It can be seen that the orthogonal complement of a nonzero vector $\boldsymbol{v}$ is precisely the quaternionic line spanned by itself. That is,
$$
(\boldsymbol{v}\mathbb{H})^{\perp_{J}}=\boldsymbol{v}\mathbb{H}.
$$
If $h=0$, then $\boldsymbol{w}\in (\boldsymbol{v}\mathbb{H})^{\perp_{J}}=\boldsymbol{v}\mathbb{H}$, which would make $\boldsymbol{v}$ and $\boldsymbol{w}$ right linearly dependent. This contradicts the invertibility of $P=(\boldsymbol{v},\boldsymbol{w})$ in the J-chain. Hence $h\neq 0$.

Next we show that $h$ and $\lambda$ commute under multiplication. A direct computation yields
\begin{align*}
	h&=\langle\boldsymbol{w},\boldsymbol{v}\rangle_{J}\\
	&=\langle T\boldsymbol{w},T\boldsymbol{v}\rangle_{J}\\
	&=\langle\boldsymbol{w}\lambda+\boldsymbol{v},\boldsymbol{v}\lambda\rangle_{J}\\
	&=\overline{\lambda}h\lambda+\overline{\lambda}(\boldsymbol{v}^{\ast}J\boldsymbol{v})\\
	&=\overline{\lambda}h\lambda.
\end{align*}
Since $|\lambda|=1$, multiplying both sides of the equation on the left by $\lambda$ gives
\begin{equation}\label{8}
	\lambda h=h\lambda.
\end{equation}
Therefore, $h^{-1}$ also commutes multiplicatively with $\lambda$.

Define $\widehat{\boldsymbol{w}}=\boldsymbol{w}h^{-1}$, then
$$
\boldsymbol{v}^{\ast}J\widehat{\boldsymbol{w}}=hh^{-1}=1.
$$
Together with (\ref{8}), this gives
\begin{align*}
	T\widehat{\boldsymbol{w}}&=(T\boldsymbol{w})h^{-1}\\
	&=(\boldsymbol{w}\lambda+\boldsymbol{v})h^{-1}\\
	&=\boldsymbol{w}h^{-1}\lambda+\boldsymbol{v}h^{-1}\\
	&=\widehat{\boldsymbol{w}}\lambda+\boldsymbol{v}\beta,
\end{align*}
where $\beta=h^{-1}$.

Furthermore, since the Hermitian form takes real values on the same vector, we let $m=\widehat{\boldsymbol{w}}^{\ast}J\widehat{\boldsymbol{w}}\in\mathbb{R}$, 
$$
\boldsymbol{v}_{1}=\boldsymbol{v}\ \ \text{and} \ \ \boldsymbol{w}_{1}=\widehat{\boldsymbol{w}}-\frac{m}{2}\boldsymbol{v}.
$$
Then we have $\boldsymbol{v}_{1}^{\ast}J\boldsymbol{v}_{1}=0$ and
$$
\boldsymbol{v}_{1}^{\ast}J\boldsymbol{w}_{1}=\boldsymbol{v}^{\ast}J\widehat{\boldsymbol{w}}-\frac{m}{2}\boldsymbol{v}^{\ast}J\boldsymbol{v}=1.
$$
Moreover, because $\widehat{\boldsymbol{w}}^{\ast}J\boldsymbol{v}=\overline{\boldsymbol{v}^{\ast}J\widehat{\boldsymbol{w}}}=1$, we have
\begin{align*}
	\boldsymbol{w}_{1}^{\ast}J\boldsymbol{w}_{1}&=(\widehat{\boldsymbol{w}}-\frac{m}{2}\boldsymbol{v})^{\ast}J(\widehat{\boldsymbol{w}}-\frac{m}{2}\boldsymbol{v})\\
	&=m-\frac{m}{2}\widehat{\boldsymbol{w}}^{\ast}J\boldsymbol{v}-\frac{m}{2}\boldsymbol{v}^{\ast}J\widehat{\boldsymbol{w}}+\frac{m^{2}}{4}\boldsymbol{v}^{\ast}J\boldsymbol{v}\\
	&=m-\frac{m}{2}-\frac{m}{2}+0=0.
\end{align*}
Therefore
\begin{equation}\label{9}
	\boldsymbol{v}_{1}^{\ast}J\boldsymbol{v}_{1}=0,\ \ \boldsymbol{w}_{1}^{\ast}J\boldsymbol{w}_{1}=0,\ \ \boldsymbol{v}_{1}^{\ast}J\boldsymbol{w}_{1}=1.
\end{equation}
We have now completed the normalization. Notice that $\frac{m}{2}$ is real, it commutes multiplicatively with $\lambda$. Hence we obtain
\begin{equation}\label{10}
	T\boldsymbol{w}_{1}=\boldsymbol{w}_{1}\lambda+\boldsymbol{v}_{1}\beta,\ \ \beta=h^{-1}.
\end{equation}
In other words, subtracting $\frac{m}{2}\boldsymbol{v}$  affects only the J-norm of $\boldsymbol{w}_{1}$, leaving the chain coefficient $\beta$ unchanged. By virtue of (\ref{9}) and $T^{\ast}JT=J$, we compute the J-norm on both sides of (\ref{10}),
\begin{align*}
	0&=(T\boldsymbol{w}_{1})^{\ast}J(T\boldsymbol{w}_{1})\\
	&=(\boldsymbol{w}_{1}\lambda+\boldsymbol{v}_{1}\beta)^{\ast}J(\boldsymbol{w}_{1}\lambda+\boldsymbol{v}_{1}\beta)\\
	&=\overline{\beta}\lambda+\overline{\lambda}\beta.
\end{align*}

Therefore, $\gamma=\overline{\lambda}\beta$ must be a purely imaginary quaternion. Furthermore, since $\beta=h^{-1}$ commutes multiplicatively with $\lambda$, $\gamma$ commutes multiplicatively with $\lambda$. Put
$$
u=|\gamma|=|\beta|=|h|^{-1}>0,\ \ I=\frac{\gamma}{|\gamma|}.
$$
Then $I^2=-1$, $\gamma=uI$, and $\lambda\in\mathbb C_I$. Choose a
unit quaternion $\rho$ such that $\rho^{-1}I\rho=\ii$, and set
\[
\boldsymbol{v_2}=\boldsymbol{v_1}\rho,\quad \boldsymbol{w_2}=\boldsymbol{w_1}\rho,\quad
\lambda_2=\rho^{-1}\lambda\rho,\quad
\beta_2=\rho^{-1}\beta\rho.
\]
Then
\[
\overline{\lambda_2}\beta_2=\rho^{-1}(\overline{\lambda}\beta)\rho=u\boldsymbol{i},\qquad  T\boldsymbol{v_2}=\boldsymbol{v_2}\lambda_2,\qquad
T\boldsymbol{w_2}=\boldsymbol{w_2}\lambda_2+\boldsymbol{v_2}\beta_2,
\]
and
\[
\boldsymbol{v_2}^*J\boldsymbol{v_2}=\boldsymbol{w_2}^*J\boldsymbol{w_2}=0,\qquad \boldsymbol{v_2}^*J\boldsymbol{w_2}=1.
\]
Furthermore,
\[
\overline{\lambda_2}\beta_2
=\rho^{-1}(\bar\lambda\beta)\rho
=u\ii,
\]
and hence $\beta_2=\lambda_2u\ii$. Relabelling
$\boldsymbol{v_2},\boldsymbol{w_2},\lambda_2$ as $\boldsymbol{v_1},\boldsymbol{w_1},\lambda$, respectively, gives
\[
T\boldsymbol{w_1}=\boldsymbol{w_1}\lambda+\boldsymbol{v_1}(\lambda u\ii),
\qquad u>0.
\]

\begin{theorem}\label{229}
	Let $g(z)=(az+b)(cz+d)^{-1}$ be a compound parabolic quaternionic M\"{o}bius  transformation, $T=\begin{bmatrix}
		a & b \\
		c & d
	\end{bmatrix}$, then there exists a matrix $Y \in U(1,1;\mathbb{H})$ such that $YTY^{-1}$ is  complex.
\end{theorem}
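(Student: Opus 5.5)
The plan is to assemble the $J$-normalized right Jordan chain constructed just before Theorem \ref{229} into a conjugating matrix. Then I would move the resulting Gram matrix $H_{0}$ back to $J$ with the real matrix $L$, exactly as in the proof of Theorem \ref{4}.

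After the relabelling at the end of the preceding discussion, we have vectors $\boldsymbol{v_1},\boldsymbol{w_1}$ and a scalar $u>0$ with
\[
T\boldsymbol{v_1}=\boldsymbol{v_1}\lambda,\qquad T\boldsymbol{w_1}=\boldsymbol{w_1}\lambda+\boldsymbol{v_1}(\lambda u\ii),
\]
\[
\boldsymbol{v_1}^*J\boldsymbol{v_1}=\boldsymbol{w_1}^*J\boldsymbol{w_1}=0,\qquad \boldsymbol{v_1}^*J\boldsymbol{w_1}=1.
\]
The first thing to check is that, after conjugation by $\rho$, $\lambda$ genuinely lies in $\mathbb{C}$. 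Before the relabelling, $\lambda$ lies in $\mathbb{C}_I$, since it commutes with $\gamma=uI$. Because $\rho^{-1}I\rho=\ii$, the conjugate $\rho^{-1}\lambda\rho$ lies in $\mathbb{C}_{\ii}=\mathbb{C}$. Consequently the chain coefficient $\lambda u\ii$ is also complex.

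Set $P=(\boldsymbol{v_1},\boldsymbol{w_1})$. The normalization relations say precisely that $P^*JP=H_{0}$. Since $H_0$ is invertible, $P$ is invertible, and
\[
P^{-1}=H_0P^*J .
\]
The chain relations say
\[
TP=PK,\qquad K=\begin{bmatrix}\lambda&\lambda u\ii\\ 0&\lambda\end{bmatrix}\in M_2(\mathbb{C}).
\]
Now define $Y=LP^{-1}$ with $L=L^{-1}=\frac{1}{\sqrt2}\begin{bmatrix}1&1\\1&-1\end{bmatrix}$, so that $L^*JL=H_0$. From $P^*JP=H_0$ we get $J=(P^{-1})^*H_0P^{-1}$. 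Hence
\[
Y^*JY=(P^{-1})^*L^*JLP^{-1}=(P^{-1})^*H_0P^{-1}=J,
\]
so $Y\in U(1,1;\mathbb H)$.

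Finally,
\[
YTY^{-1}=LP^{-1}TPL^{-1}=LKL.
\]
This is a product of a real, a complex and a real matrix, hence complex. Explicitly,
\[
YTY^{-1}=\lambda\begin{bmatrix}1+\frac{u}{2}\ii&-\frac{u}{2}\ii\\ \frac{u}{2}\ii&1-\frac{u}{2}\ii\end{bmatrix}.
\]
I would verify this with a one-line multiplication, and also note that it visibly lies in $U(1,1;\mathbb H)$.

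The main obstacle is not this final assembly. It is making sure that the chain data feeding it are legitimate. One point is that $h\neq0$ and that $h$ commutes with $\lambda$, so that the rescaling $\boldsymbol{w}h^{-1}$ preserves the chain structure. Another is that the single conjugation by $\rho$ simultaneously makes $\lambda$ and $\beta$ complex. Both were established before the statement: $\lambda$ and $\gamma$ lie in a common slice $\mathbb{C}_I$, and $\rho$ rotates that slice onto $\mathbb{C}$. So the proof reduces to the matrix bookkeeping above.
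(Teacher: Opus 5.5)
Your proposal is correct and is essentially the paper's proof: you take the same matrix $P=(\boldsymbol{v},\boldsymbol{w})$ with $P^*JP=H_0$ and $P^{-1}=H_0P^*J$, follow it by a fixed matrix carrying $H_0$ to $J$, and your computation of $LKL$ checks out. The only difference is that the paper uses the complex matrix $C=\begin{bmatrix}1&\frac12\\-\ii&\frac{\ii}{2}\end{bmatrix}$ in place of $L$, which yields the symmetric form $\lambda\begin{bmatrix}1+\ii u&u\\u&1-\ii u\end{bmatrix}$ that matches Lemma \ref{21} for later use; your normal form is the conjugate of that one (with $u/2$ in place of $u$) by $\mathrm{diag}(1,\ii)\in U(1,1;\mathbb{H})$.
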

\begin{proof}
	In this case, $b\neq\overline{c}$ and $\triangle(T)=\triangle=0$. Let $(\boldsymbol{v},\boldsymbol{w})$ be the $J$-normalized right $J$-chain constructed
	above, with chain coefficient $\lambda u\mathrm i$. Thus
	\[
	T\boldsymbol{v}=\boldsymbol{v}\lambda,\qquad
	T\boldsymbol{w}=\boldsymbol{w}\lambda+\boldsymbol{v}(\lambda u\ii),
	\]
	and
	\[
	\boldsymbol{v}^*J\boldsymbol{v}=\boldsymbol{w}^*J\boldsymbol{w}=0,\qquad \boldsymbol{v}^*J\boldsymbol{w}=1.
	\]
	
	Let
	$$
	P=\begin{bmatrix}
		v_{1}&w_{1}\\
		v_{2}&w_{2}
	\end{bmatrix}.
	$$
	Then
	$$
	P^{\ast}JP=H_{0},\ \ P^{-1}=H_{0}P^{\ast}J=\begin{bmatrix}
		\overline{w_{1}}&-\overline{w_{2}}\\
		\overline{v_{1}}&-\overline{v_{2}}
	\end{bmatrix}.
	$$
	Therefore,
	$$
	D_{p}\triangleq P^{-1}TP=\begin{bmatrix}
		\lambda&\lambda u\boldsymbol{i}\\
		0&\lambda
	\end{bmatrix}.
	$$
	Define a fixed matrix $C$ as
	$$
	C=\begin{bmatrix}
		1&\frac{1}{2}\\
		-\boldsymbol{i}&\frac{\boldsymbol{i}}{2}
	\end{bmatrix}.
	$$
	It can be verified that
	$$
	C^{-1}=\begin{bmatrix}
		\frac{1}{2}&\frac{\boldsymbol{i}}{2}\\
		1&-\boldsymbol{i}
	\end{bmatrix},\ \ C^{\ast}JC=H_{0}.
	$$
	Let $Y=CP^{-1}$, that is,
	$$
	Y=\begin{bmatrix}
		\overline{w_{1}}+\frac{1}{2}\overline{v_{1}}&-\overline{w_{2}}-\frac{1}{2}\overline{v_{2}}\\
		-\boldsymbol{i}\overline{w_{1}}+\frac{\boldsymbol{i}}{2}\overline{v_{1}}&\boldsymbol{i}\overline{w_{2}}-\frac{\boldsymbol{i}}{2}\overline{v_{2}}
	\end{bmatrix}.
	$$
	Then $Y$ is invertible and
	$$
	YTY^{-1}=CP^{-1}TPC^{-1}=CD_{p}C^{-1}=\lambda\begin{bmatrix}
		1+\boldsymbol{i}u&u\\
		u&1-\boldsymbol{i}u
	\end{bmatrix}\triangleq B.
	$$
	It is immediate that $B$ is complex. On the other hand,
	$$
	Y^{\ast}JY=(P^{-1})^{\ast}C^{\ast}JCP^{-1}=(P^{-1})^{\ast}H_{0}P^{-1}=J,
	$$
	this implies $Y\in U(1,1;\mathbb{H})$. This completes our proof.
\end{proof}

\begin{example}
	Let
	\[
	T=\begin{bmatrix}-1+\jj&\jj\\
		\jj&1+\jj\end{bmatrix}.
	\]
	One checks directly that $T\in U(1,1;\mathbb{H})$. Moreover, $b\ne\bar c$ and
	\[
	\Delta(T)=|\jj-(-\jj)|^2-((-1)-1)^2=0.
	\]
	Thus the corresponding M\"obius transformation $g_T$ is compound parabolic. Let
	\[
	q=\frac{1-\kk}{\sqrt2},\qquad
	W=\begin{bmatrix}q&0\\0&q\end{bmatrix}.
	\]
	Since $q\jj\bar q=\ii$, it follows that
	\[
	WTW^{-1}=\begin{bmatrix}-1+\ii&\ii\\
		\ii&1+\ii\end{bmatrix}
	=\ii\begin{bmatrix}1+\ii&1\\1&1-\ii\end{bmatrix}.
	\]
	This is the normal form in Theorem \ref{229} with $\lambda=\ii$ and $u=1$.
\end{example}

\section{Complete similarity invariants in $U(1,1;\mathbb{H})$}
In this section, we give complete similarity invariants for matrices in $U(1,1;\mathbb{H})$.
\begin{lemma}[\cite{YHF}]
	For $T\in U(1,1;\mathbb{H})$, $\triangle(T)$ and $Tr(T)$ are both similarity invariants.
\end{lemma}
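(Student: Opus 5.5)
We must show: if $S\in U(1,1;\mathbb H)$ and $T'=STS^{-1}$, then $Tr(T')=Tr(T)$ and $\triangle(T')=\triangle(T)$. The plan is to verify this on a generating set of $U(1,1;\mathbb H)$ and then use that invariance under each generator implies invariance under all products. For the trace, a cleaner route avoids generators altogether. We use the complex representation $\chi:M_2(\mathbb H)\to M_4(\mathbb C)$, obtained by writing each quaternion as $\alpha+\beta\jj$ with $\alpha,\beta\in\mathbb C$. This map is an injective algebra homomorphism with $\operatorname{tr}_{\mathbb C}\chi(A)=\sum_i 2\,Re(a_{ii})=Tr(A)$. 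Since $\chi(STS^{-1})=\chi(S)\chi(T)\chi(S)^{-1}$, the ordinary trace gives $Tr(STS^{-1})=Tr(T)$, and this holds for any invertible $S$.

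For $\triangle(T)$ I would express it through quantities manifestly invariant under conjugation. My first attempt is to write $\triangle(T)$ as a polynomial in the coefficients of the characteristic polynomial of $\chi(T)$, which are similarity invariants. The entries $b,c$ enter asymmetrically, so I would rather relate $\triangle$ to $Tr(T^2)$ together with $Tr(T)$ and the constraints (1.1). Set $a_0=Re(a)$ and $d_0=Re(d)$. Then
\[
Tr(T^2)=2Re(a^2+bc+cb+d^2)=2\bigl(2a_0^2-|a|^2+2d_0^2-|d|^2\bigr)+4Re(bc).
\]
Using $|a|=|d|$, $|b|=|c|$ and $|a|^2-|c|^2=1$, we also have
\[
|b-\overline c|^2=2|c|^2-2Re(bc),
\]
because $Re(b\,\overline{\overline c})=Re(bc)$. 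Substituting $Re(bc)=|c|^2-\tfrac12|b-\overline c|^2$ gives
\[
Tr(T^2)=4a_0^2+4d_0^2-4|a|^2+4|c|^2-2|b-\overline c|^2=4a_0^2+4d_0^2-4-2|b-\overline c|^2 .
\]
Hence
\[
|b-\overline c|^2=2a_0^2+2d_0^2-2-\tfrac12 Tr(T^2).
\]
Subtracting $(a_0-d_0)^2$ yields
\[
\triangle(T)=(a_0+d_0)^2-2-\tfrac12 Tr(T^2)=\tfrac14 Tr(T)^2-\tfrac12 Tr(T^2)-2 .
\]
Since $Tr(T)$ and $Tr(T^2)$ are similarity invariants, with $(STS^{-1})^2=ST^2S^{-1}$, the invariance of $\triangle$ follows.

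The main obstacle is the algebra in the middle step. I need to check carefully the identity $Re(xy)=Re(yx)$ used to combine $bc$ and $cb$, and also the sign conventions relating $Re(b\,c)$ to $|b-\overline c|^2$. One point requires care: the formula for $\triangle$ uses the defining relations (1.1), so it is valid only for $T\in U(1,1;\mathbb H)$. This is exactly why the conjugator $S$ must lie in $U(1,1;\mathbb H)$: it guarantees that $STS^{-1}$ remains in the group, so the same formula applies to it.
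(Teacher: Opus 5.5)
Your proof is correct. The paper gives no argument of its own for this lemma (it is quoted from \cite{YHF}). Your middle computation is exactly the identity $\triangle(T)=\frac14 Tr(T)^2-\frac12 Tr(T^2)-2$, which the paper later records as Lemma \ref{88}, also from \cite{YHF}. Combining that identity with conjugation invariance of $Tr$, applied to both $T$ and $T^2$, is the natural route, and your algebra checks out line by line. This includes $Re(bc)=Re(cb)$ and $|b-\overline{c}|^2=|b|^2+|c|^2-2Re(bc)$. You are also right that the formula must be applied to $STS^{-1}$, which is why the conjugator has to lie in $U(1,1;\mathbb H)$.

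Two small remarks. First, the relations you take from (1.1) follow directly from the group condition. The identity $T^{*}JT=J$ gives $|a|^2-|c|^2=|d|^2-|b|^2=1$. The identity $TJT^{*}=J$, which holds because $T^{-1}=JT^{*}J$, gives $|a|^2-|b|^2=1$. Hence $|b|=|c|$ and $|a|=|d|$ for every $T\in U(1,1;\mathbb H)$. Second, trace invariance does not need $\chi$: the same fact $Re(xy)=Re(yx)$ you use in the middle step already gives $Tr(AB)=Tr(BA)$ for quaternionic matrices. The opening plan via a generating set is never used and can be dropped.
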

\begin{lemma}\label{111}
	For $T=\begin{bmatrix}
		a&b\\
		c&d
	\end{bmatrix}\in U(1,1;\mathbb{H})$, if $b=\overline{c}=0$, then $T$ is similar to
	$$
	\begin{bmatrix}
		\xi&0\\
		0&\varphi
	\end{bmatrix}
	$$
	in $U(1,1;\mathbb{H})$, where $\xi$ and $\varphi$ are two representative elements of $\sigma_{R}(T)$. Moreover, if $a_0\neq d_0$, $\xi$ and $\varphi$ are not similar. If $a_0=d_0$, $\xi$ and $\varphi$ are similar.
\end{lemma}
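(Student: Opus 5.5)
The plan is to observe that when $b=\overline{c}=0$ the matrix $T$ is already diagonal, $T=\operatorname{diag}(a,d)$. The defining relations (\ref{1.1}) then give $|a|^2=1+|c|^2=1$ and $|d|=|a|=1$, so $a$ and $d$ are unit quaternions. Thus the only remaining work is to identify $a$ and $d$ with representatives of $\sigma_R(T)$ and to check the similarity statements.

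First I will compute the right eigenvalues directly rather than through Lemma \ref{3}. We have $T e_1=e_1 a$ and $T e_2=e_2 d$, so $a,d\in\sigma_R(T)$. The right eigenvalues are closed under quaternionic similarity, since $T\boldsymbol{v}=\boldsymbol{v}\lambda$ implies $T(\boldsymbol{v}p)=(\boldsymbol{v}p)(p^{-1}\lambda p)$. Conversely, suppose $T\boldsymbol{v}=\boldsymbol{v}\lambda$ with $\boldsymbol{v}=(v_1,v_2)^T\neq 0$. Then $av_1=v_1\lambda$ and $dv_2=v_2\lambda$, so $\lambda$ is similar to $a$ (if $v_1\ne0$) or to $d$ (if $v_2\neq0$). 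Hence $\sigma_R(T)=[a]\cup[d]$, where $[\cdot]$ denotes the similarity class. As a consistency check, this agrees with Lemma \ref{3}, since $\Delta(T)=-(a_0-d_0)^2\le 0$.

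Now let $\xi$ and $\varphi$ be arbitrary representatives of the classes of $a$ and $d$, respectively. By Lemma \ref{1} there are unit quaternions $p_1,p_2$ with $p_1ap_1^{-1}=\xi$ and $p_2dp_2^{-1}=\varphi$. Set $S=\operatorname{diag}(p_1,p_2)$. Since $|p_1|=|p_2|=1$, we get $S^*JS=J$, so $S\in U(1,1;\mathbb{H})$, and $STS^{-1}=\operatorname{diag}(\xi,\varphi)$. For the ``moreover'' part, $|a|=|d|=1$, so by Lemma \ref{1} $\xi\sim\varphi$ if and only if $a\sim d$, which holds if and only if $a_0=d_0$. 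This gives both cases at once. If one wants $\xi$ and $\varphi$ complex, Lemma \ref{1} also lets us choose $\xi=a_0+\ii\sqrt{1-a_0^2}$ and similarly for $\varphi$.

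The main obstacle is only an interpretive one. We must make sure that ``two representative elements of $\sigma_R(T)$'' means one representative from each class, and that in the case $a_0=d_0$ the single class still yields two similar representatives. The computation $\sigma_R(T)=[a]\cup[d]$ settles this. Beyond that, the argument reduces to the unit-modulus observation and the fact that diagonal unitary conjugators lie in $U(1,1;\mathbb{H})$.
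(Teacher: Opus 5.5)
Your argument is correct. The paper omits the proof as ``straightforward,'' and what you wrote is exactly the direct verification it has in mind: $T=\operatorname{diag}(a,d)$ with $|a|=|d|=1$, the right spectrum is $[a]\cup[d]$, and the conjugator is the diagonal matrix $\operatorname{diag}(p_1,p_2)$ with $p_1,p_2$ unit quaternions. Your reading of ``two representative elements'' as \emph{ordered} representatives $\xi\in[a]$, $\varphi\in[d]$ is the right one, since the order cannot be swapped when $a_0\neq d_0$. A $U(1,1;\mathbb{H})$-conjugator must send the $J$-positive eigenline $e_1\mathbb{H}$ to a $J$-positive eigenline; equivalently, $\eta(T)=\operatorname{sgn}(a_0-d_0)$ is a similarity invariant.
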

The proof of Lemma \ref{111} is straightforward, so we omit it.
\begin{lemma}\label{21}
	Suppose that $a\in\mathbb{R},\ u>0,\ v>0$ such that $s=\sqrt{1-a^2+u^2}\geq 0,\ t=\sqrt{1-a^2+v^2}\geq 0$. Then the two matrices
	$$
	T_u=\begin{bmatrix}
		a+\boldsymbol{i}s&u\\
		u&a-\boldsymbol{i}s
	\end{bmatrix},\ \ T_v=\begin{bmatrix}
		a+\boldsymbol{i}t&v\\
		v&a-\boldsymbol{i}t
	\end{bmatrix}
	$$
	are both in $U(1,1;\mathbb{H})$ and they are similar in $U(1,1;\mathbb{H})$.
\end{lemma}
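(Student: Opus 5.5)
We will verify membership directly from the defining relations \eqref{1.1}, and then build an explicit conjugator in $U(1,1;\mathbb H)$. It will be a one-parameter family of complex $J$-unitary matrices acting on the traceless part of $T_u$ by a hyperbolic rotation.

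\textbf{Membership.} For $T_u$ we have $|a+\ii s|^2-u^2=a^2+s^2-u^2=1$ by the definition of $s$. Also $|a+\ii s|=|a-\ii s|$ and $|b|=|c|=u$. Moreover $\overline{(a+\ii s)}\,u=u\,(a-\ii s)$ and $(a+\ii s)u=u\,\overline{(a-\ii s)}$. So all relations in \eqref{1.1} hold, and likewise for $T_v$.

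\textbf{Decomposition.} Put
$$K=\begin{bmatrix}1&0\\0&-1\end{bmatrix},\qquad X=\begin{bmatrix}0&1\\1&0\end{bmatrix},\qquad E=\begin{bmatrix}0&1\\-1&0\end{bmatrix}.$$
Then $T_u=aI+s\,\ii K+uX$ and $T_v=aI+t\,\ii K+vX$. Since $aI$ is central, it suffices to conjugate $N_u=s\,\ii K+uX$ to $N_v=t\,\ii K+vX$.

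\textbf{The conjugator.} Consider
$$R_\tau=\exp(\tau\,\ii E)=\begin{bmatrix}\cosh\tau&\ii\sinh\tau\\-\ii\sinh\tau&\cosh\tau\end{bmatrix}.$$
A direct check of \eqref{1.1} shows $R_\tau\in U(1,1;\mathbb H)$: indeed $\cosh^2\tau-\sinh^2\tau=1$, and $\overline{\cosh\tau}\,(\ii\sinh\tau)=\overline{(-\ii\sinh\tau)}\cosh\tau$, and so on.

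Next compute commutators. From $[E,K]=-2X$ and $[E,X]=2K$ we get
$$[\ii E,\ii K]=2X,\qquad [\ii E,X]=2\ii K.$$
Hence $\operatorname{ad}(\ii E)$ preserves the real span of $\{\ii K,X\}$. In coordinates $(s,u)$ it acts by the matrix $\begin{bmatrix}0&2\\2&0\end{bmatrix}$. Exponentiating, we obtain
$$R_\tau N_u R_\tau^{-1}=s'\,\ii K+u'X,$$
where
$$s'=s\cosh2\tau+u\sinh2\tau,\qquad u'=s\sinh2\tau+u\cosh2\tau.$$
This map preserves $s^2-u^2=1-a^2$. It would be prudent to confirm this formula by a direct $2\times 2$ multiplication, which is routine.

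\textbf{Choosing $\tau$.} The remaining step, and the only real point, is to show that $(t,v)$ lies in the orbit of $(s,u)$ under these hyperbolic rotations. This needs the two points to lie on the same connected component of the conic $s^2-u^2=1-a^2$, which we check case by case.

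If $1-a^2>0$, write $s=r\cosh\alpha$, $u=r\sinh\alpha$ and $t=r\cosh\beta$, $v=r\sinh\beta$, with $r=\sqrt{1-a^2}$. Both points lie on the branch $s>0$, and we take $2\tau=\beta-\alpha$.

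If $1-a^2<0$, write $u=r\cosh\alpha$, $s=r\sinh\alpha$ with $r=\sqrt{a^2-1}$. Here $s\ge 0$ forces $\alpha\ge0$, and similarly for $(t,v)$. Both points lie on the branch $u>0$, and again $2\tau=\beta-\alpha$ works.

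If $a^2=1$, then $s=u$ and $t=v$. The rotation maps $(u,u)$ to $e^{2\tau}(u,u)$, so we take $e^{2\tau}=v/u$, which is possible since $u,v>0$.

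In every case $R_\tau T_uR_\tau^{-1}=T_v$ with $R_\tau\in U(1,1;\mathbb H)$, which proves that $T_u$ and $T_v$ are similar in $U(1,1;\mathbb H)$. The hypotheses $u,v>0$ and $s,t\ge0$ are exactly what place both points on the same branch, so this branch check is where the hypotheses are used.
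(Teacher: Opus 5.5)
Your proof is correct and is essentially the paper's: the paper's conjugator $S$ (built from $p=u+s$, $q=v+t$) is exactly your $R_\tau$ with $\cosh\tau=\frac{p+q}{2\sqrt{pq}}$ and $\sinh\tau=\frac{q-p}{2\sqrt{pq}}$, i.e.\ $e^{2\tau}=\frac{v+t}{u+s}$. That one choice handles all cases at once, because your formulas give $s'+u'=(s+u)e^{2\tau}$ and $u'-s'=(u-s)e^{-2\tau}$, and $v^2-t^2=u^2-s^2$; so your case split on the sign of $1-a^2$ is valid but not needed.
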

\begin{proof}
	First we prove $T_u$ and $T_v$ are in $U(1,1;\mathbb{H})$. Recall that the matrix
	$$
	J=\begin{bmatrix}
		1&0\\
		0&-1
	\end{bmatrix}.
	$$
	A simple calculation shows that
	$$
	T_{u}^{\ast}JT_{u}=J,\ \ T_{v}^{\ast}JT_{v}=J,
	$$
	which implies that $T_u$ and $T_v$ are in $U(1,1;\mathbb{H})$. Next, we aim to find a matrix $S$ in $U(1,1;\mathbb{H})$ such that $ST_{u}S^{-1}=T_{v}$.
	
	Let
	$$
	S=\frac{1}{2\sqrt{pq}}\begin{bmatrix}
		p+q&\boldsymbol{i}(q-p)\\
		-\boldsymbol{i}(q-p)&p+q
	\end{bmatrix},
	$$
	where
	$$
	p=u+s>0,\ \ q=v+t>0.
	$$
	It can be verified that $S^{\ast}JS=J$ and
	$$
	S^{-1}=\begin{bmatrix}
		\frac{p+q}{2\sqrt{pq}}&-\boldsymbol{i}\frac{q-p}{2\sqrt{pq}}\\
		\boldsymbol{i}\frac{q-p}{2\sqrt{pq}}&\frac{p+q}{2\sqrt{pq}}
	\end{bmatrix}.
	$$
	That is, $S\in U(1,1;\mathbb{H})$. Therefore,
	$$
	ST_{u}S^{-1}=\frac{1}{2\sqrt{pq}}\begin{bmatrix}
		p+q&\boldsymbol{i}(q-p)\\
		-\boldsymbol{i}(q-p)&p+q
	\end{bmatrix}\begin{bmatrix}
		a+\boldsymbol{i}s&u\\
		u&a-\boldsymbol{i}s
	\end{bmatrix}\begin{bmatrix}
		\frac{p+q}{2\sqrt{pq}}&-\boldsymbol{i}\frac{q-p}{2\sqrt{pq}}\\
		\boldsymbol{i}\frac{q-p}{2\sqrt{pq}}&\frac{p+q}{2\sqrt{pq}}
	\end{bmatrix}=T_{v},
	$$
	which shows that $T_u$ is similar to $T_v$ in $U(1,1;\mathbb{H})$.
\end{proof}
\begin{definition}
	Suppose that $T=\begin{bmatrix}
		a&b\\
		c&d
	\end{bmatrix}\in U(1,1;\mathbb{H})$. Let $r=a_0-d_0,\ q=b-\overline{c}$ and
	$$
	X_{T}=T+T^{-1}-\frac{Tr(T)}{2}I_{2},\ \ H_T=JX_T.
	$$
	Define
	$$
	\eta(T)=\begin{cases}
		\text{sgn}~r,\ \triangle(T)\leq 0,\\
		0,\ \triangle(T)>0,
	\end{cases}\ \ \kappa(T)=\text{rank}_{\mathbb{H}}\Big(T-\frac{Tr(T)}{4}I_2\Big).
	$$
\end{definition}
By definition, $H_T$ is a quaternionic Hermitian matrix. That is, the right eigenvalues of $H_T$ are all real numbers. Below we give the definition of the inertia index $\text{In}$ of a quaternionic Hermitian matrix $H$.
\begin{definition}
	Suppose that $H$ is a quaternionic Hermitian matrix. Define
	$$
	\text{In}(H)=(n_{+}(H),n_{-}(H),n_{0}(H)),
	$$
	where $n_{+}(H)$, $n_{-}(H)$ and $n_{0}(H)$ denote the numbers of positive, negative and zero right eigenvalues of $H$, respectively. Here, the right eigenvalues are counted with multiplicity.
\end{definition}
$\text{In}(H)$ records only the signs and the numbers of the right eigenvalues, not their specific magnitudes. As defined above, we may obtain
\begin{enumerate}
	\item
	$$
	X_T=\begin{bmatrix}
		r&q\\
		-\overline{q}&-r
	\end{bmatrix},\ \ X_{T}^2=-\triangle(T)I_2,\ \ H_T=\begin{bmatrix}
		r&q\\
		\overline{q}&r
	\end{bmatrix}=H_{T}^{\ast}.
	$$
	\item For any $S\in U(1,1;\mathbb{H})$, if we set $T^{\prime}=STS^{-1}$, then
	$$
	X_{T^{\prime}}=SX_{T}S^{-1},\ \ H_{T^{\prime}}=(S^{-1})^{\ast}H_{T}S^{-1}.
	$$
\end{enumerate}
\begin{definition}
	For a quaternionic Hermitian matrix $H$, denote
	$$
	\Phi_{H}(z)=z^{\ast}Hz\in\mathbb{R}.
	$$
	If $W\subseteq \mathbb{H}^{1,1}$ is a right linear subspace and
	$$
	z^{\ast}Hz>0,\ \ \forall\ 0\neq z\in W,
	$$
	then we say $W$ is $H$-positive definite. Similarly, if $V\subseteq \mathbb{H}^{1,1}$ is a right linear subspace and
	$$
	z^{\ast}Hz<0,\ \ \forall\ 0\neq z\in V,
	$$
	then we say $V$ is $H$-negative definite.
\end{definition}
\begin{lemma}
	Let $T\in U(1,1;\mathbb{H})$ and $H_T$ be defined as above. Then
	\begin{equation}\label{333}
		n_{+}(H_T)=\max\{ \dim_{\mathbb{H}}~W:W \text{is an} ~H_{T}\text{-positive definite subspace}\}.
	\end{equation}
	\begin{equation}\label{344}
		n_{-}(H_T)=\max\{ \dim_{\mathbb{H}}~V:V \text{is an} ~H_{T}\text{-negative definite subspace}\}.
	\end{equation}
\end{lemma}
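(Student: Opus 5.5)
This is Sylvester's law of inertia in its variational (Courant–Fischer) form for a $2\times 2$ quaternionic Hermitian matrix. Since $H=H_T$ is $2\times 2$, the most transparent route is the spectral theorem for quaternionic Hermitian matrices. There exists a quaternionic unitary $U$ (so $U^*U=I_2$) with $U^*H_TU=\operatorname{diag}(\mu_1,\mu_2)$, where $\mu_1,\mu_2\in\mathbb R$ are the right eigenvalues of $H_T$ counted with multiplicity. If one prefers not to quote it, it can be obtained directly from the explicit form $H_T=\begin{bmatrix} r&q\\ \overline q&r\end{bmatrix}$. For $q\neq 0$, the unit vectors $\frac{1}{\sqrt2}(q|q|^{-1},1)^T$ and $\frac{1}{\sqrt2}(-q|q|^{-1},1)^T$ are right eigenvectors with real eigenvalues $r\pm|q|$, and they are orthogonal for the standard form $\boldsymbol{w}^*\boldsymbol{z}$. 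For $q=0$ the matrix is already diagonal.

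Setting $\boldsymbol{z}=U\boldsymbol{y}$, the real quadratic form becomes $\Phi_{H_T}(\boldsymbol{z})=\mu_1|y_1|^2+\mu_2|y_2|^2$. The substitution $\boldsymbol{z}\mapsto U\boldsymbol{y}$ maps right subspaces bijectively to right subspaces of the same quaternionic dimension and preserves definiteness. It therefore suffices to prove \eqref{333} and \eqref{344} for the diagonal form $D=\operatorname{diag}(\mu_1,\mu_2)$.

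For the lower bound in \eqref{333}, take $W$ to be the right span of the coordinate vectors $e_k$ with $\mu_k>0$. Its dimension is $n_+(D)$, and on it $\Phi_D(\boldsymbol{y})=\sum_{\mu_k>0}\mu_k|y_k|^2>0$ for $\boldsymbol{y}\neq 0$. For the upper bound, let $W$ be any $D$-positive definite right subspace, and let $N$ be the right span of the $e_k$ with $\mu_k\le 0$. This $N$ has dimension $2-n_+(D)$, and $\Phi_D\le 0$ on $N$, so $W\cap N=\{0\}$. The dimension formula for right subspaces of $\mathbb H^2$, valid since $\mathbb H$ is a division ring, then gives $\dim W+2-n_+(D)\le 2$, that is, $\dim W\le n_+(D)$. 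Formula \eqref{344} follows by applying the same argument to $-H_T$, using $n_-(H_T)=n_+(-H_T)$.

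The only delicate point is keeping the quaternionic conventions consistent. One must check that the diagonalizing matrix acts on the correct side, so that $\boldsymbol{z}\mapsto U\boldsymbol{y}$ respects right scalar multiplication and hence maps right subspaces to right subspaces. One must also check that $\Phi$ takes real values, which holds because $\overline{y_k}\mu_k y_k=\mu_k|y_k|^2$ when $\mu_k$ is real. Once the explicit eigenvectors above are verified, everything else is routine linear algebra over a division ring.
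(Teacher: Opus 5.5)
Your proof is correct and follows the same route as the paper. The span of the eigenvectors for positive eigenvalues gives the lower bound, and a dimension count against the span of the nonpositive eigendirections (of dimension $2-n_+(H_T)$) gives the upper bound. Your explicit unitary diagonalization by the orthonormal eigenvectors $\frac{1}{\sqrt2}(\pm q|q|^{-1},1)^T$ also makes precise a point the paper uses only tacitly: the cross terms vanish, so $\Phi_{H_T}$ really is positive on the positive eigenspan and nonpositive on $V_{\leq 0}$.
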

\begin{proof}
	First, we compute the two right eigenvalues of $H_T$. If $q=0$, then $H_T=rI_2$. Its two right eigenvalues are both $r$. If $q\neq 0$, then its two right eigenvalues are $\lambda_{+}=r+|q|,\ \lambda_{-}=r-|q|$. Therefore, the right linear subspace spanned by the right eigenvectors corresponding to the positive right eigenvalues of $H_T$ is necessarily $H_{T}$-positive definite, and its dimension is exactly $n_{+}(H_T)$. Hence the right-hand side of equation (\ref{333}) is at least $n_{+}(H_T)$.
	
	On the other hand, the subspace $V_{\leq 0}$ spanned by the right eigenvectors corresponding to the non-positive right eigenvalues satisfies
	$$
	\dim_{\mathbb{H}}V_{\leq 0}=2-n_{+}(H_T),\ \ z^{\ast}H_{T}z\leq 0,\ \ z\in V_{\leq 0}.
	$$
	If there exists a $H_{T}$-positive definite subspace $W$ such that
	$$
	\dim_{\mathbb{H}}W>n_{+}(H_T),
	$$
	then
	$$
	\dim_{\mathbb{H}}(W\cap V_{\leq 0})\geq \dim_{\mathbb{H}}W+\dim_{\mathbb{H}}V_{\leq 0}-2>0.
	$$
	Hence there is a nonzero $z\in W\cap V_{\leq 0}$ satisfying
	$$
	z^{\ast}H_{T}z>0,\ \ z^{\ast}H_{T}z\leq 0,
	$$
	which is a contradiction. Therefore, (\ref{333}) holds. Using the same idea, we can prove (\ref{344}).
\end{proof}
Equations (\ref{333}) and (\ref{344}) apply equally to $H_{T^{\prime}}$.
\begin{theorem}\label{4.8}
	$\text{In}(H_T),\eta(T)$ and $\kappa(T)$ are all similarity invariants in $U(1,1;\mathbb{H})$. Once $\triangle(T)$ is fixed, $\text{In}(H_T)$ and $\eta(T)$ determine each other.
\end{theorem}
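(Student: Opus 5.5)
The plan is to handle the three invariants separately, using the two facts recorded after the definition of $H_T$: if $T'=STS^{-1}$ with $S\in U(1,1;\mathbb{H})$, then $X_{T'}=SX_TS^{-1}$ and $H_{T'}=(S^{-1})^{\ast}H_TS^{-1}$.

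\emph{Invariance of $\text{In}(H_T)$.} Since $H_{T'}$ is a congruence transform of $H_T$ by the invertible matrix $S^{-1}$, the right linear map $z\mapsto S^{-1}z$ carries $H_{T'}$-definite subspaces to $H_T$-definite subspaces of the same quaternionic dimension, and $S$ carries them back. Indeed, $z^{\ast}H_{T'}z=(S^{-1}z)^{\ast}H_T(S^{-1}z)$. By the preceding lemma, (\ref{333}) and (\ref{344}), this gives $n_\pm(H_{T'})=n_\pm(H_T)$. Then $n_0=2-n_+-n_-$ is also preserved.

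\emph{Invariance of $\kappa(T)$.} $\text{Tr}$ is a similarity invariant by the cited lemma of \cite{YHF}, so
$$T'-\tfrac{Tr(T')}{4}I_2=S\Big(T-\tfrac{Tr(T)}{4}I_2\Big)S^{-1}.$$
Quaternionic rank, i.e.\ the dimension of the right column space, is preserved under multiplication by invertible matrices on either side. Hence $\kappa(T')=\kappa(T)$.

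\emph{Relation between $\text{In}(H_T)$ and $\eta(T)$.} With $r=a_0-d_0$ and $q=b-\overline{c}$, the eigenvalues of $H_T$ are $r\pm|q|$, and $\triangle(T)=|q|^2-r^2=(|q|-r)(|q|+r)$. This product is minus the product of the two eigenvalues. I will split by the sign of $\triangle(T)$.
\begin{itemize}
\item If $\triangle(T)>0$, the eigenvalues have opposite signs, so $\text{In}(H_T)=(1,1,0)$ and $\eta(T)=0$.
\item If $\triangle(T)<0$, then $|r|>|q|$, so both eigenvalues have the sign of $r$. Thus $\text{In}=(2,0,0)$ when $\eta=1$ and $\text{In}=(0,2,0)$ when $\eta=-1$.
\item If $\triangle(T)=0$, then $|q|=|r|$. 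Either $r=q=0$, giving $\text{In}=(0,0,2)$ and $\eta=0$, or $r\neq 0$, where the eigenvalues are $2r$ and $0$, giving $\text{In}=(1,0,1)$ for $\eta=1$ and $(0,1,1)$ for $\eta=-1$.
\end{itemize}
In every case the correspondence is a bijection once $\triangle(T)$ is fixed, so $\text{In}(H_T)$ and $\eta(T)$ determine each other.

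\emph{Invariance of $\eta(T)$.} This follows from the previous two parts. $\triangle(T)$ is invariant by the \cite{YHF} lemma, and $\eta(T)$ is a function of $\triangle(T)$ and $\text{In}(H_T)$, both of which are invariant. The main point needing care is the degenerate case $\triangle=0$, where $\eta$ must be read off from whether $n_+$ or $n_-$ equals $1$.

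I expect the main obstacle to be justifying the transformation rule $H_{T'}=(S^{-1})^{\ast}H_TS^{-1}$. It follows from $JS=(S^{-1})^{\ast}J$, which is equivalent to $S^{\ast}JS=J$, together with $X_{T'}=SX_TS^{-1}$; the latter uses $(STS^{-1})^{-1}=ST^{-1}S^{-1}$ and invariance of $\text{Tr}$. I would state this derivation explicitly.
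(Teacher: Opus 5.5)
Your proposal is correct and follows the paper's proof essentially step for step. You prove invariance of $n_\pm(H_T)$ via the congruence $H_{T'}=(S^{-1})^{\ast}H_TS^{-1}$ and the maximal-dimension characterization of definite subspaces, prove invariance of $\kappa$ via $T'-\frac{Tr(T)}{4}I_2=S\big(T-\frac{Tr(T)}{4}I_2\big)S^{-1}$, and use the same sign table linking $\text{In}(H_T)$ and $\eta(T)$ at fixed $\triangle(T)$ to deduce invariance of $\eta$. The only differences are cosmetic: you get $n_0$ as $2-n_+-n_-$ rather than from $\ker H_{T'}=S(\ker H_T)$, and you derive the transformation rule for $H_T$ explicitly from $JS=(S^{-1})^{\ast}J$, which the paper simply records.
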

\begin{proof}
	First we prove that if $T^{\prime}=STS^{-1}$, then
	$$
	\text{In}(H_T)=\text{In}(H_{T^{\prime}}).
	$$
	For any $z\in\mathbb{H}^{1,1}$,
	$$
	z^{\ast}H_{T^{\prime}}z=(S^{-1}z)^{\ast}H_{T}(S^{-1}z).
	$$
	Therefore, $W$ is a $H_{T^{\prime}}$-positive definite subspace if and only if $S^{-1}W$ is a $H_{T}$-positive definite subspace. Since $S$ is invertible, this correspondence preserves the quaternionic dimension. By (\ref{333}),
	$$
	n_{+}(H_T)=n_{+}(H_{T^{\prime}}).
	$$
	Similarly, $n_{-}(H_T)=n_{-}(H_{T^{\prime}})$. For the zero inertia index, it follows from the invertibility of $(S^{-1})^{\ast}$,
	$$
	H_{T^{\prime}}z=0~\Leftrightarrow~H_{T}S^{-1}z=0.
	$$
	Therefore,
	$$
	\ker H_{T^{\prime}}=S(\ker H_{T}),
	$$
	consequently,
	$$
	n_{0}(H_{T^{\prime}})=\dim_{\mathbb{H}}\ker H_{T^{\prime}}=\dim_{\mathbb{H}}~\ker H_{T}=n_{0}(H_{T}).
	$$
	From the above, $\text{In}(H_T)=\text{In}(H_{T^{\prime}})$. Next we prove the similar invariance of $\eta(T)$. We further discuss the right eigenvalues of $H_T$ by cases.
	\begin{enumerate}
		\item [1.]$\triangle(T)<0$. In this case, $|q|<|r|$. Therefore, both right eigenvalues are nonzero and have the same sign as $r$.
		\item [2.]$\triangle(T)>0$. In this case, $|q|>|r|$. Therefore,
		$$
		\lambda_{+}=r+|q|>0,\ \ \lambda_{-}=r-|q|<0.
		$$
		\item [3.]$\triangle(T)=0$. In this case, $|q|=|r|$. The two right eigenvalues are $0$ and $2r$. Therefore, one of the right eigenvalues has the same sign as $r$.
	\end{enumerate}
	This yields the following table.

	\begin{table}[htbp]
		\centering
		\label{tab:inertia}
		\renewcommand{\arraystretch}{1.2}
		\begin{tabular}{|c|c|c|c|}
			\hline
			\multicolumn{2}{|c|}{}
			& $\operatorname{In}(H_T)$ & $\eta(T)$ \\
			\hline
			
			\multirow{2}{*}{$\triangle(T)<0$}
			& $r>0$ & $(2,0,0)$ & $1$ \\
			\cline{2-4}
			& $r<0$ & $(0,2,0)$ & $-1$ \\
			\hline
			
			$\triangle(T)>0$
			&  $\forall \ r$ & $(1,1,0)$ & $0$ \\
			\hline
			
			\multirow{3}{*}{$\triangle(T)=0$}
			& $r>0$ & $(1,0,1)$ & $1$ \\
			\cline{2-4}
			& $r<0$ & $(0,1,1)$ & $-1$ \\
			\cline{2-4}
			& $r=0$ & $(0,0,2)$ & $0$ \\
			\hline
		\end{tabular}
	\end{table}

	Therefore, under the condition that $\triangle(T)$ is fixed, $\text{In}(H_T)$ and $\eta(T)$ determine each other. This further shows that $\eta(T)$ is invariant under similarity in $U(1,1;\mathbb{H})$.
	
	Finally we prove that $\kappa(T)$ is a similarity invariant in $U(1,1;\mathbb{H})$. It follows from $T^{\prime}=STS^{-1}$ and $Tr(T^{\prime})=Tr(T)$ that
	$$
	T^{\prime}-\frac{Tr(T)}{4}I_2=S(T-\frac{Tr(T)}{4}I_2)S^{-1}.
	$$
	Since $S^{-1}$ maps $\mathbb{H}^{1,1}$ bijectively onto itself, the above equation gives the corresponding relation between the image spaces
	$$
	\text{Im}(T^{\prime}-\frac{Tr(T)}{4}I_2)=S\Big(\text{Im}(T-\frac{Tr(T)}{4}I_2)\Big).
	$$
	The invertible right linear map $S$ preserves the quaternionic dimension, hence
	\begin{align*}
		\kappa(T^{\prime})&=\dim_{\mathbb{H}}\text{Im}(T^{\prime}-\frac{Tr(T)}{4}I_2)\\
		&=\dim_{\mathbb{H}}\text{Im}(T-\frac{Tr(T)}{4}I_2)\\
		&=\kappa(T).
	\end{align*}
	That is, $\kappa(STS^{-1})=\kappa(T)$. This concludes the proof of all assertions of the theorem.
\end{proof}

For $T\in U(1,1;\mathbb{H})$, denote $(Tr(T),\triangle(T),\eta(T),\kappa(T))$ as $\mathcal{I}(T)$.
\begin{theorem}
	Suppose that $T_1=\begin{bmatrix}
		a&b\\
		c&d
	\end{bmatrix},T_2=\begin{bmatrix}
		e&f\\
		g&h
	\end{bmatrix}\in U(1,1;\mathbb{H})$. Then $T_1$ is similar to $T_2$ in $U(1,1;\mathbb{H})$ if and only if
	$$
	\mathcal{I}(T_1)=\mathcal{I}(T_2).
	$$
\end{theorem}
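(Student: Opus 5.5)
The forward direction is already available. $Tr(T)$ and $\triangle(T)$ are similarity invariants by the lemma quoted from \cite{YHF}, and $\eta(T)$ and $\kappa(T)$ are invariants by Theorem \ref{4.8}. So $T_1\sim T_2$ in $U(1,1;\mathbb{H})$ forces $\mathcal{I}(T_1)=\mathcal{I}(T_2)$. The substance lies in the converse.

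For the converse, the plan is to reduce both matrices to the complex normal forms of Sections 2--3 and of \cite{YHF}. Then we check that $\mathcal{I}$ pins down a unique normal form up to explicit $U(1,1;\mathbb{H})$-similarities. The case split follows the sign of $\triangle$, which is common to $T_1$ and $T_2$.

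\textbf{Case $\triangle>0$.} Both matrices are compound loxodromic, or simple loxodromic in the degenerate situation $b=\overline{c}$. By Theorems \ref{2} and \ref{4}, each is similar to a complex matrix. The right spectrum is determined by $(Tr,\triangle)$ through Lemma \ref{3}, so the eigenvalue data $r=|t_+|$ and the class of $e^{\ii\theta}$ coincide for $T_1$ and $T_2$. We then need a unified loxodromic normal form $e^{\ii\theta}\begin{bmatrix}\cosh t&\sinh t\\ \sinh t&\cosh t\end{bmatrix}$, with $\theta$ determined up to sign. The simple loxodromic form of Theorem \ref{2} is of this type after Lemma \ref{21}-style adjustments. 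The sign ambiguity $\theta\mapsto-\theta$ is removed by conjugating with $\jj I_2\in U(1,1;\mathbb{H})$, since $\jj e^{\ii\theta}\jj^{-1}=e^{-\ii\theta}$ and real entries are fixed.

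\textbf{Case $\triangle<0$.} Both matrices are elliptic. By \cite{YHF} and Lemma \ref{111}, each is similar to $\operatorname{diag}(\xi,\varphi)$ with $|\xi|=|\varphi|=1$. Lemma \ref{3} determines the classes $\{[\xi],[\varphi]\}$ from $(Tr,\triangle)$, but not which class sits in the positive ($J$-norm $+1$) slot. That is exactly what $\eta=\operatorname{sgn}(a_0-d_0)$ records. Knowing the classes and their order, a diagonal unitary $\operatorname{diag}(p,s)$ with $|p|=|s|=1$ conjugates one diagonal form to the other. Here I would compute $H_T$ for the diagonal form to confirm that $\eta$ is $\operatorname{sgn}(\xi_0-\varphi_0)$. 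Simple elliptic with $a_0=d_0$ gives $\eta=0$, and its two classes coincide.

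\textbf{Case $\triangle=0$.} The subcases are distinguished using $\eta$ and $\kappa$. If $\eta=0$, then $r=0$ and $q=0$, so $b=\overline{c}$ and $a_0=d_0$. Then $\kappa$ separates the scalar-type case $T=\pm I_2$ from the simple parabolic case, and the simple elliptic case with $b=\overline{c}=0$, $a_0=d_0$ from the simple parabolic one. In the simple parabolic case Theorem \ref{233} gives $\begin{bmatrix}d_0+\ii|c|&|c|\\ |c|&d_0-\ii|c|\end{bmatrix}$, and Lemma \ref{21} with $a=d_0=\pm1$ makes any two such forms similar. Here $d_0$ is fixed by $Tr$. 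If $\eta=\pm1$, the matrices are compound parabolic. Theorem \ref{229} gives $\lambda\begin{bmatrix}1+\ii u&u\\ u&1-\ii u\end{bmatrix}$. The class of $\lambda$ is fixed by $Tr$ via Lemma \ref{3}, and the parameter $u>0$ can be rescaled by a diagonal hyperbolic conjugation analogous to Lemma \ref{21}. The remaining choice, the side of $\mathbb{C}$ on which $\lambda$ lies relative to the $\ii$ in the Jordan coefficient $\lambda u\ii$, must be matched with $\eta$.

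\textbf{Main obstacle.} The hardest step is this last orientation issue in the compound parabolic case. I would compute $\eta$ directly on the normal form $B$. For $\lambda=e^{\ii\phi}$, $r=\operatorname{Re}(\lambda(1+\ii u))-\operatorname{Re}(\lambda(1-\ii u))=-2u\sin\phi$. So $\eta=\operatorname{sgn}(-\sin\phi)$, and $\eta$ fixes $\lambda$ exactly rather than merely up to conjugation. This shows that two compound parabolic matrices with equal $\mathcal{I}$ have the same $\lambda$. The proof then finishes by exhibiting a real-diagonal $U(1,1)$-type conjugator matching $u_1$ to $u_2$.
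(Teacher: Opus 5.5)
\textbf{Gap: the simple types are filed under the wrong sign of $\triangle$, so one branch of the converse is never treated.} If $b=\overline{c}$, then $\triangle(T)=-(a_0-d_0)^2\le 0$. If moreover $b=\overline{c}\neq 0$, the relation $a\overline{c}=b\overline{d}$ gives $b^{-1}ab=\overline{d}$, hence $a_0=d_0$ and $\triangle(T)=0$. So every simple elliptic, simple parabolic and simple loxodromic element has $\triangle=0$ and $\eta=0$.

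Two consequences follow. Your ``simple loxodromic in the degenerate situation $b=\overline{c}$'' under $\triangle>0$ is empty, and so is your ``simple elliptic with $a_0=d_0$'' under $\triangle<0$ (indeed $\eta=\operatorname{sgn}(a_0-d_0)\neq 0$ whenever $\triangle<0$). Conversely, in your branch $\triangle=0$, $\eta=0$ you list only $\pm I_2$, the simple parabolic case, and the simple elliptic case with $b=\overline{c}=0$. The matrices with $b=\overline{c}\neq 0$ and $d_0^2\neq 1$ are never compared. These have $\kappa=2$; they are simple loxodromic when $|Tr|>4$ and simple elliptic when $|Tr|<4$.

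The repair uses pieces you already have, moved to the right branch. There $a_0=d_0=Tr/4$ is common to $T_1$ and $T_2$. It is $Tr$, not $\kappa$, that separates the elliptic and loxodromic subcases. Two simple loxodromic matrices are matched by Theorem \ref{2} and Lemma \ref{21} with $a=d_0$. Two simple elliptic ones are handled by the $U(1,1;\mathbb{H})$-diagonalization of \cite{YHF} to $\operatorname{diag}(\lambda,\mu)$ with $\lambda\sim\mu$, followed by conjugation by $\operatorname{diag}(p,s)$ with $|p|=|s|=1$. This is how the paper organizes its Case 3(2)(c).

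Everything else in your plan is sound, and your compound parabolic step is in fact sharper than the paper's. The paper uses only $\eta\neq 0$ there. It ``chooses'' a complex representative $\lambda$ and asserts that $T_1$ and $T_2$ are similar to the normal form $B$ of Theorem \ref{229} with the same $\lambda$ and with $u,v>0$. But once $u>0$ is imposed, the construction preceding Theorem \ref{229} gives no freedom between $\lambda$ and $\overline{\lambda}$; the choice is dictated by $T$.

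Your computation $a_0-d_0=-2u\sin\phi$ on $B$, combined with the invariance of $\eta$ (Theorem \ref{4.8}), is exactly what shows that $Tr$ and $\eta$ together fix $\lambda=e^{\ii\phi}$. After that, $u$ is matched with $v$ either by Lemma \ref{21} (whose conjugator has complex entries and so commutes with $\lambda$) or by $\operatorname{diag}(\varepsilon,\varepsilon^{-1})$, $\varepsilon>0$, in the $H_0$-frame of the chain.

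Your use of $\eta$ on the diagonal forms when $\triangle<0$ is also correct. So is your use of $\jj I_2$ to remove the ambiguity $\theta\mapsto-\theta$ when $\triangle>0$; the paper handles that point simply by the choice of $q$ in Theorem \ref{4}.
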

\begin{proof}
	The necessity follows from Theorem \ref{4.8}. We aim to prove the sufficiency.
	
	It follows from $\mathcal{I}(T_1)=\mathcal{I}(T_2)$ that
	$$
	Tr(T_1)=Tr(T_2),\ \triangle(T_1)=\triangle(T_2),\ \eta(T_1)=\eta(T_2),\ \kappa(T_1)=\kappa(T_2).
	$$
	We discuss by cases.
	
	\textbf{Case 1}: $\triangle(T_1)=\triangle(T_2)>0$. Then $g_{T_1}$ and $g_{T_2}$ are both compound loxodromic. Since $Tr(T_1)=Tr(T_2)$, by Lemma \ref{3},
	$$
	\sigma_{R}(T_1)=\sigma_{R}(T_2).
	$$
	Let
	$$
	t_{+}\in \sigma_{R}(T_1)=\sigma_{R}(T_2), \ \ |t_{+}|>1.
	$$
	Then there exists a nonzero quaternion $q$ such that
	$$
	qt_{+}q^{-1}=re^{\boldsymbol{i}\theta},\ \ r=|t_{+}|>1.
	$$
	Set
	$$
	\cosh t=\frac{r+r^{-1}}{2}, \ \ \sinh t=\frac{r-r^{-1}}{2}.
	$$
	Then, it follows from Theorem \ref{4} that $T_1$ and $T_2$ are similar to the same matrix $e^{\boldsymbol{i}\theta}\begin{bmatrix}
		\cosh t&\sinh t\\
		\sinh t&\cosh t
	\end{bmatrix}$  simultaneously in $U(1,1;\mathbb{H})$, which shows that they have the same complex representative in $U(1,1;\mathbb{H})$ under similarity, and therefore they are similar in $U(1,1;\mathbb{H})$.

	\textbf{Case 2}: $\triangle(T_1)=\triangle(T_2)<0$. Then $g_{T_1}$ and $g_{T_2}$ are both compound elliptic. 
	Set
	\[
	m=a_0+d_0=e_0+h_0
	=\frac{\operatorname{Tr}(T_1)}{2}
	=\frac{\operatorname{Tr}(T_2)}{2},
	\]
	and define
	\[
	x_{\pm}
	=\frac{1}{2}\left(m\pm\sqrt{-\triangle(T_1)}\right).
	\]
	Since $\triangle(T_1)<0$, we have $x_+>x_-$. By Lemma \ref{3},
	the right spectra of $T_1$ and $T_2$ consist of the same two
	similarity classes
	\[
	\sigma_R(T_1)=\sigma_R(T_2)
	=[\lambda_+]\cup[\lambda_-],
	\]
	where we may choose the complex representatives
	\[
	\lambda_{\pm}
	=x_{\pm}+\ii\sqrt{1-x_{\pm}^{2}}.
	\]
	Here $[\lambda]$ denotes the quaternionic similarity class of
	$\lambda$.
	
	Let
	\[
	r_1=a_0-d_0,\qquad r_2=e_0-h_0.
	\]
	Because
	\[
	\triangle(T_1)
	=|b-\bar c|^2-r_1^2<0,
	\qquad
	\triangle(T_2)
	=|f-\bar g|^2-r_2^2<0,
	\]
	both $r_1$ and $r_2$ are nonzero. Therefore,
	\[
	\eta(T_k)=\operatorname{sgn}(r_k),
	\qquad k=1,2.
	\]
	The equality $\eta(T_1)=\eta(T_2)$ implies only that
	\[
	\operatorname{sgn}(r_1)=\operatorname{sgn}(r_2),
	\]
	which is exactly the information needed to determine the order of
	the two spectral classes in the $U(1,1;\mathbb H)$-diagonal form.
	
	Indeed, by the ordered diagonalization obtained in Case 3 of
	Theorem 3.1 in \cite{YHF} (see also Remark 3.3 therein), for each
	$k=1,2$ there exists $S_k\in U(1,1;\mathbb H)$ such that
	\[
	S_kT_kS_k^{-1}
	=
	\begin{cases}
		\operatorname{diag}(\lambda_+,\lambda_-),
		& r_k>0,\\[2mm]
		\operatorname{diag}(\lambda_-,\lambda_+),
		& r_k<0.
	\end{cases}
	\]
	Since $r_1$ and $r_2$ have the same sign, the ordered diagonal
	matrices obtained from $T_1$ and $T_2$ are identical. Denote this
	common diagonal matrix by $D$. Then
	\[
	S_1T_1S_1^{-1}=D=S_2T_2S_2^{-1}.
	\]
	Consequently,
	\[
	T_2=(S_2^{-1}S_1)T_1(S_2^{-1}S_1)^{-1}.
	\]
	Since $S_2^{-1}S_1\in U(1,1;\mathbb H)$, it follows that $T_1$
	and $T_2$ are similar in $U(1,1;\mathbb H)$.
	
	\textbf{Case 3}: $\triangle(T_1)=\triangle(T_2)=0$. Notice that $\eta(T_1)=\eta(T_2)$.
	\begin{enumerate}
		\item If $\eta(T_1)=\eta(T_2)\neq 0$, then $g_{T_1}$ and $g_{T_2}$ are both compound parabolic. Since $Tr(T_1)=Tr(T_2)$, it follows from Lemma \ref{3} that
		$$
		\sigma_{R}(T_1)=\sigma_{R}(T_2).
		$$
		We choose a complex representative element $\lambda\in\sigma_{R}(T_1)=\sigma_{R}(T_2)$. Then by Theorem \ref{229}, $T_1$ and $T_2$ are similar to matrices
		$$
		T_u=\lambda\begin{bmatrix}
			1+\boldsymbol{i}u&u\\
			u&1-\boldsymbol{i}u
		\end{bmatrix},\ \  T_v=\lambda\begin{bmatrix}
			1+\boldsymbol{i}v&v\\
			v&1-\boldsymbol{i}v
		\end{bmatrix},\ \ u,v>0
		$$
		respectively in $U(1,1;\mathbb{H})$. By the conclusion of Lemma \ref{21}, $T_u$ is similar to $T_v$ in $U(1,1;\mathbb{H})$, and  it follows that $T_1$ and $T_2$ are similar in $U(1,1;\mathbb{H})$.
		
		\item If $\eta(T_1)=\eta(T_2)=0$, we next consider the value of $\kappa(T_1)=\kappa(T_2)$.
		\begin{enumerate}
			\item [(a)]If $\kappa(T_1)=\kappa(T_2)=0$, then $T_1=T_2=\pm I_2$. Thus they are trivially similar in $U(1,1;\mathbb{H})$.
			\item [(b)]If $\kappa(T_1)=\kappa(T_2)=1$, then neither $T_1$ nor $T_2$ is $\pm I_2$ and $Tr(T_1)=Tr(T_2)=\pm 4$. That is,
			$$
			a_0=d_0=e_0=h_0=\pm 1.
			$$
			In other words, $g_{T_1}$ and $g_{T_2}$ are both simple parabolic. Furthermore, Theorem \ref{233} shows that $T_1$ and $T_2$ are similar to matrices
			$$
			T_c=\begin{bmatrix}
				d_{0}+\ii|c| & |c| \\
				|c| & d_{0}-\ii|c|
			\end{bmatrix},\ \ T_g=\begin{bmatrix}
				d_{0}+\ii|g| & |g| \\
				|g| & d_{0}-\ii|g|
			\end{bmatrix},\ \ |c|,|g|>0
			$$
			respectively in $U(1,1;\mathbb{H})$. By the conclusion of Lemma \ref{21}, $T_c$ is similar to $T_g$ in $U(1,1;\mathbb{H})$, and hence $T_1$ and $T_2$ are similar in $U(1,1;\mathbb{H})$.
			\item [(c)] If $\kappa(T_1)=\kappa(T_2)=2$, then neither $T_1$ nor $T_2$ is $\pm I_2$. Now we focus on the range of $Tr(T_1)=Tr(T_2)$.
			\begin{enumerate}
				\item [(i)]If $|Tr(T_1)|=|Tr(T_2)|>4$, then $g_{T_1}$ and $g_{T_2}$ are both simple loxodromic. At this point, we have
				$$
				a_0=d_0=e_0=h_0,\ \ |a_0|>1.
				$$
				
				Theorem \ref{2} shows that $T_1$ and $T_2$ are similar to matrices
				$$
				T_b=\begin{bmatrix}
					d_{0}+\ii\sqrt{1-d_{0}^{2}+|c|^{2}} & |c| \\
					|c| & d_{0}-\ii\sqrt{1-d_{0}^{2}+|c|^{2}}
				\end{bmatrix}
				$$
				and
				$$
				T_f=\begin{bmatrix}
					h_{0}+\ii\sqrt{1-h_{0}^{2}+|g|^{2}} & |g| \\
					|g| & h_{0}-\ii\sqrt{1-h_{0}^{2}+|g|^{2}}
				\end{bmatrix}
				$$
				respectively in $U(1,1;\mathbb{H})$. By the conclusion of Lemma \ref{21}, $T_b$ is similar to $T_f$ in $U(1,1;\mathbb{H})$, and hence obtained that $T_1$ and $T_2$ are similar in $U(1,1;\mathbb{H})$.
				
				\item [(ii)] If $|Tr(T_1)|=|Tr(T_2)|<4$, then $g_{T_1}$ and $g_{T_2}$ are both simple elliptic. At this point, we have
				$$
				a_0=d_0=e_0=h_0,\ \ |a_0|<1.
				$$
				It follows from Lemma \ref{3} that
				$$
				\sigma_{R}(T_1)=\sigma_{R}(T_2).
				$$

				Lemma \ref{111} shows that $T_1$ and $T_2$ are similar to matrices
				$$
				T_{\lambda}=\begin{bmatrix}
					\lambda& 0\\
					0 & \lambda
				\end{bmatrix}
				$$
				and
				$$
				T_{\alpha}=\begin{bmatrix}
					\alpha & 0 \\
					0 & \alpha
				\end{bmatrix}
				$$
				respectively in $U(1,1;\mathbb{H})$. Since the common right spectrum $\sigma_{R}(T_1)=\sigma_{R}(T_2)$ consists of a single similarity
				class in this case, we have $\lambda\sim\alpha$. Then it is easy to verify that $T_{\lambda}$ is similar to $T_{\alpha}$ in $U(1,1;\mathbb{H})$, and finally $T_1$ and $T_2$ are similar in $U(1,1;\mathbb{H})$.
			\end{enumerate}
		\end{enumerate}
	\end{enumerate}
\end{proof}
\section{Recurrence formula of $\triangle(T^n)$ and classification of the iterates $g_{T^n}$}
In this section, we obtain the recurrence formula of $\triangle(T^n)$ by using Chebyshev polynomials. Furthermore, we provide a complete classification of the M\"{o}bius transformation $g_{T^n}$ by combining $\triangle(T^n)$ with several other quantities.
\subsection{Recurrence formula of $\triangle(T^n)$}
\begin{lemma}[\cite{YHF}]\label{88}
	For $T\in U(1,1;\mathbb{H})$, the following identity is satisfied
	\begin{equation}\label{888}
		\triangle(T)=\frac{1}{4}(Tr (T))^{2}-\frac{1}{2}Tr(T^{2})-2.
	\end{equation}
\end{lemma}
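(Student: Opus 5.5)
The plan is a direct computation: write $T^2$ entrywise, take real parts, and reduce the result to $\triangle(T)$ using the defining relations (\ref{1.1}). Write $T=\begin{bmatrix} a&b\\ c&d\end{bmatrix}$. Then the diagonal entries of $T^2$ are $a^2+bc$ and $cb+d^2$. Hence
\[
Tr(T^2)=2\operatorname{Re}(a^2)+2\operatorname{Re}(d^2)+2\operatorname{Re}(bc)+2\operatorname{Re}(cb).
\]
We have $\operatorname{Re}(bc)=\operatorname{Re}(cb)$, and for any quaternion $\operatorname{Re}(a^2)=2a_0^2-|a|^2$. Therefore
\[
Tr(T^2)=4a_0^2+4d_0^2-2|a|^2-2|d|^2+4\operatorname{Re}(bc).
\]
Also $Tr(T)=2(a_0+d_0)$, so $\frac14(Tr(T))^2=(a_0+d_0)^2$.

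Next substitute into the right-hand side of (\ref{888}):
\[
(a_0+d_0)^2-2a_0^2-2d_0^2+|a|^2+|d|^2-2\operatorname{Re}(bc)-2
=-(a_0-d_0)^2+|a|^2+|d|^2-2-2\operatorname{Re}(bc).
\]
The conditions (\ref{1.1}) give $|a|^2=1+|c|^2$ and $|d|^2=1+|b|^2$, so $|a|^2+|d|^2-2=|b|^2+|c|^2$. The right-hand side therefore becomes
\[
|b|^2+|c|^2-2\operatorname{Re}(bc)-(a_0-d_0)^2 .
\]

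It remains to recognize $|b-\overline{c}|^2$. Expanding,
\[
|b-\overline{c}|^2=|b|^2+|c|^2-2\operatorname{Re}(b\,\overline{\overline{c}})=|b|^2+|c|^2-2\operatorname{Re}(bc).
\]
This yields exactly $\triangle(T)=|b-\overline{c}|^2-(a_0-d_0)^2$.

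The only point needing care is the use of $|d|^2=1+|b|^2$. Condition (\ref{1.1}) states $|a|=|d|$, $|b|=|c|$ and $|a|^2-|c|^2=1$, and these together give $|d|^2-|b|^2=1$. Alternatively, the same identity can be read off from the $(2,2)$ entry of $T^*JT=J$. No other obstacle is expected; the identity holds uniformly, with no case split over the six types.
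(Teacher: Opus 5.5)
Your proof is correct and complete. The individual steps all check out:
\begin{itemize}
\item the diagonal entries $a^2+bc$ and $cb+d^2$ of $T^2$;
\item the facts $\operatorname{Re}(bc)=\operatorname{Re}(cb)$ and $\operatorname{Re}(x^2)=2x_0^2-|x|^2$;
\item the simplification $(a_0+d_0)^2-2a_0^2-2d_0^2=-(a_0-d_0)^2$;
\item the expansion $|b-\overline{c}|^2=|b|^2+|c|^2-2\operatorname{Re}(bc)$.
\end{itemize}
Together these bring the right-hand side exactly to $\triangle(T)$, with no case split needed.

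The paper itself does not prove this lemma; it quotes it from \cite{YHF}. So there is no in-paper argument to compare with, and your entrywise verification serves as a self-contained proof. It also shows how little of the group structure is used. Only the diagonal relations $|a|^2-|c|^2=1$ and $|d|^2-|b|^2=1$ of $T^{*}JT=J$ enter, and in fact only their sum, $|a|^2+|d|^2-2=|b|^2+|c|^2$. The off-diagonal conditions $\overline{a}b=\overline{c}d$ and $a\overline{c}=b\overline{d}$ in (\ref{1.1}) play no role.
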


\begin{lemma}[Cayley-Hamilton Theorem \cite{book}]\label{25}
	For a polynomial $p(x)=\alpha_{0}+\alpha_{1}x+\cdots+\alpha_{k}x^{k}$ with complex coefficients, and a complex square matrix $A$, the corresponding matrix polynomial is defined to be
	$$
	p(A)=\alpha_{0}I+\alpha_{1}A+\cdots+\alpha_{k}A^{k}.
	$$
	Moreover, if $p(x)$ is the characteristic polynomial for $A$, then $p(A)=0$.
\end{lemma}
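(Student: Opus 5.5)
The first part of the statement, defining $p(A)$, is only a definition. The one thing to record is that $p\mapsto p(A)$ is a ring homomorphism from $\mathbb{C}[x]$ to $M_k(\mathbb{C})$, because all powers of $A$ commute with each other. The content to prove is $p(A)=0$ when $p(x)=\det(xI-A)$. I will use the classical adjugate argument, since it needs no diagonalizability or density assumptions. Let $A$ be $n\times n$ and write $p(x)=\det(xI-A)=\sum_{k=0}^{n}\alpha_k x^k$ with $\alpha_n=1$.

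\textbf{Step 1.} Consider the adjugate $B(x)=\operatorname{adj}(xI-A)$. Its entries are $(n-1)\times(n-1)$ minors of $xI-A$, so they are polynomials in $x$ of degree at most $n-1$. Hence we can write
\[
B(x)=\sum_{k=0}^{n-1}B_k x^k,\qquad B_k\in M_n(\mathbb{C}).
\]
The adjugate identity, valid over any commutative ring and here applied over $\mathbb{C}[x]$, gives
\[
(xI-A)B(x)=p(x)I .
\]

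\textbf{Step 2.} Both sides are polynomials in the indeterminate $x$ with matrix coefficients, so we may compare coefficients of $x^k$:
\[
-AB_0=\alpha_0 I,\qquad B_{k-1}-AB_k=\alpha_k I\ (1\le k\le n-1),\qquad B_{n-1}=\alpha_n I .
\]

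\textbf{Step 3.} Multiply the $x^k$-equation on the left by $A^k$ and sum over $k=0,\dots,n$. The left-hand side becomes
\[
-AB_0+\sum_{k=1}^{n-1}\bigl(A^kB_{k-1}-A^{k+1}B_k\bigr)+A^nB_{n-1}.
\]
This sum telescopes to $0$. The right-hand side is $\sum_k\alpha_kA^k=p(A)$. Hence $p(A)=0$.

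\textbf{Main obstacle.} The only delicate point is avoiding the tempting but invalid move of substituting $x=A$ directly into $(xI-A)B(x)=p(x)I$. Evaluation at a matrix is not a homomorphism on $M_n(\mathbb{C})[x]$, because the coefficients $B_k$ need not commute with $A$ a priori. Working coefficientwise and multiplying by $A^k$ on the left, as in Step 3, sidesteps this entirely: the telescoping uses only the associativity of matrix multiplication.

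An alternative I would keep in reserve is Schur triangularization. Write $A=UTU^{*}$ with $T$ upper triangular and diagonal $\lambda_1,\dots,\lambda_n$, so that $p(A)=U\prod_i(T-\lambda_iI)U^{*}$. Then show by induction on $j$ that $\prod_{i\le j}(T-\lambda_iI)$ kills the first $j$ standard basis vectors; taking $j=n$ gives $p(T)=0$ and hence $p(A)=0$.
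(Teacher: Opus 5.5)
Your adjugate proof is correct and complete. The coefficient comparison in Step~2 is right, including the boundary equations $-AB_0=\alpha_0 I$ and $B_{n-1}=I$, and the left-multiplied sum in Step~3 telescopes to $0$ exactly as you say.

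The paper gives no proof of this lemma. It quotes Cayley--Hamilton from a textbook and uses it only for the $4\times 4$ complex matrix $\chi(T)$. It then pulls the identity back to $T$, because $\chi$ is an injective homomorphism of real algebras and the coefficients $\tau,\beta$ of $p_T$ are real. So the only meaningful comparison is with how the result is used.

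Your argument works over any commutative ring. That is also why it cannot be run on the quaternionic matrix $T$ itself: the adjugate identity requires commuting entries, which is why the detour through $\chi$ is needed. Your Schur-triangularization alternative is also valid, but it relies on $\mathbb{C}$ being algebraically closed, so it is the less general of the two.

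One side remark on your ``main obstacle''. Substituting $x=A$ into $(xI-A)B(x)=p(x)I$ is legitimate if powers of $A$ are placed on the left. Left evaluation is multiplicative on $F(x)G(x)$ whenever the coefficients of the left factor commute with $A$, and $I$ and $-A$ do. Your Step~3 is exactly the verification of this special case. Whether the $B_k$ commute with $A$ would matter only for right evaluation.

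Finally, at the start you write $M_k(\mathbb{C})$ where you mean $M_n(\mathbb{C})$; $k$ already denotes the degree of $p$.
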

The Chebyshev polynomials of the first kind are
characterized by $C_{n}(\cos\theta)=\cos(n\theta)$. Or equivalently,
$$
C_{0}(t)=1,\ \ C_{1}(t)=t,\ \ C_{n+1}(t)=2tC_{n}(t)-C_{n-1}(t).
$$

For $A\in M_{n}(\mathbb{H})$, let $A$ be represented as $A=A_1+A_{2}\boldsymbol{j}$, where $A_1,A_2\in M_{n}(\mathbb{C})$. The complex representation of $A$ is defined to be
$$
\chi(A)=\begin{bmatrix}
	A_1&A_2\\
	-\overline{A_2}&\overline{A_1}
\end{bmatrix}.
$$
Then $\det_{\mathbb{C}}(\chi(A))=(\det_{\mathbb{H}}(A))^{2}$. The properties of $\chi(A)$ and $\det_{\mathbb{C}}(\chi(A))$ can be seen in \cite{FZZ1997}.

\begin{lemma}[\cite{FZZ1997}]\label{38}
	Let $A$ and $B$ be any $n\times n$ complex matrices and $C=\begin{bmatrix}
		A&B\\
		-\overline{B}&\overline{A}
	\end{bmatrix}$. Then $\det_{\mathbb{C}}(C)\geq 0$. Furthermore, $C$ is invertible~($\det_{\mathbb{C}}(C)>0$) in $M_{2n}(\mathbb{C})$ if and only if $A+B\boldsymbol{j}$ is invertible in $M_{n}(\mathbb{H})$.
\end{lemma}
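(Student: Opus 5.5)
The statement to be proved is Lemma~\ref{38}. I would reduce it to the structure of the real algebra
\[
\mathcal{Q}=\left\{\begin{bmatrix}A&B\\-\overline{B}&\overline{A}\end{bmatrix}:A,B\in M_n(\mathbb{C})\right\}.
\]
This algebra is exactly $\chi(M_n(\mathbb{H}))$, and $\chi$ is an injective real-algebra homomorphism. So $C=\chi(A+B\jj)$, and $\chi$ is multiplicative with $\chi(I)=I_{2n}$.

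\textbf{Equivalence of invertibility.} If $A+B\jj$ is invertible with inverse $Q$, then $\chi(Q)\chi(A+B\jj)=I_{2n}$, so $C$ is invertible. Conversely, suppose $C$ is invertible. Let
\[
K=\begin{bmatrix}0&I\\-I&0\end{bmatrix},
\]
so that $\mathcal{Q}=\{M: MK=K\overline{M}\}$. If $MK=K\overline{M}$, then $M^{-1}K=K\overline{M^{-1}}$ as well. Hence $C^{-1}\in\mathcal{Q}$, i.e.\ $C^{-1}=\chi(Q)$ for some $Q$. Injectivity of $\chi$ then gives $Q(A+B\jj)=I$, so $A+B\jj$ is invertible.

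\textbf{Nonnegativity of $\det_{\mathbb{C}}C$.} The relation $CK=K\overline{C}$ gives $\det C=\overline{\det C}$, so $\det C$ is real. For the sign I would use a continuity argument. The set $GL_n(\mathbb{H})$ is connected: row reduction by elementary quaternionic matrices reduces any invertible matrix to a diagonal one, each factor can be joined to $I$ inside $GL_n(\mathbb{H})$, and $\mathbb{H}^{*}$ is connected. Therefore $\det\chi$ is a continuous real-valued function on this connected set. It never vanishes there, by the equivalence above, and equals $1$ at $I$. Hence $\det C>0$ whenever $A+B\jj$ is invertible, and $\det C=0$ otherwise. This yields both claims.

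\textbf{Main obstacle.} The step needing care is the connectedness of $GL_n(\mathbb{H})$. Alternatively, one can avoid it by spectral theory. The relation $CK=K\overline{C}$ shows that if $Cx=\mu x$ then $C(K\overline{x})=\overline{\mu}K\overline{x}$, and $K\overline{x}$ is independent of $x$ since $x^{*}K\overline{x}=0$ for every $x$. So the non-real eigenvalues of $C$ come in conjugate pairs. For the real eigenvalues, an induction on a $\mathbb{H}$-invariant splitting shows their algebraic multiplicities are even. Either way, $\det C=\prod\mu\ge 0$. I would present the connectedness route as primary, since it uses only the homomorphism property of $\chi$.
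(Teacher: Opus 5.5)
Your proof is correct, and it cannot follow the paper's own argument because the paper gives none: it quotes the lemma from \cite{FZZ1997}. The usual proof is spectral. One triangularizes $X=A+B\jj$ over $\mathbb{H}$ as $X=UTU^{-1}$, with $T$ upper triangular and complex diagonal entries $t_{11},\dots,t_{nn}$. Then $C=\chi(X)$ is similar to $\chi(T)$. After interleaving rows and columns, $\chi(T)$ is block upper triangular with diagonal blocks $\operatorname{diag}(t_{ii},\overline{t_{ii}})$, so $\det C=\prod_i|t_{ii}|^2$.

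Your route replaces this with algebraic structure plus topology. The characterization $\chi(M_n(\mathbb{H}))=\{M: MK=K\overline{M}\}$ gives closure under inversion, hence the invertibility equivalence, and also $\det C\in\mathbb{R}$. Connectedness of $GL_n(\mathbb{H})$ then fixes the sign. This avoids the existence of right eigenvalues and the quaternionic Schur form, which are the nontrivial inputs of the spectral proof. In exchange it yields only the sign, whereas the spectral proof gives the explicit formula $\det C=\prod|t_{ii}|^2$ and the conjugate-pair structure of the spectrum of $\chi(X)$.

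A few points to tighten:
\begin{itemize}
\item In the elimination step, either avoid row swaps or say why a swap can be joined to $I$ inside $GL_n(\mathbb{H})$. Over a division ring a zero pivot can always be repaired by a transvection. This is exactly where the analogous argument over $\mathbb{R}$ fails; it works here because $\mathbb{H}^{*}$ is connected.
\item Note that $C\chi(Q)=I$ also gives $(A+B\jj)Q=I$, so the inverse is two-sided.
\item In your alternative spectral sketch, the pairing $x\mapsto K\overline{x}$ only matches geometric multiplicities. For the determinant, use instead that $\overline{C}=K^{-1}CK$, so the characteristic polynomial of $C$ has real coefficients. For a real eigenvalue $\mu$, the generalized eigenspace $\ker(C-\mu I)^k$ is invariant under the antilinear map $x\mapsto K\overline{x}$. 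That map squares to $-I$, so the eigenspace has even complex dimension.
\end{itemize}
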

For convenience, we set
$$
\tau=Tr(T),\ \ \sigma=Tr(T^{2}),\ \ \delta=\triangle(T).
$$
Then $\sigma=\frac{\tau^{2}}{2}-2\delta-4$ by (\ref{888}).

\begin{lemma}\label{29}
	Let $T\in U(1,1;\mathbb{H})$. Define $p_{T}(z)=\det(zI_{4}-\chi(T))$. Then
	$$
	p_{T}(z)=z^{4}-\tau z^{3}+\beta z^{2}-\tau z+1,\ \ \beta=\frac{\tau^{2}-\sigma}{2}=\frac{1}{4}\tau^{2}+\delta+2.
	$$
\end{lemma}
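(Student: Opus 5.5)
We compute the characteristic polynomial of $\chi(T)$ through Newton's identities: its coefficients are determined by the traces $\operatorname{tr}\chi(T)^k$ for $k=1,\dots,4$. The plan has three steps, in this order.

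\textbf{Step 1 (the two lowest power sums).} Since $\chi$ is a ring homomorphism, $\chi(T^k)=\chi(T)^k$. For any quaternionic matrix $A=A_1+A_2\boldsymbol{j}$,
$$
\operatorname{tr}\chi(A)=\operatorname{tr}A_1+\overline{\operatorname{tr}A_1}=2\sum_i \operatorname{Re}(a_{ii})=Tr(A).
$$
Hence the power sums of the eigenvalues of $\chi(T)$ are $s_1=\tau$ and $s_2=\sigma$. Writing $p_T(z)=z^4-e_1z^3+e_2z^2-e_3z+e_4$, Newton's identities give $e_1=\tau$ and $e_2=(s_1^2-s_2)/2=(\tau^2-\sigma)/2$. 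Substituting $\sigma=\frac{\tau^2}{2}-2\delta-4$ from Lemma \ref{88} yields $\beta=\frac14\tau^2+\delta+2$.

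\textbf{Step 2 (the constant term).} We have $e_4=\det_{\mathbb C}\chi(T)=(\det_{\mathbb H}T)^2=1$, because $\det_{\mathbb H}(T)=1$ for $T\in U(1,1;\mathbb H)$, as recalled in the introduction.

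\textbf{Step 3 (the coefficient $e_3$, the main obstacle).} To obtain $e_3$ without computing $s_3$ directly, I would show that $p_T$ is palindromic. From $T^*JT=J$ we get $T^{-1}=JT^*J$. Since $\chi(A^*)=\chi(A)^*$ and $\chi(J)=\operatorname{diag}(1,-1,1,-1)$, this gives $\chi(T)^{-1}=\chi(J)\chi(T)^*\chi(J)$. Therefore $\chi(T)^{-1}$ is similar to $\chi(T)^*$, whose eigenvalues are the complex conjugates of those of $\chi(T)$.

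On the other hand, $\chi(A)$ is always similar to its entrywise conjugate $\overline{\chi(A)}$. Explicitly, $\overline{\chi(A)}=K\chi(A)K^{-1}$ with $K=\begin{bmatrix}0&I\\-I&0\end{bmatrix}$. It follows that $p_T$ has real coefficients, so the spectrum of $\chi(T)$ is closed under conjugation.

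Combining the two facts, the eigenvalue multiset of $\chi(T)$ is invariant under $z\mapsto 1/z$. Hence $z^4p_T(1/z)=p_T(z)/e_4=p_T(z)$, which forces $e_3=e_1=\tau$.

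Assembling Steps 1--3 gives the stated formula for $p_T(z)$.
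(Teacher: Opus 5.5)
Your proof is correct. Steps 1 and 2 coincide with the paper's argument: traces of $\chi(T)$ and $\chi(T)^2$ give $e_1$ and $e_2$, and $e_4=(\det_{\mathbb H}T)^2=1$.

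The difference lies in how you obtain $e_3$. The paper writes $e_3=e_4\operatorname{tr}_{\mathbb C}(\chi(T)^{-1})$. It then reads off $\operatorname{tr}_{\mathbb C}(\chi(T)^{-1})=Tr(T^{-1})=\tau$ from the explicit inverse recalled in the introduction, whose diagonal entries $\overline a,\overline d$ have the same real parts as $a,d$. You instead prove the stronger fact that the eigenvalue multiset of $\chi(T)$ is invariant under $z\mapsto 1/z$. You get this by combining $\chi(T)^{-1}=\chi(J)\chi(T)^*\chi(J)$ with the similarity $\overline{\chi(T)}=K\chi(T)K^{-1}$, and then read off $e_3=e_1$ from palindromicity.

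The paper's route is more economical. Your own identity already gives $\operatorname{tr}_{\mathbb C}\chi(T)^{-1}=\operatorname{tr}_{\mathbb C}\chi(T)^*=\overline{\tau}=\tau$ directly, so the $K$-similarity is needed only because you aim for the full symmetry.

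What your route buys is that symmetry itself. It explains why $p_T$ is palindromic, namely that the eigenvalues of $\chi(T)$ pair off as $\lambda,\lambda^{-1}$, as in the later factorization $p_T(z)=(z^2-2xz+1)(z^2-2yz+1)$. It would also let you bypass the Study determinant. Invariance under inversion gives $e_4=\prod\lambda_i=\prod\lambda_i^{-1}=e_4^{-1}$, so $e_4^2=1$, and the nonnegativity $\det_{\mathbb C}\chi(T)\ge 0$ from Lemma~\ref{38} then forces $e_4=1$.
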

\begin{proof}
	Let $T=A+B\boldsymbol{j}$, where $A,B\in M_{2}(\mathbb{C})$ and
	$$
	M=\chi(T)=\begin{bmatrix}
		A&B\\
		-\overline{B}&\overline{A}
	\end{bmatrix}.
	$$
	
	The properties of complex representations guarantee that
	$$
	tr_{\mathbb{C}}(M)=\tau,\ \ tr_{\mathbb{C}}(M^{2})=\sigma.
	$$
	Let the four eigenvalues of the matrix $M\in M_{4}(\mathbb{C})$, counted with multiplicity, be denoted by $\lambda_{1},\lambda_{2},\lambda_{3},\lambda_{4}$. Expanding
	$$
	p_{T}(z)=\prod_{j=1}^{4}(z-\lambda_{j})=z^{4}-e_{1}z^{3}+e_{2}z^{2}-e_{3}z+e_{4}.
	$$
	Here $e_1$ denotes the sum of the four eigenvalues, $e_2$ the sum of their pairwise products, $e_3$ the sum of their triple products, and $e_4$ the product of all four eigenvalues. First of all,
	$$
	e_1=\lambda_{1}+\lambda_{2}+\lambda_{3}+\lambda_{4}=tr_{\mathbb{C}}(M)=\tau.
	$$
	Secondly, by expanding the square, we obtain
	$$
	(\sum_{j=1}^{4}\lambda_{j})^{2}=\sum_{j=1}^{4}\lambda_{j}^{2}+2\sum_{i<j}\lambda_{i}\lambda_{j}.
	$$
	Therefore,
	$$
	e_2=\frac{\tau^{2}-\sigma}{2}=\frac{\tau^{2}}{4}+\delta+2=\beta.
	$$
	It follows from the definition of $\det_{\mathbb{H}}(T)$ and Lemma \ref{38} that $\det(M)=1$.

	Since all eigenvalues are nonzero,
	\begin{align*}
		e_3&=\lambda_{1}\lambda_{2}\lambda_{3}+\lambda_{1}\lambda_{2}\lambda_{4}+\lambda_{1}\lambda_{3}\lambda_{4}+\lambda_{2}\lambda_{3}\lambda_{4}\\
		&=(\lambda_{1}\lambda_{2}\lambda_{3}\lambda_{4})(\lambda_{1}^{-1}+\lambda_{2}^{-1}+\lambda_{3}^{-1}+\lambda_{4}^{-1})\\
		&=\det(M)tr_{\mathbb{C}}(M^{-1}).
	\end{align*}
	
	Moreover, since
	$$
	T^{-1}=\begin{bmatrix}
		\overline{a}&-\overline{c}\\
		-\overline{b}&\overline{d}
	\end{bmatrix},
	$$
	we have
	$$
	tr_{\mathbb{C}}(M^{-1})=Tr(T^{-1})=\tau.
	$$
	Hence $e_3=1\cdot\tau=\tau$.
\end{proof}
If we decompose $p_{T}(z)$ over $\mathbb{C}$ into
$$
p_{T}(z)=(z^{2}-2xz+1)(z^{2}-2yz+1),
$$
then
\begin{equation}\label{26}
	x+y=\frac{\tau}{2},\ \ 4xy+2=\beta,\ \ (x-y)^{2}=-\delta.
\end{equation}

By Lemma \ref{25}, Lemma \ref{29} and the injectivity of $\chi$, we may obtain
$$
T^{4}-\tau T^{3}+\beta T^{2}-\tau T+I_{2}=0_{2},
$$
where $0_2$ denotes the zero matrix of order 2.
Consequently,
\begin{equation}\label{30}
	T^{n+4}=\tau T^{n+3}-\beta T^{n+2}+\tau T^{n+1}-T^{n},\ \ n\geq 0.
\end{equation}
Identity (\ref{30}) is the basis for the recurrence derived below.

\begin{lemma}
	$$
	Tr(T^{n})=2(C_{n}(x)+C_{n}(y)).
	$$
\end{lemma}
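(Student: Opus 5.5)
The identity $Tr(T^n)=2(C_n(x)+C_n(y))$ will follow by passing to the complex representation $M=\chi(T)$ and reading off its eigenvalues from the factorization $p_T(z)=(z^2-2xz+1)(z^2-2yz+1)$.

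First, I would show that $Tr(T^n)=tr_{\mathbb C}(\chi(T)^n)$ for every $n\ge 0$. Write $T^n=A_n+B_n\jj$ with $A_n,B_n\in M_2(\mathbb C)$. Since $\chi$ is multiplicative, $\chi(T^n)=\chi(T)^n=M^n$. Its trace is $tr(A_n)+tr(\overline{A_n})=2\,\mathrm{Re}\,tr(A_n)$. The real part of each diagonal entry of $T^n$ is the real part of the corresponding diagonal entry of $A_n$, so this trace equals $Tr(T^n)$ as defined in the introduction. This is the same fact already used in the proof of Lemma \ref{29} for $n=1,2$.

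Second, I would identify the eigenvalues of $M$. By Lemma \ref{29} and the factorization above, the eigenvalues of $M$, counted with multiplicity, are the roots $\mu,\mu^{-1}$ of $z^2-2xz+1$ and $\nu,\nu^{-1}$ of $z^2-2yz+1$. Here $\mu+\mu^{-1}=2x$ and $\nu+\nu^{-1}=2y$, with $x,y\in\mathbb C$ possibly non-real. Since every square complex matrix is triangularizable, the eigenvalues of $M^n$ are $\mu^n,\mu^{-n},\nu^n,\nu^{-n}$. Therefore
$$
tr_{\mathbb C}(M^n)=(\mu^n+\mu^{-n})+(\nu^n+\nu^{-n}).
$$

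Third, I would prove that $\mu^n+\mu^{-n}=2C_n(x)$ whenever $\mu+\mu^{-1}=2x$. This holds for arbitrary complex $x$, so it covers the loxodromic case, where $x$ or $y$ may lie outside $[-1,1]$ or fail to be real. The proof is by induction using the Chebyshev recurrence. Set $s_n=\mu^n+\mu^{-n}$. Then $s_0=2=2C_0(x)$ and $s_1=2x=2C_1(x)$. Moreover,
$$
s_{n+1}=(\mu+\mu^{-1})s_n-s_{n-1}=2x\,s_n-s_{n-1},
$$
which is exactly the recurrence satisfied by $2C_n(x)$. Combining the three steps gives $Tr(T^n)=2(C_n(x)+C_n(y))$.

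The only delicate point is the first step. One needs to know that the quaternionic trace $Tr$ of a power equals the complex trace of the power of the complex representation, which requires the multiplicativity of $\chi$. That is standard from \cite{FZZ1997}, so I would cite it. Repeated eigenvalues, for example in the parabolic cases, cause no trouble because the argument counts eigenvalues with multiplicity via triangularization.
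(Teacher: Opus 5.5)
Your proof is correct and follows the paper's route. Like the paper, you read off the eigenvalues of $\chi(T)$ as the roots of the two quadratic factors of $p_T$, use $Tr(T^n)=tr_{\mathbb C}(\chi(T)^n)$, and identify $\lambda^n+\lambda^{-n}$ with $2C_n(x)$ through the Chebyshev recurrence. Your added justifications — multiplicativity of $\chi$ for the trace identity and triangularization for the eigenvalues of $M^n$ — supply details the paper only asserts.
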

\begin{proof}
	Assume $\lambda_{1}$ and $\lambda_{2}$ are two eigenvalues of $z^{2}-2xz+1=0$. Then $\lambda_{1}+\lambda_{2}=2x,\ \ \lambda_{1}\lambda_{2}=1$. Hence both $\lambda_{1}$ and $\lambda_{2}$ are nonzero and $\lambda_{1}=\frac{1}{\lambda_{2}}$. We may rewrite them as $\lambda,\lambda^{-1}$. Similarly, the two eigenvalues of the other factor $z^{2}-2yz+1=0$ can be denoted as $\mu,\mu^{-1}$, then we have $\mu+\mu^{-1}=2y$. Therefore, the four eigenvalues of $p_{T}(z)$ are
	$$
	\lambda,\lambda^{-1},\ \mu,\ \mu^{-1}.
	$$
	Here, eigenvalues are counted with multiplicity, and some roots are allowed to be equal. In fact, these four eigenvalues are precisely the eigenvalues of the complex matrix $M$. Therefore, the four eigenvalues of $M^{n}$ are $\lambda^{n},\lambda^{-n},\mu^{n},\mu^{-n}$. Since $Tr(T^{n})=tr_{\mathbb{C}}(M^{n})$, we have
	\begin{equation}\label{22}
		Tr(T^{n})=\lambda^{n}+\lambda^{-n}+\mu^{n}+\mu^{-n}.
	\end{equation}
	Let $A_{n}=\lambda^{n}+\lambda^{-n}$. Its first two terms are
	$$
	A_{0}=2,\ \ A_{1}=\lambda+\lambda^{-1}=2x.
	$$
	Moreover, expanding the multiplication directly gives
	\begin{align*}
		A_{n+1}&=(\lambda+\lambda^{-1})(\lambda^{n}+\lambda^{-n})-(\lambda^{n-1}+\lambda^{-(n-1)})\\
		&=2xA_{n}-A_{n-1}.
	\end{align*}
	On the other hand, the first two terms of the sequence $2C_{n}(x)$ are also $2,2x$, and it satisfies the same recurrence relation. Therefore
	$$
	A_{n}=2C_{n}(x).
	$$
	Likewise, $\mu^{n}+\mu^{-n}=2C_{n}(y)$. Substitution into (\ref{22}) yields
	\begin{equation}\label{23}
		Tr(T^{n})=2(C_{n}(x)+C_{n}(y)).
	\end{equation}
\end{proof}

\begin{lemma}\label{27}
	$$
	\triangle(T^{n})=-(C_{n}(x)-C_{n}(y))^{2}.
	$$
\end{lemma}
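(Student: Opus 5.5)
The plan is to apply Lemma \ref{88} to $T^n$ in place of $T$, which is legitimate because $T^n\in U(1,1;\mathbb{H})$. This gives
\[
\triangle(T^n)=\frac14\bigl(Tr(T^n)\bigr)^2-\frac12 Tr(T^{2n})-2 .
\]
Every trace on the right can then be rewritten using formula (\ref{23}) from the preceding lemma. Note that $T^n$ and $T^{2n}$ have the same polynomials $x,y$ attached to $T$, so no new factorization is needed. Thus $Tr(T^n)=2(C_n(x)+C_n(y))$ and $Tr(T^{2n})=2(C_{2n}(x)+C_{2n}(y))$.

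The key identity is the Chebyshev duplication formula $C_{2n}(t)=2C_n(t)^2-1$. It follows from $C_m(\cos\theta)=\cos(m\theta)$ and $\cos 2\phi=2\cos^2\phi-1$, and it holds as a polynomial identity for all complex $t$, which matters because $x,y$ may be arbitrary complex numbers. To avoid trigonometry, one can argue directly with the parametrization already used in the proof of (\ref{23}): write $2C_n(x)=\lambda^n+\lambda^{-n}$, so that
\[
2C_{2n}(x)=\lambda^{2n}+\lambda^{-2n}=(\lambda^n+\lambda^{-n})^2-2=4C_n(x)^2-2 .
\]
The same computation applies to $y$ with $\mu$ in place of $\lambda$.

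Substituting these expressions and writing $X=C_n(x)$, $Y=C_n(y)$, we get
\[
\triangle(T^n)=(X+Y)^2-(2X^2-1+2Y^2-1)-2=2XY-X^2-Y^2=-(X-Y)^2 ,
\]
which is the claimed formula.

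The only potential obstacle is justifying that the trace formula and the lemmas apply uniformly to $T^n$. This is settled by closure of $U(1,1;\mathbb{H})$ under multiplication, together with $tr_{\mathbb{C}}(\chi(T)^m)=Tr(T^m)$ for every $m$, which holds because $\chi$ is multiplicative. As a sanity check, the case $n=1$ recovers $\triangle(T)=-(x-y)^2$, in agreement with (\ref{26}).
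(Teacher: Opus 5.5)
Your proposal is correct and follows the paper's proof essentially verbatim: you apply Lemma \ref{88} to $T^n\in U(1,1;\mathbb{H})$, rewrite $Tr(T^n)$ and $Tr(T^{2n})$ via (\ref{23}), and substitute the duplication identity $C_{2n}(t)=2C_n(t)^2-1$. Your derivation of that identity from $\lambda^{2n}+\lambda^{-2n}=(\lambda^n+\lambda^{-n})^2-2$ is a small but valid alternative to quoting it. It also makes explicit that the identity holds for complex $x,y$.
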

\begin{proof}
	Every $T\in U(1,1;\mathbb{H})$ satisfies (\ref{888}) in Lemma \ref{88}. Since $T^{n}$ still belongs to this group, it follows that
	$$
	\triangle(T^{n})=\frac{1}{4}(Tr (T^{n}))^{2}-\frac{1}{2}Tr(T^{2n})-2.
	$$
	Replacing $n$ by $2n$ in (\ref{23}), we obtain
	$$
	Tr(T^{2n})=2(C_{2n}(x)+C_{2n}(y)).
	$$
	We now employ the double-angle identity for Chebyshev polynomials
	$$
	C_{2n}(t)=2C_{n}(t)^{2}-1.
	$$
	Therefore
	$$
	Tr(T^{2n})=4C_{n}(x)^{2}+4C_{n}(y)^{2}-4.
	$$

	And hence
	\begin{align*}
		\triangle({T^{n}})&=\frac{1}{4}[2(C_{n}(x)+C_{n}(y))]^{2}-\frac{1}{2}(4C_{n}(x)^{2}+4C_{n}(y)^{2}-4)-2\\
		&=(C_{n}(x)+C_{n}(y))^{2}-2C_{n}(x)^{2}-2C_{n}(y)^{2}+2-2\\
		&=-(C_{n}(x)-C_{n}(y))^{2}.
	\end{align*}	
\end{proof}

For convenience, we introduce the following definition of $q_n$. Assume $C_{n}(t)=\sum_{k=0}^{n}\gamma_{k}t^{k}$. By means of
$$
x^{k}-y^{k}=(x-y)\sum_{j=0}^{k-1}x^{k-1-j}y^{j}\ \ (k\geq 1),
$$
we obtain the bivariate polynomial corresponding to the divided difference
$$
q_{n}=\frac{C_{n}(x)-C_{n}(y)}{x-y}=\sum_{k=1}^{n}\gamma_{k}\sum_{j=0}^{k-1}x^{k-1-j}y^{j}.
$$
We adopt the convention that the empty sum equals $0$ when $n=0$. It is still well defined when $x=y$ and
$$
q_{n}=\sum_{k=1}^{n}k\gamma_{k}t^{k-1}=C_{n}^{\prime}(t).
$$
This allows us to give the complete definition of $q_n$.
\begin{definition}
	Define
	\begin{displaymath}
		q_n=\left \{ \begin{array}{ll}
			\frac{C_{n}(x)-C_{n}(y)}{x-y},\ x\neq y,\\
			C_{n}^{\prime}(x),\ x=y.
		\end{array} \right.
	\end{displaymath}
\end{definition}

For a sequence $a=(a_0,a_1,\cdots)$, denote
$$
Ea=(a_1,a_2,\cdots).
$$
That is, $(Ea)_n=a_{n+1},\ (E^{2}a)_{n}=a_{n+2},\cdots,(E^{k}a)_{n}=a_{n+k}$. The operator $E$ is called the shift operator.

\begin{lemma}\label{28}
	$q_n$ is a real number and uniquely determined by the initial values and the fourth-order recurrence:
	$$
	q_0=0,\ \ q_1=1,\ \ q_2=\tau,\ \ q_3=\frac{3}{4}\tau^{2}-\delta-3,
	$$
	\begin{equation}\label{36}
		q_{n+4}=\tau q_{n+3}-(\frac{1}{4}\tau^{2}+\delta+2)q_{n+2}+\tau q_{n+1}-q_{n}.
	\end{equation}
\end{lemma}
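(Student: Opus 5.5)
Our plan is to show that $q_n$ satisfies the same linear recurrence as $\operatorname{Tr}(T^n)$, namely the one with characteristic polynomial $p_T$ from Lemma \ref{29}. We first handle the generic case $x\neq y$ and then pass to $x=y$ by continuity.

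\emph{Recurrence for $x\neq y$.} Write $C_n(x)=\tfrac12(\lambda^n+\lambda^{-n})$ and $C_n(y)=\tfrac12(\mu^n+\mu^{-n})$, as in the proof of (\ref{23}). Each of the four sequences $\lambda^{\pm n}$, $\mu^{\pm n}$ is annihilated by the operator $p_T(E)=E^4-\tau E^3+\beta E^2-\tau E+1$, because its base is a root of $p_T$. Hence $C_n(x)-C_n(y)$ is annihilated by $p_T(E)$. Dividing by the constant $x-y$ gives (\ref{36}), since $\beta=\frac14\tau^2+\delta+2$.

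\emph{Passage to $x=y$.} In this case $q_n$ is the limit of the divided difference, which is a polynomial in $(x,y)$. The recurrence is a polynomial identity in $(x,y)$, once we substitute $\tau=2(x+y)$ and $\beta=4xy+2$ from (\ref{26}). An identity that holds on the dense set $x\neq y$ holds everywhere. Alternatively, one can differentiate the recurrence for $C_n$ directly.

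\emph{Initial values.} These follow from $C_0=1$, $C_1=t$, $C_2=2t^2-1$ and $C_3=4t^3-3t$. We get $q_0=0$, $q_1=1$ and $q_2=2(x+y)=\tau$. For $q_3$,
\[
q_3=4(x^2+xy+y^2)-3=4(x+y)^2-4xy-3 .
\]
By (\ref{26}) we have $4(x+y)^2=\tau^2$ and $4xy=\frac14\tau^2-\delta$ (using $(x-y)^2=-\delta$). Therefore $q_3=\frac34\tau^2+\delta... $ needs checking: $\tau^2-\frac14\tau^2+\delta-3=\frac34\tau^2+\delta-3$. This sign disagrees with the stated formula $q_3=\frac34\tau^2-\delta-3$, so we will recheck it against the convention $\delta=\Delta(T)$, where $(x-y)^2=-\delta$.

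\emph{Reality and uniqueness.} Since $q_n$ is symmetric in $x$ and $y$, it is a polynomial in $x+y=\tau/2$ and $xy$. By (\ref{26}), both of these are real, because $\tau$ and $\delta$ are real. Alternatively, reality follows inductively from the real recurrence and the real initial values. Uniqueness holds because the recurrence has order four and four initial values are prescribed.

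\emph{Main obstacle.} We expect the only delicate points to be the degenerate case $x=y$ and the sign bookkeeping in $q_3$. For the degenerate case, we will rely on the polynomial-identity argument rather than on root multiplicities.
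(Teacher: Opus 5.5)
There is one concrete error, in your computation of $q_3$, and it makes you doubt a formula that is actually correct. From (\ref{26}) you have $4xy+2=\beta=\frac14\tau^2+\delta+2$, so
\[
4xy=(x+y)^2-(x-y)^2=\frac{\tau^2}{4}-(-\delta)=\frac{\tau^2}{4}+\delta ,
\]
not $\frac14\tau^2-\delta$. You subtracted $(x-y)^2=-\delta$ with the wrong sign. Substitute the correct value into your (correct) expression $q_3=4(x+y)^2-4xy-3$. This gives $q_3=\tau^2-\frac14\tau^2-\delta-3=\frac34\tau^2-\delta-3$, exactly as stated. So there is no discrepancy with the convention $\delta=\triangle(T)$ to ``recheck''. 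As written, however, your proposal asserts a value of $q_3$ that contradicts the lemma and leaves the initial-value step unfinished. It must be fixed, because the uniqueness claim rests on all four prescribed initial values.

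The rest of the proposal is sound. Your $x\neq y$ step is the paper's argument in different dress. The paper uses that $L_x=E^2-2xE+I$ and $L_y=E^2-2yE+I$ commute and that $L_xL_y=p_T(E)$. You use instead that $\lambda^{\pm1},\mu^{\pm1}$ are roots of $p_T$.

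For $x=y$ the routes differ. The paper differentiates the Chebyshev recurrence to get $q_{n+2}-2t_0q_{n+1}+q_n=2C_{n+1}(t_0)$, then applies $L_{t_0}$ once more. Your polynomial-identity argument is a valid and shorter alternative. You should state explicitly that the $x\neq y$ computation never uses that $(x,y)$ comes from $T$, so the identity holds on the whole set $\{x\neq y\}$. Alternatively, avoid density altogether: the recurrence expression for $q_n$, multiplied by $x-y$, vanishes in the integral domain $\mathbb{R}[x,y]$, hence vanishes itself.

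Finally, your symmetric-function argument for reality ($q_n$ is a polynomial in $x+y=\tau/2$ and $xy$) replaces the paper's observation that $x,y$ are real or complex conjugate, and it is slightly cleaner.
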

\begin{proof}
	First of all, we prove $q_n$ is real. (\ref{26}) tells us $x$ and $y$ are either both real or complex conjugates of each other. Since $C_n$ has real coefficients and $x,y$ are either both real or complex conjugates, the definition of $q_n$ shows that $q_n\in\mathbb{R}$ for every $n$. Next we present the derivation of the recurrence formula for $q_n$. We discuss the two cases $x\neq y$ and $x=y$ separately.
	
	\textbf{Case 1}: $x\neq y$. \ By the defining recurrence for the Chebyshev polynomials,
	$$
	C_{n+2}(x)-2xC_{n+1}(x)+C_{n}(x)=0,\ \ C_{n+2}(y)-2yC_{n+1}(y)+C_{n}(y)=0.
	$$
	In terms of the shift operator, these two recurrences become
	$$
	(E^{2}-2xE+I)a_n=0,\ \ (E^{2}-2yE+I)b_n=0,
	$$
	if we set $a_n=C_{n}(x),\ b_n=C_{n}(y)$. For brevity, let $L_x=E^{2}-2xE+I,\ L_y=E^{2}-2yE+I$ and $d_n=a_n-b_n$. As $L_x$ and $L_y$ are both polynomials with constant coefficients in the same operator $E$, they commute. That is, $L_{x}L_{y}=L_{y}L_{x}$. Therefore,
	$$
	L_{x}L_{y}a_{n}=L_{y}L_{x}a_{n}=L_{y}(0)=0,\ \ L_{x}L_{y}b_{n}=L_{y}L_{x}b_{n}=L_{x}(0)=0.
	$$
	By virtue of linearity,
	$$
	L_{x}L_{y}(a_{n}-b_{n})=L_{x}L_{y}d_{n}=0-0=0.
	$$
	Recall that
	\begin{align*}
		p_{T}(z)&=(z^{2}-2xz+1)(z^{2}-2yz+1)\\
		&=z^{4}-2(x+y)z^{3}+(4xy+2)z^{2}-2(x+y)z+1
	\end{align*}
	Hence
	$$
	d_{n+4}-2(x+y)d_{n+3}+(4xy+2)d_{n+2}-2(x+y)d_{n+1}+d_{n}=0.
	$$
	By (\ref{26}),
	$$
	d_{n+4}=\tau d_{n+3}-\beta d_{n+2}+\tau d_{n+1}-d_{n}.
	$$
	When $x\neq y$, dividing by the constant $x-y$, which does not depend on $n$, yields the  fourth-order recurrence in (\ref{36}) for $q_{n}$. Substituting $n=0,1,2,3$ respectively into the definition for $q_n$ when $x\neq y$, we obtain the four initial values $q_0=0,\ \ q_1=1,\ \ q_2=\tau,\ \ q_3=\frac{3}{4}\tau^{2}-\delta-3$.
	
	\textbf{Case 2}: $x=y$. \ It follows from (\ref{26}) that $t_0=x=y=\frac{\tau}{4}$ and $\delta=\triangle(T)=0$. Set $D_{n}(t)=C_{n}^{\prime}(t)$, then $q_{n}=D_{n}(t_{0})$. By definition of Chebyshev polynomials,
	$$
	C_{n+2}(t)-2tC_{n+1}(t)+C_{n}(t)=0,\ \ n\geq 0.
	$$
	Differentiating the above formula with respect to $t$ yields
	$$
	D_{n+2}(t)-2tD_{n+1}(t)+D_{n}(t)=2C_{n+1}(t).
	$$
	Evaluating at $t=t_0$, we obtain
	$$
	q_{n+2}-2t_{0}q_{n+1}+q_{n}=2C_{n+1}(t_{0}).
	$$
	Let $A_n=q_{n+2}-2t_{0}q_{n+1}+q_{n}$. Then $A_n=2C_{n+1}(t_{0})$. Since
	$$
	C_{n+3}(t_{0})-2t_{0}C_{n+2}(t_{0})+C_{n+1}(t_{0})=0,
	$$
	multiplying both sides by $2$ gives
	$$
	A_{n+2}-2t_{0}A_{n+1}+A_{n}=0.
	$$
	Substituting the definition of $A_n$ back in and expanding term by term, we obtain
	\begin{align*}
		0&=(q_{n+4}-2t_{0}q_{n+3}+q_{n+2})-2t_{0}(q_{n+3}-2t_{0}q_{n+2}+q_{n+1})+(q_{n+2}-2t_{0}q_{n+1}+q_{n})\\
		&=q_{n+4}-4t_{0}q_{n+3}+(4t_{0}^{2}+2)q_{n+2}-4t_{0}q_{n+1}+q_{n}.
	\end{align*}
	Therefore,
	\begin{align*}
		q_{n+4}&=4t_{0}q_{n+3}-(4t_{0}^{2}+2)q_{n+2}+4t_{0}q_{n+1}-q_{n}\\
		&=\tau q_{n+3}-(\frac{\tau^{2}}{4}+2)q_{n+2}+\tau q_{n+1}-q_{n}.
	\end{align*}
	This is precisely the formula in the general recurrence of $q_{n}$ when $\delta=0$. In this case, $\beta=\frac{\tau^{2}}{4}+\delta+2=\frac{\tau^{2}}{4}+2$. The four initial values of $q_n$ are then derived directly from the derivative of the polynomial $C_{n}(t)$, which are exactly the same as the four initial values in case $1$.
\end{proof}

\begin{theorem}\label{34}
	For each integer $n\geq 1$, the following equation always holds true.
	$$
	\triangle(T^{n})=q_{n}^{2}\triangle(T).
	$$
\end{theorem}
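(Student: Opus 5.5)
The plan is to combine Lemma \ref{27} with the relation $(x-y)^2=-\delta$ from (\ref{26}) and the definition of $q_n$. Lemma \ref{27} gives
\[
\triangle(T^n)=-(C_n(x)-C_n(y))^2 .
\]
Taking $n=1$ and using $C_1(t)=t$ recovers $\triangle(T)=-(x-y)^2$, which is consistent with (\ref{26}). The argument then splits into the two cases used in the definition of $q_n$.

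\textbf{Case $x\neq y$.} By definition, $C_n(x)-C_n(y)=q_n(x-y)$. Squaring gives
\[
(C_n(x)-C_n(y))^2=q_n^2(x-y)^2 .
\]
Hence
\[
\triangle(T^n)=-q_n^2(x-y)^2=q_n^2\bigl(-(x-y)^2\bigr)=q_n^2\triangle(T).
\]
The only point to check is that these manipulations are legitimate even though $x,y$ may be complex conjugates rather than real. All identities hold in $\mathbb{C}$, and both $q_n$ and $(x-y)^2=-\delta$ are real by Lemma \ref{28} and (\ref{26}). So the final identity is an equality of real numbers.

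\textbf{Case $x=y$.} Here (\ref{26}) gives $\triangle(T)=\delta=-(x-y)^2=0$, so the right-hand side $q_n^2\triangle(T)$ vanishes. On the left, $C_n(x)-C_n(y)=0$, so Lemma \ref{27} gives $\triangle(T^n)=0$ as well. The identity therefore holds trivially, and the derivative value $q_n=C_n'(x)$ plays no role.

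The main subtlety is not computational but foundational. One must make sure Lemma \ref{27} applies to every $T\in U(1,1;\mathbb{H})$, that is, that the factorization $p_T(z)=(z^2-2xz+1)(z^2-2yz+1)$ with $x+y=\tau/2$ and $(x-y)^2=-\delta$ always exists. This follows because $p_T$ is palindromic: substituting $w=z+z^{-1}$ turns $p_T(z)/z^2$ into the quadratic $w^2-\tau w+(\beta-2)$. Its roots $2x,2y$ then satisfy $x+y=\tau/2$ and $4xy=\beta-2=\tau^2/4+\delta$, which gives $(x-y)^2=-\delta$. Once this is confirmed, the proof is the two-line algebraic substitution above.
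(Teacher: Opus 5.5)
Your proposal is correct and follows the paper's proof: the paper also combines Lemma~\ref{27} with the definition of $q_n$ (and its reality from Lemma~\ref{28}) and the relation $(x-y)^2=-\delta$ from (\ref{26}). Your check via $w=z+z^{-1}$ that the factorization $p_T(z)=(z^2-2xz+1)(z^2-2yz+1)$ always exists is a useful detail that the paper only asserts.
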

\begin{proof}
	This conclusion can be proved by Lemma \ref{27} and Lemma \ref{28}.
\end{proof}
\begin{corollary}
	$\triangle(T^{n})\triangle(T)\geq 0$.
\end{corollary}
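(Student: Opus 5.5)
The corollary follows directly from Theorem \ref{34}. For $n\geq 1$ that theorem gives $\triangle(T^{n})=q_{n}^{2}\triangle(T)$, so
$$
\triangle(T^{n})\triangle(T)=q_{n}^{2}\,\triangle(T)^{2}.
$$
The only input still needed is that $q_n$ is real, and this is the first assertion of Lemma \ref{28}. Since $x$ and $y$ are either both real or complex conjugates by (\ref{26}), and $C_n$ has real coefficients, $q_n\in\mathbb{R}$. Therefore $q_n^2\geq 0$. Also $\triangle(T)=|b-\overline{c}|^{2}-(a_0-d_0)^2$ is real, so $\triangle(T)^2\geq 0$. The product of two nonnegative reals is nonnegative, which gives $\triangle(T^{n})\triangle(T)\geq 0$.

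For $n=0$ we have $T^0=I_2$, so $b=c=0$ and $a_0=d_0=1$; hence $\triangle(I_2)=0$ and the inequality holds trivially. (This also agrees with $q_0=0$.)

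There is no real obstacle here. The one point needing care is the reality of $q_n$, which is what rules out $q_n^2<0$. In the case $x=y$ it is automatic, since then $x=\tau/4$ is real and $q_n=C_n'(x)$ is real. In the conjugate case $y=\overline{x}$, write $x-y=2i\,\mathrm{Im}(x)$; then the numerator $C_n(x)-C_n(\overline{x})=2i\,\mathrm{Im}\,C_n(x)$ is also purely imaginary, so their quotient is real.

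As an interpretive remark, the corollary says that $\triangle(T^n)$ is either zero or has the same sign as $\triangle(T)$. The vanishing case $q_n=0$ is exactly the degenerate situation treated later.
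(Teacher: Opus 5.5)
Your proof is correct and follows the paper's argument: the corollary is stated as an immediate consequence of Theorem \ref{34}, since $\triangle(T^{n})\triangle(T)=q_n^{2}\triangle(T)^{2}\geq 0$ once $q_n$ is known to be real by Lemma \ref{28}. Your explicit check of reality in the conjugate case $y=\overline{x}$ and your treatment of the trivial case $n=0$ are harmless extra details that the paper leaves implicit.
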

Theorem \ref{34} gives an exact expression for $\triangle(T^{n})$, but it is insufficient to determine the type of $T^n$ when $\triangle(T^{n})$ is $0$. Next, we will provide a more detailed classification of $g_{T^n}$.
\subsection{Classification of $g_{T^n}$}

Write $T^{n}=\begin{bmatrix}
	a_{n}&b_{n}\\
	c_{n}&d_{n}
\end{bmatrix}$. Define
$$
u_{n}=b_{n}-\overline{c_{n}},\ \ v_{n}=(a_{n})_{0}-(d_{n})_{0}=\operatorname{Re}(a_n)
-\operatorname{Re}(d_n).
$$
The following proposition further remedies the deficiency of Theorem \ref{34}.
\begin{proposition}\label{35}
	$u_n$ and $v_n$ satisfy the same fourth-order recurrence as $q_n$, specifically,
	$$
	u_n=q_{n}(b-\overline{c})=q_{n}u_1,\ \ v_{n}=q_{n}(a_{0}-d_{0})=q_{n}v_1,\ \ \text{for any} \ n\geq 0.
	$$
\end{proposition}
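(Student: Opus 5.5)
The plan is to express $u_n$ and $v_n$ linearly in terms of the matrix $X_{T^n}$ from Section~4 and to exploit the fact that $X_T$ essentially commutes with powers of $T$. Recall that for $S\in U(1,1;\mathbb H)$ we have $X_S=S+S^{-1}-\frac{Tr(S)}{2}I_2=\begin{bmatrix} r_S&q_S\\ -\overline{q_S}&-r_S\end{bmatrix}$. Applied to $S=T^n$, the $(1,1)$ entry of $X_{T^n}$ is $v_n$ and the $(1,2)$ entry is $u_n$. It therefore suffices to prove the matrix identity
\[
X_{T^n}=q_nX_T,\qquad n\geq 0,
\]
equivalently $T^n+T^{-n}-\frac{Tr(T^n)}{2}I_2=q_n\bigl(T+T^{-1}-\frac{\tau}{2}I_2\bigr)$. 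This identity involves only real scalars times powers of $T$.

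First step: define $Y_n=T^n+T^{-n}-\frac{Tr(T^n)}{2}I_2$, and show that $Y_n$ satisfies the recurrence (\ref{36}).
\begin{itemize}
\item Multiplying $p_T(T)=0$ by $T^n$ gives (\ref{30}).
\item Since $p_T$ is palindromic, multiplying $p_T(T)=0$ by $T^{-n-4}$ gives the same recurrence for $T^{-n}$.
\item $Tr(T^n)=\lambda^n+\lambda^{-n}+\mu^n+\mu^{-n}$ is a linear combination of the characteristic roots of $p_T$, so the scalar sequence $Tr(T^n)$ satisfies it as well.
\end{itemize}
By linearity, $Y_n$ satisfies (\ref{36}) for all $n\ge 0$. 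This is legitimate because the coefficients $\tau,\beta$ are real and hence central in $M_2(\mathbb H)$. By Lemma \ref{28}, $q_nX_T$ satisfies the same recurrence.

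Second step: check the four initial values $n=0,1,2,3$.
\begin{itemize}
\item $n=0$: $Y_0=2I-2I=0=q_0X_T$.
\item $n=1$: $Y_1=X_T$ by definition.
\item $n=2$: write $T^2+T^{-2}=(T+T^{-1})^2-2I$ and set $T+T^{-1}=X_T+\frac{\tau}{2}I$. Since $X_T^2=-\delta I$, this gives $T^2+T^{-2}=\tau X_T+(\frac{\tau^2}{4}-\delta-2)I$. By Lemma \ref{88}, $\frac{Tr(T^2)}{2}=\frac{\tau^2}{4}-\delta-2$, so $Y_2=\tau X_T=q_2X_T$.
\item $n=3$: use $T^3+T^{-3}=(T+T^{-1})^3-3(T+T^{-1})$ and expand in powers of $X_T$, again using $X_T^2=-\delta I$. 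The coefficient of $X_T$ is $\frac{3\tau^2}{4}-\delta-3=q_3$. The scalar part must equal $\frac{Tr(T^3)}{2}$, because $Tr(X_T)=0$.
\end{itemize}

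Alternatively, one can avoid the recurrence altogether. Write $T+T^{-1}=X_T+\frac{\tau}{2}I$, so that $T+T^{-1}$ lies in the commutative real algebra $\mathbb R[X_T]\cong\mathbb R[\epsilon]/(\epsilon^2+\delta)$, where its eigen-values are $x+y\pm(x-y)$. Then $T^n+T^{-n}=2C_n\bigl(\tfrac{T+T^{-1}}{2}\bigr)$, and the divided-difference formula gives $2C_n=(C_n(x)+C_n(y))I+q_nX_T$ directly. I will mention this as the conceptual reason the identity holds.

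I expect the main obstacle to be this bookkeeping at $n=3$, together with the degenerate case $x=y$. Both are handled uniformly by the recurrence argument, since Lemma \ref{28} already covers both cases. Finally, comparing the $(1,1)$ and $(1,2)$ entries of $X_{T^n}=q_nX_T$ yields $v_n=q_nv_1$ and $u_n=q_nu_1$, and hence also the common recurrence.
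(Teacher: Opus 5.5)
Your proof is correct, and it takes a different route from the paper's. The paper applies the matrix recurrence (\ref{30}) entrywise. That gives recurrences for $b_n$, $\overline{c_n}$, $\operatorname{Re}(a_n)$, $\operatorname{Re}(d_n)$, and hence separately for $u_n$ and $v_n$. The paper then asserts that the initial values $u_2=\tau u_1$, $u_3=q_3u_1$ (and likewise for $v_n$) follow by ``direct calculation''. This route is the more immediate one for the recurrence itself, but the omitted initial-value check must be done with quaternion entries. It needs repeated use of the defining relations $a\overline{c}=b\overline{d}$ and $\overline{a}b=\overline{c}d$, and the $n=3$ case is laborious.

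You instead recognize $u_n$ and $v_n$ as the $(1,2)$ and $(1,1)$ entries of $X_{T^n}=T^n+T^{-n}-\frac{\operatorname{Tr}(T^n)}{2}I_2$. You then prove the single matrix identity $X_{T^n}=q_nX_T$. The group structure enters only through the form of $(T^n)^{-1}$ and the relation $X_T^2=-\delta I_2$. The initial values become short computations in the commutative algebra $\mathbb{R}[X_T]$. At $n=3$ your trace argument also uses $\operatorname{Tr}(T^{-3})=\operatorname{Tr}(T^3)$, which holds because $T^3\in U(1,1;\mathbb H)$; you should state this.

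Your formulation also yields more than the statement asks for:
\begin{itemize}
\item Squaring $X_{T^n}=q_nX_T$ and using $X_S^2=-\Delta(S)I_2$ for $S=T^n$ gives $\Delta(T^n)=q_n^2\Delta(T)$ at once. So Theorem~\ref{34} becomes a corollary without Lemma~\ref{27}.
\item Your closed-form alternative, $T^n+T^{-n}=2C_n\bigl(\tfrac{T+T^{-1}}{2}\bigr)=(C_n(x)+C_n(y))I_2+q_nX_T$, needs neither the recurrence nor Lemma~\ref{28}. It also explains why the divided difference $q_n$ appears.
\item One caveat on that alternative: when $X_T=0$, $\mathbb R[X_T]$ is only a quotient of $\mathbb R[\epsilon]/(\epsilon^2+\delta)$. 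The identity is unaffected, since it passes to the quotient unchanged.
\end{itemize}
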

\begin{proof}
	From the matrix recurrence relation (\ref{30}), comparing the entries of the matrices componentwise, we obtain
	\begin{equation}\label{31}
		b_{n+4}=\tau b_{n+3}-\beta b_{n+2}+\tau b_{n+1}-b_{n},
	\end{equation}
	\begin{equation}\label{32}
		c_{n+4}=\tau c_{n+3}-\beta c_{n+2}+\tau c_{n+1}-c_{n}.
	\end{equation}
	Since $\tau$ and $\beta$ are both real numbers, taking the conjugate of (\ref{32}) gives
	\begin{equation}\label{33}
		\overline{c_{n+4}}=\tau \overline{c_{n+3}}-\beta \overline{c_{n+2}}+\tau \overline{c_{n+1}}-\overline{c_{n}}.
	\end{equation}
	Subtracting (\ref{33}) from (\ref{31}) gives
	$$
	u_{n+4}=\tau u_{n+3}-\beta u_{n+2}+\tau u_{n+1}-u_{n}.
	$$
	Similarly, taking the real parts of the recurrence relations for $a_n$ and $d_n$, and then subtracting them, we obtain
	$$
	v_{n+4}=\tau v_{n+3}-\beta v_{n+2}+\tau v_{n+1}-v_{n}.
	$$
	Therefore, $u_n$ and $v_n$ indeed satisfy the same fourth-order recurrence as $q_n$; this fact follows from the matrix identity. Furthermore, direct calculation gives the first four terms of $u_n$, $v_n$ and $q_n$ as follows.
	
	\begin{table}[htbp]
		\centering
		\label{tab: quv}
		\renewcommand{\arraystretch}{1.2}
		\begin{tabular}{|c|c|c|c|}
			\hline
			n& $q_n$ & $u_n$ & $v_n$\\
			\hline
			0&0 & 0 &0\\
			
			1& 1 & $u_1$ &$v_1$\\
			
			2& $\tau$ & $\tau u_1$ & $\tau v_1$\\
			
			3& $\frac{3}{4}\tau^2-\delta-3$ & $(\frac{3}{4}\tau^2-\delta-3)u_1$
			& $(\frac{3}{4}\tau^2-\delta-3)v_1$\\[6pt]
			\hline

		\end{tabular}
	\end{table}

\end{proof}
With the notation, $\triangle(T^{n})=|u_n|^2-v_{n}^{2}$. Theorem \ref{34} only describes how the difference of squares $|u_n|^2-v_{n}^{2}$ changes, whereas Proposition \ref{35} describes separately how the two quantities $u_n$ and $v_n$ change. More importantly, Proposition \ref{35} provides a
finer classification of $T^n$ when $\triangle(T^{n})=0$. For example, suppose the original matrix $T$ satisfies
$$
\triangle(T)=0,\ \ u_{1}\neq 0.
$$
In this case, using only Theorem \ref{34}, one can only conclude that $\triangle(T^{n})=0$, and no further determination is possible. Proposition \ref{35}, however, gives
\begin{enumerate}
	\item If $q_{n}\neq 0$, then $u_{n}\neq 0$. The M\"{o}bius transformation $g_{T^{n}}$ corresponding to $T^n$ is compound parabolic.
	\item If $q_{n}= 0$, then $u_{n}=v_n=0$. At this point, we enter the classification of the simple type, and it is necessary to continue checking $b_{n}$ and $Tr(T^n)$.
\end{enumerate}
Proposition \ref{35} allows us to obtain $b_n-\overline{c_n}$ and $Re (a_n)-Re (d_n)$ by computing only a single real sequence $q_n$, without having to compute the entire $T^n$ separately for these two quantities. Note, however,
$$
q_n=0\ \Rightarrow\ b_n=\overline{c_n}
$$
does not imply $b_n=c_n=0$, nor does it imply $T^n=I_n$. Therefore, Proposition \ref{35} alone is still not sufficient to complete the classification of all types.

\begin{example}
	Let $T=\begin{bmatrix}
		-1+\boldsymbol{i}&\boldsymbol{i}\\
		\boldsymbol{i}&1+\boldsymbol{i}
	\end{bmatrix}$. Then $T\in U(1,1;\mathbb{H})$ and $g_{T}$ is compound parabolic. $T^2=\begin{bmatrix}
		-1-2\boldsymbol{i}&-2\\
		-2&-1+2\boldsymbol{i}
	\end{bmatrix}$, and $g_{T^{2}}$ is simple parabolic. In fact, if we set $N=\begin{bmatrix}
		\boldsymbol{i}&1\\
		1&-\boldsymbol{i}
	\end{bmatrix}$, then $T^{n}=\boldsymbol{i}^{n}(I_{2}+nN)$. Hence, $g_{T^{2n-1}}$ are compound parabolic and $g_{T^{2n}}$ are simple parabolic for integers $n\geq 1$.
\end{example}

\begin{example}
	Let $T=\frac{1}{4}\begin{bmatrix}
		5\boldsymbol{i}&3\boldsymbol{i}\\
		3\boldsymbol{i}&5\boldsymbol{i}
	\end{bmatrix}$. Then $T\in U(1,1;\mathbb{H})$ and $g_{T}$ is compound loxodromic. $T^2=-\frac{1}{8}\begin{bmatrix}
		17&15\\
		15&17
	\end{bmatrix}$, and $g_{T^{2}}$ is simple loxodromic.
\end{example}

\begin{example}
	Let $\zeta=\frac{1+\boldsymbol{i}}{\sqrt{2}}$ and $T=\begin{bmatrix}
		\zeta&0\\
		0&-\zeta
	\end{bmatrix}$. Then $T\in U(1,1;\mathbb{H})$ and $g_{T}$ is compound elliptic. $T^2=\begin{bmatrix}
		\boldsymbol{i}&0\\
		0&-\boldsymbol{i}
	\end{bmatrix}$, and $g_{T^{2}}$ is nontrivial simple elliptic.
\end{example}

\begin{example}
	Let $T=\begin{bmatrix}
		1&0\\
		0&\boldsymbol{i}
	\end{bmatrix}$. Then $T\in U(1,1;\mathbb{H})$ and $g_{T}$ is compound elliptic. $T^4=\begin{bmatrix}
		1&0\\
		0&1
	\end{bmatrix}=I_2$ and $g_{T^4}$ is trivial simple elliptic.
\end{example}
In what follows, we carry out a complete classification of the types of the M\"{o}bius transformations associated with $T^{n}$. For convenience, we write $\delta_{n}=\triangle(T^{n}),\ s_{n}=Tr(T^n)$.

\begin{theorem}[Classification of $g_{T^n}$]
	Let \(T\in U(1,1;\mathbb H)\), and let \(n\geq 1\) be an integer.
	Write
	\[
	T^n=
	\begin{bmatrix}
		a_n & b_n\\
		c_n & d_n
	\end{bmatrix},
	\qquad
	\delta_n=\Delta(T^n),
	\qquad
	s_n=\operatorname{Tr}(T^n).
	\]
	Then the type of \(g_{T^n}\) is completely determined as follows.
	\begin{enumerate}
		\item $g_{T^n}$ is compound elliptic if and only if
		\[
		\delta_n<0.
		\]
		
		\item $g_{T^n}$ is compound loxodromic if and only if
		\[
		\delta_n>0.
		\]
		
		\item $g_{T^n}$ is compound parabolic if and only if
		\[
		\delta_n=0
		\quad\text{and}\quad
		b_n\neq\overline{c_n}.
		\]
		
		\item $g_{T^n}$ is simple elliptic if and only if
		\[
		\delta_n=0
		\]
		and either
		\[
		b_n=\overline{c_n}=0,
		\]
		or
		\[
		b_n=\overline{c_n}\neq 0
		\quad\text{and}\quad
		s_n^2<16.
		\]
		
		\item $g_{T^n}$ is simple parabolic if and only if
		\[
		\delta_n=0,\qquad
		b_n=\overline{c_n}\neq 0,
		\qquad
		s_n^2=16.
		\]
		
		\item $g_{T^n}$ is simple loxodromic if and only if
		\[
		\delta_n=0,\qquad
		b_n=\overline{c_n}\neq 0,
		\qquad
		s_n^2>16.
		\]
	\end{enumerate}
	
\end{theorem}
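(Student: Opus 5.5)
The plan is to reduce the statement to the six-type classification of Cao et al.\ recalled in Section~1, applied to the single matrix $T^n$. Since $T^n\in U(1,1;\mathbb H)$, the Möbius map $g_{T^n}$ falls into exactly one of the six types according to that list, with $a,b,c,d$ replaced by $a_n,b_n,c_n,d_n$. What has to be done is to rewrite each case of that list in terms of $\delta_n=\Delta(T^n)$, the relation between $b_n$ and $\overline{c_n}$, and $s_n$. Because the six conditions in the statement are mutually exclusive and cover all possibilities, the ``if'' directions will follow once each ``only if'' is established.

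First I treat the branch $b_n\neq\overline{c_n}$. Here the original classification uses the sign of the quantity $\triangle$ from (\ref{1.2}). The extension $\Delta(T)=|b-\overline c|^2-(a_0-d_0)^2$ in \cite{YHF} is precisely the one agreeing in sign with (\ref{1.2}) in this branch; I would take this from \cite{YHF}, or else verify directly from the relations (\ref{1.1}) that both expressions have the same sign. This gives compound elliptic, compound parabolic or compound loxodromic according as $\delta_n<0$, $\delta_n=0$ or $\delta_n>0$.

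Next I treat the branch $b_n=\overline{c_n}$. Then $\delta_n=-((a_n)_0-(d_n)_0)^2\le 0$. I would show that $(a_n)_0=(d_n)_0$ in this branch, so that $\delta_n=0$. If $b_n=\overline{c_n}\neq0$, this follows from the condition $\overline{a}b=\overline c d$ in (\ref{1.1}): it gives $\overline{a_n}b_n=b_nd_n$, so $d_n=b_n^{-1}\overline{a_n}b_n$ is similar to $\overline{a_n}$, and therefore $\operatorname{Re}(d_n)=\operatorname{Re}(a_n)$. Consequently $s_n=4(d_n)_0$, and the conditions $(d_n)_0^2<1$, $=1$, $>1$ translate to $s_n^2<16$, $=16$, $>16$, which give simple elliptic, simple parabolic and simple loxodromic respectively. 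If $b_n=c_n=0$, the classification makes $g_{T^n}$ elliptic, simple when $(a_n)_0=(d_n)_0$ and compound otherwise. In that case $\delta_n=-((a_n)_0-(d_n)_0)^2$, so $\delta_n=0$ gives simple elliptic and $\delta_n<0$ gives compound elliptic, consistent with item~(1).

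Finally I assemble the cases. Compound elliptic arises exactly when $\delta_n<0$, since in the branch $b_n=\overline{c_n}\neq0$ we always have $\delta_n=0$. Compound loxodromic arises exactly when $\delta_n>0$, which forces $b_n\neq\overline{c_n}$. The remaining cases all have $\delta_n=0$ and are separated by the stated extra conditions. As a remark, I would note that Theorem~\ref{34} and Proposition~\ref{35} compute these data from $q_n$, but they are not needed for the logic of the proof. The main obstacle is the sign agreement between (\ref{1.2}) and $\Delta$ when $b\neq\overline c$ and $b=0$, where (\ref{1.2}) is undefined. I would handle this by citing \cite{YHF}, whose extension was designed for exactly this purpose, and check it against the eigenvalue description in Lemma~\ref{3} if necessary.
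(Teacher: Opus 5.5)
Your proposal is correct and follows the paper's proof essentially step for step. You use the same three-way split into $b_n\neq\overline{c_n}$, $b_n=\overline{c_n}=0$ and $b_n=\overline{c_n}\neq0$, and in the last case you use a relation from (\ref{1.1}) to get $\operatorname{Re}(a_n)=\operatorname{Re}(d_n)$, hence $\delta_n=0$ and $s_n=4\operatorname{Re}(d_n)$; you take $\overline{a_n}b_n=\overline{c_n}d_n$ where the paper takes $a_n\overline{c_n}=b_n\overline{d_n}$, which makes no difference. The obstacle you flag at the end is vacuous: $|b_n|=|c_n|$ by (\ref{1.1}), so $b_n=0$ forces $c_n=0$ and hence $b_n=\overline{c_n}$. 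Thus (\ref{1.2}) is always defined in the branch $b_n\neq\overline{c_n}$, and, exactly as in the paper, all you need there is the sign agreement with $\Delta$ taken from \cite{YHF}.
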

\begin{proof}
	Fix an arbitrary integer \(n\geq 1\), and write
	\[
	T^n=
	\begin{bmatrix}
		a_n & b_n\\
		c_n & d_n
	\end{bmatrix}.
	\]
	
	Recall that
	\[
	\delta_n=\Delta(T^n)
	=
	\left|b_n-\overline{c_n}\right|^2
	-
	\left(\operatorname{Re}(a_n)
	-\operatorname{Re}(d_n)\right)^2
	\]
	and
	\[
	s_n=\operatorname{Tr}(T^n)
	=
	2\left(\operatorname{Re}(a_n)
	+\operatorname{Re}(d_n)\right).
	\]
	We distinguish the following three cases.
	
	\medskip
	\noindent
	\textbf{Case 1:}
	$b_n\neq\overline{c_n}$.
	
	By the classification recalled in Section~1, the type of
	$g_{T^n}$ is determined by the sign of $\delta_n$. More precisely,
	\[
	\begin{aligned}
		\delta_n<0
		&\iff g_{T^n}\text{ is compound elliptic},\\
		\delta_n=0
		&\iff g_{T^n}\text{ is compound parabolic},\\
		\delta_n>0
		&\iff g_{T^n}\text{ is compound loxodromic}.
	\end{aligned}
	\]
	This proves assertions \textup{(1)--(3)} in the case
	$b_n\neq\overline{c_n}$.
	
	\medskip
	\noindent
	\textbf{Case 2:}
	$b_n=\overline{c_n}=0$.
	
	In this case,
	\[
	\delta_n
	=
	-\left(\operatorname{Re}(a_n)
	-\operatorname{Re}(d_n)\right)^2
	\leq 0.
	\]
	If $\delta_n<0$, then
	\[
	\operatorname{Re}(a_n)\neq\operatorname{Re}(d_n),
	\]
	and the classification in Section~1 shows that $g_{T^n}$ is
	compound elliptic.
	
	If $\delta_n=0$, then
	\[
	\operatorname{Re}(a_n)=\operatorname{Re}(d_n),
	\]
	and hence $g_{T^n}$ is simple elliptic. This proves the first
	alternative in assertion \textup{(4)}.
	
	\medskip
	\noindent
	\textbf{Case 3:}
	$b_n=\overline{c_n}\neq0$.
	
	Since $T^n\in U(1,1;\mathbb H)$, we have
	\[
	a_n\overline{c_n}=b_n\overline{d_n}.
	\]
	Using $b_n=\overline{c_n}\neq0$, this gives
	\[
	a_nb_n=b_n\overline{d_n},
	\]
	and therefore
	\[
	b_n^{-1}a_nb_n=\overline{d_n}.
	\]
	Similar quaternions have the same real part, so
	\[
	\operatorname{Re}(a_n)
	=
	\operatorname{Re}(\overline{d_n})
	=
	\operatorname{Re}(d_n).
	\]
	Consequently,
	\[
	\delta_n=0
	\]
	and
	\[
	s_n
	=
	2\left(\operatorname{Re}(a_n)
	+\operatorname{Re}(d_n)\right)
	=
	4\operatorname{Re}(d_n).
	\]
	It follows that
	\[
	s_n^2=16\left(\operatorname{Re}(d_n)\right)^2.
	\]
	Hence
	\[
	\begin{aligned}
		s_n^2<16
		&\iff \left(\operatorname{Re}(d_n)\right)^2<1,\\
		s_n^2=16
		&\iff \left(\operatorname{Re}(d_n)\right)^2=1,\\
		s_n^2>16
		&\iff \left(\operatorname{Re}(d_n)\right)^2>1.
	\end{aligned}
	\]
	By the classification in Section~1, these three alternatives are
	equivalent, respectively, to $g_{T^n}$ being simple elliptic,
	simple parabolic, and simple loxodromic. This proves assertions
	\textup{(4)--(6)}.
	
	Since \(n\geq 1\) was arbitrary, the classification holds for every
	positive integer \(n\). The above discussion covers all cases in the classification. Therefore, the six conditions in the statement give
	a complete classification of $g_{T^n}$.
\end{proof}

We present code for classifying $g_{T^n}$, available at \url{https://github.com/HeJiaxing-hjx/U-1-1-H-_classification.git}.

\section*{Declarations}

\noindent \textbf{Ethics approval}

\noindent Not applicable.

\noindent \textbf{Conflict of interest/Competing interests}

\noindent The authors declare that there is no conflict of interest or competing interest.

\noindent \textbf{Funding}

\noindent This research received no external funding.

\noindent \textbf{Authors' contributions}

\noindent All authors contributed equally to this work.

\noindent \textbf{Availability of data and materials}

\noindent Our code is available at \url{https://github.com/HeJiaxing-hjx/U-1-1-H-_classification.git}. 

\bibliographystyle{plain}
\bibliography{reference}

\end{document}